\newcommand{\lyxdeleted}[3]{{\texorpdfstring{\color{lyxdeleted}\sout{#3}}{}}}
\numberwithin{equation}{section}
\numberwithin{figure}{section}
\newlength{\lyxlistindent}      
\theoremstyle{plain}
\newtheorem{thm}{\protect\theoremname}
  \theoremstyle{definition}
  \newtheorem{defn}[thm]{\protect\definitionname}
  \theoremstyle{plain}
  \newtheorem{lem}[thm]{\protect\lemmaname}
  \theoremstyle{remark}
  \newtheorem{rem}[thm]{\protect\remarkname}
  \theoremstyle{definition}
  \newtheorem{example}[thm]{\protect\examplename}
  \theoremstyle{plain}
  \newtheorem{cor}[thm]{\protect\corollaryname}
\newcommand{\xyR}[1]{ \makeatletter
\xydef@\xymatrixrowsep@{#1} \makeatother} 
\newcommand{\xyC}[1]{ \makeatletter
\xydef@\xymatrixcolsep@{#1} \makeatother} 
\newcommand{\eps}{\varepsilon} 
\renewcommand{\phi}{\varphi} 
\newcommand{\field}[1]{\mathbb{#1}}
\newcommand{\R}{\field{R}}                        
\newcommand{\N}{\field{N}}                        
\newcommand{\diff}[1]{\,{\rm d}#1}
\newcommand{\Cc}{\mathcal{C}} 
\newcommand{\Coo}{\Cc^\infty}
\newcommand{\D}{\mathcal{D}} 
\newcommand{\then}{\quad \Longrightarrow \quad}
\newcommand{\DIff}{ \quad\;\; :\!\iff \quad } 
\newcommand{\gss}{{\mathcal G}^{\text{\rm s}}}
\newcommand{\gse}{{\mathcal G}^{\text{\rm e}}}
\newcommand{\gsd}{{\mathcal G}^{\text{\rm d}}}
\newcommand{\cG}{{\mathcal{G}}}
\newcommand{\erm}{\text{\rm e}}
\newcommand{\srm}{\text{\rm s}}
\newcommand{\drm}{\text{\rm d}}
\newcommand{\brm}{\text{\rm b}}
\newcommand{\mbR}{\mathbb{R}}
\newcommand{\ep}{\varepsilon}
\newcommand{\Rtil}{\widetilde \R}
\newcommand{\supp}{\mbox{supp}}
\newcommand{\gs}{{\mathcal G}}
\newcommand{\otilc}{\widetilde \Omega_c}
  \providecommand{\corollaryname}{Corollary}
  \providecommand{\definitionname}{Definition}
  \providecommand{\examplename}{Example}
  \providecommand{\lemmaname}{Lemma}
  \providecommand{\remarkname}{Remark}
\providecommand{\theoremname}{Theorem}
  \providecommand{\corollaryname}{Corollary}
  \providecommand{\definitionname}{Definition}
  \providecommand{\examplename}{Example}
  \providecommand{\lemmaname}{Lemma}
  \providecommand{\remarkname}{Remark}
\providecommand{\theoremname}{Theorem}
\begin{document}

\title{Unifying order structures for Colombeau algebras}

\author{Paolo Giordano \and Eduard A. Nigsch}

\thanks{P.\ Giordano has been supported by grant P25116-N25 of the Austrian
Science Fund FWF.}

\address{\textsc{Faculty of Mathematics, University of Vienna, Austria, Oskar-Morgenstern-Platz
1, 1090 Wien, Austria}}

\thanks{E.\ Nigsch has been supported by the Austrian Science Fund (FWF)
projects P23714 and P25064. }

\email{\texttt{paolo.giordano@univie.ac.at, eduard.nigsch@univie.ac.at}}

\subjclass[2000]{46F30}

\keywords{Colombeau algebra, set of indices, Landau big-O }
\begin{abstract}
We define a general notion of \emph{set of indices} which, using concepts
from pre-ordered sets theory, permits to unify the presentation of
several Colombeau-type algebras of nonlinear generalized functions.
In every set of indices it is possible to generalize Landau's notion
of big-O such that its usual properties continue to hold. Using this
generalized notion of big-O, these algebras can be formally defined
the same way as the special Colombeau algebra. Finally, we examine
the scope of this formalism and show its effectiveness by applying
it to the proof of the pointwise characterization in Colombeau algebras.
\end{abstract}
\maketitle

\section{Introduction}

Colombeau algebras are algebras of generalized functions introduced
by J.-F.\ Colombeau in order to rigorously define multiplication
and other nonlinear operations on Schwartz distributions in a consistent
way. Containing the space of Schwartz distributions as a linear subspace
and the algebra of smooth functions as a faithful subalgebra, they
permit to bypass the Schwartz impossibility result. We refer to \cite{Col84,Col85,Col92,GKOS,MO92}
for detailed information; our terminology and notation mainly follows
\cite{GKOS}. Besides Colombeau's original algebra, the full algebra
$\gse$ and the special algebra $\gss$ on open subsets of $\R^{n}$
appeared (\cite{Col84,Col85,Col92}) and some years later the diffeomorphism
invariant local algebra $\gsd$ (\cite{GKOS}) was constructed.

A parallel thread, using nonstandard Analysis (NSA) methods, arrived
at a similar algebra $\hat{\mathcal{G}}$, (called \emph{algebra of
asymptotic functions}, see e.g.\ \cite{ToVe08} and references therein)
that has better formal properties: the scalars of the algebra form
an algebraically closed Cantor complete field, it is defined using
a reduced number of quantifiers, and for it a Hahn-Banach extension
principle holds (\cite{ToVe08}).

Because there are many variants of Colombeau algebras in use today,
it is desirable to gain a better understanding of their common structure
as well as their distinguishing properties. In the present work, we
will examine in which way suitable notions from the theory of pre-ordered
sets permit to unify the formal presentation of some of these algebras.
In particular, we will introduce the notion of \emph{set of indices},
which allows for a generalization of Landau's asymptotic relations
preserving their formal properties. Using these new generalized asymptotic
relations, we will reformulate the definitions of the algebras $\gss$,
$\hat{\mathcal{G}}$, $\gse$ and $\gsd$ mentioned above using the
same reduced number of quantifiers of the special one.

We start by introducing new notations for the mollifier operator $S_{\eps}$
and for the translation operator $T_{x}$ (cf.\ \cite[Section 2.3.2]{GKOS})
in order to emphasize that they are group actions on the space $\D(\mbR^{n})$
of test functions on $\mbR^{n}$. We include zero in the natural numbers
$\N=\{0,1,2,\ldots\}$.
\begin{defn}
\label{def:operators}For $\phi\in\D(\R^{n})$, $r\in\R_{>0}$ and
$x\in\R^{n}$ we define 
\begin{enumerate}[%
leftmargin=*,label=(\roman*),align=left %
]
\item \foreignlanguage{british}{$r\odot\phi:y\in\R^{n}\mapsto S_{r}\phi(y):=\frac{1}{r^{n}}\cdot\phi\left(\frac{y}{r}\right)\in\R$,} 
\item $x\oplus\phi:y\in\R^{n}\mapsto T_{x}\phi(y):=\phi(y-x)\in\R$.
\end{enumerate}
\end{defn}
\noindent It is easy to prove that $\odot$ is an action of the multiplicative
group $(\R_{>0},\cdot,1)$ on $\D(\R^{n})$ and $\oplus$ is an action
of the additive group $(\R^{n},+,0)$ on $\D(\R^{n})$. Moreover,
$r\odot(x\oplus\phi)=rx\oplus r\odot\phi$ for $r\in\R_{>0}$, $x\in\R^{n}$
and $\varphi\in\D(\R^{n})$. The following lemma will be used later.
\begin{lem}
\label{lem:identityForOperators} Let $\phi\in\D(\R^{n})\setminus\{0\}$,
$r\in\R_{>0}$ and $x\in\R^{n}$, then the actions $\odot$ and $\oplus$
are free, i.e.: 
\begin{enumerate}[%
leftmargin=*,label=(\roman*),align=left %
]
\item \foreignlanguage{british}{\label{enu:indetOdot}$r\odot\phi=\phi$
if and only if $r=1$} 
\item \label{enu:identOplus}$x\oplus\phi=\phi$ if and only if $x=0$
\end{enumerate}
\end{lem}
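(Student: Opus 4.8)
The plan is to dispose of the two ``if'' directions at once and then concentrate on the ``only if'' implications, whose content is precisely that the actions are free. Since $\odot$ and $\oplus$ are actions of the groups $(\R_{>0},\cdot,1)$ and $(\R^{n},+,0)$, with $1$ and $0$ as respective identities, the implications $r=1\Rightarrow r\odot\phi=\phi$ and $x=0\Rightarrow x\oplus\phi=\phi$ hold by the very definition of a group action. For both converses the single fact I would exploit is that a test function $\phi\in\D(\R^{n})$ has compact support; writing $K:=\supp(\phi)$, the hypothesis $\phi\neq 0$ ensures $K\neq\emptyset$, and from the definitions I would record the elementary support identities $\supp(r\odot\phi)=r\cdot K$ and $\supp(x\oplus\phi)=x+K$.

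For \ref{enu:identOplus} I would argue by contradiction. Assuming $x\oplus\phi=\phi$ with $x\neq 0$, iterating the action gives $\phi(y)=\phi(y-kx)$ for every $y\in\R^{n}$ and every $k\in\N$. Choosing $y$ with $\phi(y)\neq 0$ (possible since $\phi\neq 0$) then forces $y-kx\in K$ for all $k$, while $|y-kx|\to\infty$ as $k\to\infty$, contradicting the boundedness of $K$. Equivalently and more compactly, $x\oplus\phi=\phi$ yields $x+K=K$, so the nonempty bounded set $K$ would be invariant under the unbounded orbit $\{kx:k\in\ZZ\}$, which is impossible unless $x=0$.

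For \ref{enu:indetOdot} I would use the support identity to reduce the claim to scaling a compact set onto itself. From $r\odot\phi=\phi$ we obtain $r\cdot K=K$. Replacing $r$ by $1/r$ if necessary (legitimate because $\odot$ is a group action, so $r\odot\phi=\phi$ is equivalent to $(1/r)\odot\phi=\phi$), I may assume $r>1$. Let $M:=\max_{p\in K}|p|$, which exists and is attained because $K$ is compact and nonempty. For every $p\in K$ we have $rp\in r\cdot K=K$, hence $r|p|\leq M$; taking $p$ with $|p|=M$ gives $rM\leq M$, which forces $M=0$. Thus $K\subseteq\{0\}$, and continuity of $\phi$ then yields $\phi\equiv 0$, contradicting $\phi\neq 0$; hence $r=1$.

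Both cases become routine once the compact-support hypothesis is brought in, and I do not expect a serious obstacle. The only point requiring slightly more care is the scaling case \ref{enu:indetOdot}, where the reduction to $r>1$ together with the extremal-norm argument replaces the simpler ``unbounded orbit'' reasoning that already settles the translation case \ref{enu:identOplus}.
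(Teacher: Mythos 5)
Your proposal is correct, and it splits into a part that matches the paper and a part that is genuinely different. For the scaling claim (i) you essentially reproduce the paper's argument: both proofs pass to the support identity ($r\cdot\supp(\phi)=\supp(\phi)$, the paper writes it as $\frac{1}{r}\cdot\supp(\phi)=\supp(\phi)$), pick a point of maximal norm in the compact nonempty support, and compare norms; your reduction to $r>1$ via $r\odot\phi=\phi\iff(1/r)\odot\phi=\phi$ is just a repackaging of the paper's symmetric step ``$\frac{1}{r}\le1$ and, by the same reasoning, $r\le1$'', and both dispose of the degenerate case $\supp(\phi)=\{0\}$ by continuity. For the translation claim (ii), however, your route differs from the paper's and is more elementary: you iterate $\phi(y)=\phi(y-kx)$ to produce an unbounded orbit $\{y-kx\}_{k\in\N}$ inside the bounded support (equivalently, $K=x+K$ forces invariance under $\{kx\mid k\in\ZZ\}$), whereas the paper intersects the support with the ray $L=\{y_{0}-tx\mid t\ge0\}$, takes a point of the compact set $K=L\cap\supp(\phi)$ at maximal distance from $y_{0}$, and derives the contradiction $t|x|\ge(t+1)|x|$. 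Your orbit argument is shorter and avoids the compactness and extremal-point bookkeeping; the paper's version uses the translation identity only once and structurally mirrors its extremal-norm proof of (i). Note that your method would also transfer back to (i): iterating $\phi=r^{k}\odot\phi$ gives $r^{k}p\in\supp(\phi)$ for any $p$ in the support, again an unbounded orbit when $r>1$, so the extremal step there is replaceable too. No gaps: the support identities $\supp(r\odot\phi)=r\cdot\supp(\phi)$ and $\supp(x\oplus\phi)=x+\supp(\phi)$ are immediate from the definitions, and the ``if'' directions do follow from the group-action axioms recorded just before the lemma.
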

\begin{proof}
\ref{enu:indetOdot} $\Rightarrow$: The equality $r\odot\phi=\phi$
means $\frac{1}{r^{n}}\phi\left(\frac{x}{r}\right)=\phi(x)$ for each
$x\in\R^{n}$, which directly implies 
\begin{equation}
\frac{1}{r}\cdot\text{supp}(\phi)=\text{supp}(\phi).\label{eq:1Over_r_supp}
\end{equation}
 The support set $\text{supp}(\phi)$ is closed, bounded and non empty
because $\phi\ne0$. Take $x\in\text{supp}(\phi)$ such that $|x|$
is maximum, from \eqref{eq:1Over_r_supp} we get $\frac{x}{r}\in\text{supp}(\phi)$
and hence 
\begin{equation}
\left|\frac{x}{r}\right|=\frac{1}{r}|x|\le|x|.\label{eq:xOver_rle_x}
\end{equation}
 $|x|=0$ would imply $\text{supp}(\phi)=\{0\}$, which is impossible
since $\phi$ is continuous. Therefore, \eqref{eq:xOver_rle_x} implies
$\frac{1}{r}\le1$. But $r\odot\phi=\phi$ implies $\phi=\frac{1}{r}\odot\phi$,
hence, with the same reasoning, we also get $r\le1$, from which the
conclusion follows.

\noindent \ref{enu:identOplus} $\Rightarrow$: The equality $x\oplus\phi=\phi$
means $\phi(\cdot-x)=\phi(\cdot)$ and hence 
\begin{equation}
\text{supp}(\phi)-x=\text{supp}(\phi).\label{eq:supp-x}
\end{equation}
 There exists $y_{0}\in\text{supp}(\phi)$, and $L:=\left\{ y_{0}-tx\mid t\ge0\right\} $
is closed, so $K:=L\cap\text{supp}(\phi)$ is compact. Therefore,
there exists a point $y\in K$ where the distance $|y-y_{0}|$ is
maximum. We can write $y=y_{0}-tx$ for some $t\ge0$ because $y\in L$.
By \eqref{eq:supp-x} we also get $y-x\in\text{supp}(\phi)$. But
$y-x=y_{0}-(t+1)x\in L$, so $y-x\in K$ and thus $|y-y_{0}|\ge|y-x-y_{0}|$,
i.e.\ $t|x|\ge(t+1)|x|$, which implies $|x|=0$.
\end{proof}

\section{Set of indices}

The formulation of Colombeau algebras always involves asymptotic estimates.
The basic idea of the following definitions is to clarify and abstract
these asymptotics and thus to unify the notations and the logical
structure of Colombeau algebras.
\begin{defn}
\label{def:setOfIndices}We say that $\mathbb{I}=(I,\le,\mathcal{I})$
is a \emph{set of indices} if the following conditions hold: 
\begin{enumerate}[%
leftmargin=*,label=(\roman*),align=left %
]
\item \foreignlanguage{british}{\label{enu:DefSoI-preorder}$(I,\le)$
is a pre-ordered set, i.e., $I$ is a non empty set with a reflexive
and transitive relation $\le$.} 
\item \label{enu:DefSoI-union}$\mathcal{I}$ is a set of subsets of $I$
such that $\emptyset\notin\mathcal{I}$ and $I\in\mathcal{I}$.
\item \label{enu:DefSoI-intersection}$\forall A,B\in\mathcal{I}\,\exists C\in\mathcal{I}:\ C\subseteq A\cap B$.
\end{enumerate}

\noindent For all $e\in I$, set $(\emptyset,e]:=\left\{ \eps\in I\mid\eps\le e\right\} $.
As usual, we say $\eps<e$ if $\eps\le e$ and $\eps\ne e$. Using
these notations, we state the last condition in the definition of
set of indices:
\begin{enumerate}[leftmargin={*},label=(\roman*),align=left,start=4]
\item \label{enu:DefSoI-DonwDir}If $e\le a\in A\in\mathcal{I}$, the set
$A_{\le e}:=(\emptyset,e]\cap A$ is downward directed by $<$, i.e.,
it is non empty and 
\begin{equation}
\forall b,c\in A_{\le e}\,\exists d\in A_{\le e}:\ d<b\ ,\ d<c.\label{eq:strictlyDownDirected}
\end{equation}
 
\end{enumerate}
\end{defn}
\begin{rem}
\label{rem:firstConsequencesDefSoI}\ 
\begin{enumerate}[%
leftmargin=*,label=(\roman*),align=left %
]
\item Conditions \ref{enu:DefSoI-union} and \ref{enu:DefSoI-intersection}
can be summarized saying that $\mathcal{I}$ is a filter base on $I$
which contains $I$.
\item Let us note explicitly that in \ref{enu:DefSoI-DonwDir} it is not
required that $e\in A$. In Thm. \ref{thm:usualBigOh-prop} \ref{enu:1stBigOh-subsetA}
we will motivate this choice.
\item Since $I\in\mathcal{I}$, condition \ref{enu:DefSoI-DonwDir} yields
that $(\emptyset,e]$ is downward directed by $<$ for each $e\in I$. 
\item In the set of indices that we will define for the full algebra $\gs^{\erm}(\Omega)$
(see Def.\ \ref{def:setOfIndicesFullAlgebra} below), we will see
that in general $(\emptyset,e]$ is not an element of $\mathcal{I}$.
In the same example, we have that in general $A\in\mathcal{I}$ is
not downward directed. 
\end{enumerate}
\end{rem}
In order to illustrate this definition we will give some examples.
\begin{example}
\label{exa:stdAndNS}\ 
\begin{enumerate}[%
leftmargin=*,label=(\roman*),align=left %
]
\item \foreignlanguage{british}{\label{enu:I^s}The simplest example of
set of indices is given by $I^{\srm}:=(0,1]\subseteq\R$, the relation
$\le$ is the usual order relation on $\R$, and $\mathcal{I}^{\srm}:=\left\{ (0,\eps_{0}]\mid\eps_{0}\in I\right\} $.
We denote by $\mathbb{I}^{\srm}:=(I^{\srm},\le,\mathcal{I}^{\srm})$
this set of indices which, as we will see, is the one used for the
special algebra $\gss$. }
\item Let $\mathcal{I}$ be an ultrafilter on $\N$ containing the Fréchet
filter (\cite{Gol98}). Let $\ge$ be the usual order relation on
the natural numbers. Then $(\N,\ge,\mathcal{I})$ is a set of indices
which can also be used for the formulation of the special algebra. 
\item \label{enu:Ihat}In the context of \cite{ToVe08}, we set $\hat{I}:=\D_{0}=\D(\R^{d})$.
The pre-order relation is defined by $\phi\le\psi$ iff $\underline{\phi}\le\underline{\psi}$,
where $\underline{\phi}:=\text{diam}\left(\text{supp}(\phi)\right)$
(the diameter of the support of $\varphi$) if $\phi\ne0$ and $\underline{\phi}:=1$
otherwise. Note that this is only a pre-order and not an order relation.
$\hat{\mathcal{I}}$ is the free ultrafilter on $\D_{0}$ employed
in \cite{ToVe08}, and we set $\hat{\mathbb{I}}:=(\hat{I},\le,\hat{\mathcal{I}}$).
In the following Thm.\ \ref{thm:IhatIsSetOfIndices} it is proved
that this is actually a set of indices. 
\end{enumerate}
\end{example}
\begin{thm}
\label{thm:IhatIsSetOfIndices}$\hat{\mathbb{I}}$ is a set of indices.\end{thm}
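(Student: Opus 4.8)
The plan is to verify the four conditions of Definition~\ref{def:setOfIndices} for $\hat{\mathbb{I}}=(\hat{I},\le,\hat{\mathcal{I}})$ in turn, where $\hat{I}=\D(\R^{d})$, the pre-order is $\phi\le\psi$ iff $\underline{\phi}\le\underline{\psi}$, and $\hat{\mathcal{I}}$ is a free ultrafilter on $\D_0$. First, condition~\ref{enu:DefSoI-preorder}: reflexivity and transitivity of $\le$ on $\hat{I}$ follow immediately from reflexivity and transitivity of the usual order on $\R$ applied to the real numbers $\underline{\phi}=\text{diam}(\text{supp}(\phi))$. The set $\hat{I}$ is clearly non-empty, so this step is routine.

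Next, condition~\ref{enu:DefSoI-union}: since $\hat{\mathcal{I}}$ is an ultrafilter, by definition $\emptyset\notin\hat{\mathcal{I}}$ and $\hat{I}\in\hat{\mathcal{I}}$, giving the requirement directly. Condition~\ref{enu:DefSoI-intersection} is likewise immediate from the filter property: for $A,B\in\hat{\mathcal{I}}$ we have $A\cap B\in\hat{\mathcal{I}}$, so we may simply take $C:=A\cap B$, which satisfies $C\subseteq A\cap B$. (As noted in Remark~\ref{rem:firstConsequencesDefSoI}, conditions~\ref{enu:DefSoI-union} and~\ref{enu:DefSoI-intersection} together just say $\hat{\mathcal{I}}$ is a filter base containing $\hat{I}$, which any free ultrafilter on $\D_0$ satisfies.)

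The substantive step is condition~\ref{enu:DefSoI-DonwDir}: given $e\le a\in A\in\hat{\mathcal{I}}$, I must show $A_{\le e}=(\emptyset,e]\cap A$ is non-empty and downward directed by the strict relation $<$. I expect \emph{this to be the main obstacle}, because it is the only place where the specific geometric meaning of the pre-order (via support diameters) and the failure of antisymmetry both come into play. Non-emptiness should follow by exhibiting some $\phi\in A$ with $\underline{\phi}\le\underline{e}$; here I would use that $A$, being a member of a free ultrafilter on $\D_0$, is infinite and contains test functions of arbitrarily small support. For the directedness~\eqref{eq:strictlyDownDirected}, given $b,c\in A_{\le e}$ I need $d\in A_{\le e}$ with $\underline{d}<\underline{b}$ and $\underline{d}<\underline{c}$ simultaneously. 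The key point is that a strict decrease in diameter must be produced while staying inside $A$: I would argue that, since membership in $A$ depends on $\phi$ only up to the ultrafilter and the diameter can be scaled down freely using the operator $\odot$ from Definition~\ref{def:operators} (which contracts supports by $r\odot\phi$ having support $r\cdot\text{supp}(\phi)$), there are abundantly many test functions in $A$ with support diameter strictly below $\min(\underline{b},\underline{c})$.

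To carry this out cleanly, I would first record the elementary fact that $\underline{r\odot\phi}=r\cdot\underline{\phi}$ for $r\in\R_{>0}$, so that $r\odot\phi<\phi$ whenever $r<1$ and $\phi\ne 0$; this gives an explicit mechanism for strictly lowering the diameter. The remaining care is combinatorial: one must ensure the strictly smaller element can be chosen \emph{inside} $A$ rather than merely inside $\D_0$. Since a free ultrafilter contains no finite sets, the collection of elements of $\hat{I}$ with diameter below any fixed positive threshold is large enough that its intersection with $A$ is non-empty — and indeed, exploiting that $A$ is itself an ultrafilter element, one can show such small-diameter members of $A$ exist. Assembling non-emptiness and the existence of a common strict lower bound $d$ completes~\ref{enu:DefSoI-DonwDir}, and with the three easy conditions already checked, $\hat{\mathbb{I}}$ is a set of indices.
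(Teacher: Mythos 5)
Your verification of conditions \ref{enu:DefSoI-preorder}--\ref{enu:DefSoI-intersection} is fine and matches the paper, which dismisses these as clear. The genuine gap is in condition \ref{enu:DefSoI-DonwDir}, exactly at the step you flag as ``the remaining care'': you attempt to deduce, from freeness of the ultrafilter alone, that every $A\in\hat{\mathcal{I}}$ contains test functions of arbitrarily small support diameter. That is false for a generic free ultrafilter on $\D_0$. The set $B:=\{\phi\in\D_0\mid\underline{\phi}\ge 1\}$ and its complement are both infinite, so some free ultrafilter contains $B$; taking $A=B$ and any $e$ with $\underline{e}<1$ (such an $e$ satisfies $e\le a$ for every $a\in B$), the set $A_{\le e}$ is empty and \ref{enu:DefSoI-DonwDir} fails outright. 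Freeness only guarantees that cofinite sets belong to the filter, and $\{\phi\in\D_0\mid\underline{\phi}<r\}$ is never cofinite, so ``$A$ is an infinite ultrafilter element'' gives no intersection with the small-diameter functions. Your scaling mechanism does not repair this: $r\odot b$ indeed has diameter $r\cdot\underline{b}$, but nothing forces $r\odot b\in A$, since $\hat{\mathcal{I}}$ is not invariant under the action $\odot$.

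What the theorem actually needs --- and what the paper's proof uses --- is that $\hat{\mathcal{I}}$ is not an arbitrary free ultrafilter but the specific one constructed in \cite{ToVe08}: by Def.~2.1 and Thm.~2.3 there, $\hat{\mathcal{I}}$ contains a distinguished sequence of sets $(\D_n)_{n\in\N}$ with $\underline{\phi}\le\frac{1}{n}$ for all $\phi\in\D_n$, i.e.\ property \eqref{eq:D_n-R_phi}. With this the argument closes immediately: given $e\le a\in A\in\hat{\mathcal{I}}$ and $b,c\in A_{\le e}$, choose $n$ with $\frac{1}{n}<\min\left(\underline{b},\underline{c}\right)\le\underline{e}$; then $\D_n\cap A\in\hat{\mathcal{I}}$ by the filter property, hence is non-empty, and any $d\in\D_n\cap A$ satisfies $\underline{d}\le\frac{1}{n}<\min\left(\underline{b},\underline{c}\right)$, so $d<b$, $d<c$ and $d\in A_{\le e}$; the same argument with $\frac{1}{n}\le\underline{e}$ gives non-emptiness of $A_{\le e}$. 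So your outline correctly isolates the one substantive condition, but the tool you reach for (freeness plus a vague largeness claim) cannot establish it; the hypothesis that the directing sets $\D_n$ lie in the ultrafilter is indispensable, and without citing it your proof of \ref{enu:DefSoI-DonwDir} does not go through.
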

\begin{proof}
Properties \ref{enu:DefSoI-preorder}, \ref{enu:DefSoI-union} and
\ref{enu:DefSoI-intersection} are clear. Def. 2.1 and Thm. 2.3 in
\cite{ToVe08} imply that we have a sequence $\left(\D_{n}\right)_{n\in\N}$
of $\hat{\mathcal{I}}$ such that 
\begin{equation}
\forall n\in\N_{>0}\,\forall\phi\in\D_{n}:\ \underline{\phi}\le\frac{1}{n}.\label{eq:D_n-R_phi}
\end{equation}
 So, if $e\le a\in A\in\hat{\mathcal{I}}$ and $\phi$, $\psi\in A_{\le e}$,
then $0<\underline{\phi}\le\underline{e}$ and $0<\underline{\psi}\le\underline{e}$.
Therefore there exists $n\in\N_{>0}$ such that $\frac{1}{n}<\min\left(\underline{\phi},\underline{\psi}\right)\le\underline{e}$.
Since $\D_{n}\in\mathcal{I}$, also $\D_{n}\cap A\in\hat{\mathcal{I}}$
from \ref{enu:DefSoI-intersection} of Def. \ref{def:setOfIndices}.
But $\hat{\mathcal{I}}$ is an ultrafilter, so there exists $d\in\D_{n}\cap A$.
Applying \eqref{eq:D_n-R_phi} with $\phi=d$ we obtain $\underline{d}\le\frac{1}{n}<\min\left(\underline{\phi},\underline{\psi}\right)\le\underline{e}$
which is the conclusion.
\end{proof}

\subsection{Two notions of big-O in a set of indices}

In each set of indices, we can define two notions of big-O that formally
behave in the usual way.

Since each set of the form $A_{\le a}=(\emptyset,a]\cap A$ is downward
directed, the first big-O is the usual one: 
\begin{defn}
\label{def:usualBigOh}Let $\mathbb{I}=(I,\le,\mathcal{I})$ be a
set of indices. Let $a\in A\in\mathcal{I}$ and $(x_{\eps})$, $(y_{\eps})\in\R^{I}$
be two nets of real numbers defined in $I$. We write 
\begin{equation}
x_{\eps}=O_{a,A}(y_{\eps})\ \text{as }\eps\in\mathbb{I}\label{eq:usualBigOh}
\end{equation}
 if 
\begin{equation}
\exists H\in\R_{>0}\,\exists\eps_{0}\in A_{\le a}\,\forall\eps\in A_{\le\eps_{0}}:\ |x_{\eps}|\le H\cdot|y_{\eps}|.\label{eq:1stBigOhDef}
\end{equation}
We explicitly note that the variable $\eps$ in \eqref{eq:usualBigOh}
is actually a mute variable. As usual (see e.g.\ \cite{Bou}), the
notation $x_{\eps}=O_{a,A}(y_{\eps})$ really represents a pre-order
relation, and the use of the equality sign is an abuse of language.
From this point of view, a Vinogradov notation like $x_{\eps}\ll_{a,A}y_{\eps}$
would surely be better. Another innocuous abuse of language is the
use of the symbol $\le$ for the pre-order relation on $I$ and for
the order relation on the reals used in the last part of \eqref{eq:1stBigOhDef}.\end{defn}
\begin{example}
\label{exa:usualBigOh}\ 
\begin{enumerate}[%
leftmargin=*,label=(\roman*),align=left %
]
\item \foreignlanguage{british}{In the set of indices $\mathbb{I}^{\srm}$
of the special algebra (see \ref{enu:I^s} of Ex. \ref{exa:stdAndNS}),
the following are equivalent:}

\begin{enumerate}
\item $\forall A\in\mathcal{I}^{\srm}\,\forall a\in A:\ x_{\eps}=O_{a,A}(y_{\eps})$
as $\eps\in\mathbb{I}^{\srm}$ (or any other combination of quantifiers
$\exists A\,\exists a$, $\forall A\,\exists a$, $\exists A\,\forall a$) 
\item $x_{\eps}=O(y_{\eps})$ as $\eps\to0^{+}$ 
\end{enumerate}
\item In the set of indices $\hat{\mathbb{I}}$ of the algebra of asymptotic
functions $\hat{\gs}$ (\ref{enu:Ihat} of Ex. \ref{exa:stdAndNS}),
the following are equivalent:

\begin{enumerate}
\item \label{enu:usualBigOh1}$\exists A\in\mathcal{\hat{I}}\,\forall a\in A:\ x_{\eps}=O_{a,A}(y_{\eps})$
as $\eps\in\hat{\mathbb{I}}$ 
\item \label{enu:usualBigOh2}$\exists H\in\R_{\ne0}:\ |x_{\phi}|\le H\cdot|y_{\phi}|$
almost everywhere, where a property $\mathcal{P}(\phi)$ is said to
hold \emph{almost everywhere} iff $\left\{ \phi\in\D_{0}\mid\mathcal{P}(\phi)\right\} \in\mathcal{I}$
(see \cite{ToVe08}). 
\end{enumerate}

\noindent To prove this equivalence, we need the following
\begin{lem}
\noindent \label{lem:NSA-allIntervals}In the set of indices $\hat{\mathbb{I}}$,
we have $(\emptyset,\eps_{o}]\in\hat{\mathcal{I}}$ for all $\eps_{0}\in\hat{I}$.\end{lem}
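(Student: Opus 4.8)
The plan is to show that the lower set $(\emptyset,\eps_0]=\{\phi\in\hat I\mid\phi\le\eps_0\}$ belongs to the ultrafilter $\hat{\mathcal I}$ by exhibiting it as a superset of a set already known to lie in $\hat{\mathcal I}$. The key resource is the sequence $(\D_n)_{n\in\N}$ from \eqref{eq:D_n-R_phi}, whose members satisfy $\underline{\phi}\le\frac1n$ for all $\phi\in\D_n$. Since $\hat{\mathcal I}$ is a filter, membership is upward closed under inclusion, so it suffices to find a single $\D_n$ contained in $(\emptyset,\eps_0]$.

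First I would observe that the claim is trivial when $\eps_0=0$: by the convention $\underline{0}=1$, the value $\underline{\eps_0}=1$ is the maximum of $\underline{\,\cdot\,}$ on $\hat I$, so $\phi\le\eps_0$ holds for every $\phi$, whence $(\emptyset,\eps_0]=\hat I\in\hat{\mathcal I}$ by Def.\ \ref{def:setOfIndices}\ref{enu:DefSoI-union}. So I would assume $\eps_0\ne0$, which gives $\underline{\eps_0}=\text{diam}(\text{supp}(\eps_0))>0$ since the support of a nonzero continuous function has positive diameter. Now choose $n\in\N_{>0}$ large enough that $\frac1n\le\underline{\eps_0}$. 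For any $\phi\in\D_n$, \eqref{eq:D_n-R_phi} yields $\underline{\phi}\le\frac1n\le\underline{\eps_0}$, which is exactly $\phi\le\eps_0$ in the pre-order of $\hat I$. Hence $\D_n\subseteq(\emptyset,\eps_0]$.

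Finally, since $\D_n\in\hat{\mathcal I}$ and $\hat{\mathcal I}$ is an ultrafilter (in particular a filter, closed under taking supersets within $I$), the inclusion $\D_n\subseteq(\emptyset,\eps_0]\subseteq\hat I$ forces $(\emptyset,\eps_0]\in\hat{\mathcal I}$, completing the proof.

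There is essentially no hard obstacle here: the whole argument rests on the quantitative control \eqref{eq:D_n-R_phi} over the diameters in $\D_n$, which was already extracted in the proof of Thm.\ \ref{thm:IhatIsSetOfIndices} from \cite{ToVe08}. The only point requiring a moment's care is the degenerate case $\eps_0=0$, where one must invoke the convention $\underline{0}=1$ rather than the diameter formula, and the remark that $\text{diam}(\text{supp}(\eps_0))>0$ for $\eps_0\ne0$ because a nonzero test function cannot have single-point support---exactly the continuity observation used in the proof of Lem.\ \ref{lem:identityForOperators}.
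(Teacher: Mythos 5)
Your proof is correct and takes essentially the same route as the paper's: pick $n$ with $\tfrac{1}{n}\le\underline{\eps_{0}}$, use \eqref{eq:D_n-R_phi} to get $\D_{n}\subseteq(\emptyset,\eps_{0}]$, and conclude by upward closure of the filter $\hat{\mathcal{I}}$. Your separate case $\eps_{0}=0$ is harmless but not needed, since the convention $\underline{0}=1$ already guarantees $\underline{\eps_{0}}>0$ for every $\eps_{0}\in\hat{I}$, which is all the paper's uniform argument uses.
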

\begin{proof}
\noindent Since $\underline{\eps_{0}}>0$, for $n\in\N_{\ne0}$ sufficiently
big we have $\frac{1}{n}\le\underline{\eps_{0}}$. From \eqref{eq:D_n-R_phi}
we thus have $\underline{\phi}\le\frac{1}{n}\le\underline{\eps_{0}}$
for each $\phi\in\D_{n}$. Therefore $\D_{n}\subseteq(\emptyset,\eps_{0}]$.
But $\D_{n}\in\hat{\mathcal{I}}$ and $\hat{\mathcal{I}}$ is an ultrafilter,
so also $(\emptyset,\eps_{0}]\in\hat{\mathcal{I}}$.
\end{proof}

\noindent Now, we prove that the previous \ref{enu:usualBigOh1} and
\ref{enu:usualBigOh2} are equivalent.

\ref{enu:usualBigOh1} $\Rightarrow$ \ref{enu:usualBigOh2}: Property
\ref{enu:usualBigOh1} yields 
\begin{equation}
\forall a\in A\,\exists H\in\R_{>0}\,\exists\eps_{0}\in A_{\le a}\,\forall\eps\in A_{\le\eps_{0}}:\ |x_{\eps}|\le H\cdot|y_{\eps}|.\label{eq:1InhatI}
\end{equation}
But there always exists $a\in A$ because $A$ is an ultrafilter set,
so $|x_{\eps}|\le H\cdot|y_{\eps}|$ for all $\eps\in A_{\le\eps_{0}}$.
Lem. \ref{lem:NSA-allIntervals} yields $(\emptyset,\eps_{0}]\in\hat{\mathcal{I}}$
and hence $A_{\le\eps_{0}}=(\emptyset,\eps_{0}]\cap A\in\hat{\mathcal{I}}$.
Hence, we can say that 
\[
\left\{ \eps\in\hat{I}\mid|x_{\eps}|\le H\cdot|y_{\eps}|\right\} \supseteq A_{\le\eps_{0}}\in\hat{\mathcal{I}}.
\]
 The conclusion follows because $\hat{\mathcal{I}}$ is an ultrafilter.

\ref{enu:usualBigOh2} $\Rightarrow$ \ref{enu:usualBigOh1}: Property
\ref{enu:usualBigOh2} means $A:=\left\{ \eps\in\hat{I}\mid|x_{\eps}|\le H\cdot|y_{\eps}|\right\} \in\hat{\mathcal{I}}$.
For each $a\in A$ we set $\eps_{0}:=a$ so that \eqref{eq:1InhatI}
follows by definition of $A$.

\end{enumerate}
\end{example}
\begin{thm}
\label{thm:usualBigOh-prop}Let $\mathbb{I}=(I,\le,\mathcal{I})$
be a set of indices, $e\in A\in\mathcal{I}$ and $(x_{\eps})$, $(y_{\eps})$,
$(z_{\eps})\in\R^{I}$, then, as $\eps\in\mathbb{I}$, the following
properties of $O_{a,A}$ hold:
\begin{enumerate}[%
leftmargin=*,label=(\roman*),align=left %
]
\item \foreignlanguage{british}{\label{enu:1stBigOh-rifl}$x_{\eps}=O_{a,A}(x_{\eps})$}
\item \label{enu:1stBigOh-trans}$x_{\eps}=O_{a,A}(y_{\eps})$ and $y_{\eps}=O_{a,A}(z_{\eps})$,
then $x_{\eps}=O_{a,A}(z_{\eps})$ 
\item \label{enu:1stBigOh-prod}$O_{a,A}(x_{\eps})\cdot O_{a,A}(y_{\eps})=O_{a,A}(x_{\eps}\cdot y_{\eps})$ 
\item \label{enu:1stBigOh-sum}$O_{a,A}(x_{\eps})+O_{a,A}(y_{\eps})=O_{a,A}\left(\left|x_{\eps}\right|+\left|y_{\eps}\right|\right)$ 
\item \label{enu:1stBigOh-prodExt}$x_{\ep}\cdot O_{a,A}(y_{\eps})=O_{a,A}(x_{\eps}\cdot y_{\eps})$ 
\item \label{enu:1stBigOh-sumEqualSummand}$O_{a,A}(x_{\eps})+O_{a,A}(x_{\eps})=O_{a,A}(x_{\eps})$ 
\item \label{enu:1stBigOh-sumExt}If $x_{\eps},y_{\eps}\ge0$ for all $\eps\in I$,
then $x_{\eps}+O_{a,A}(y_{\eps})=O_{a,A}(x_{\eps}+y_{\eps})$ 
\item \label{enu:1stBigOh-prodScal1}$\forall k\in\R:\ O_{a,A}(k\cdot x_{\eps})=O_{a,A}(x_{\eps})$ 
\item \label{enu:1stBigOh-prodScal2}$\forall k\in\R:\ k\cdot O_{a,A}(x_{\eps})=O_{a,A}(x_{\eps})$ 
\item \label{enu:1stBigOh-subsetA}If $x_{\eps}=O_{a,A}(y_{\eps})$ and
$a\in B\subseteq A$, where $B\in\mathcal{I}$, then $x_{\eps}=O_{a,B}(y_{\eps})$
\end{enumerate}
\end{thm}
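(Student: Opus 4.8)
The plan is to isolate a single \emph{common-threshold} principle that does all the work for the routine items, and then to treat \ref{enu:1stBigOh-subsetA} separately, since it is the only part that genuinely exploits the full strength of axiom \ref{enu:DefSoI-DonwDir}. The principle is this: whenever finitely many estimates $|u^{(j)}_\eps|\le H_j|v^{(j)}_\eps|$ hold for $\eps\in A_{\le\eps_j}$ with each $\eps_j\in A_{\le a}$, there is a single $\eps_0\in A_{\le a}$ on which all of them hold simultaneously. This follows directly from the downward directedness of $A_{\le a}$ by $<$, which is available because $a\le a\in A\in\mathcal{I}$: repeatedly applying \eqref{eq:strictlyDownDirected} produces $\eps_0\in A_{\le a}$ with $\eps_0<\eps_j$ for every $j$, and transitivity of $\le$ then gives $A_{\le\eps_0}\subseteq\bigcap_j A_{\le\eps_j}$, so every estimate is valid on $A_{\le\eps_0}$.

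With this principle the items \ref{enu:1stBigOh-rifl}--\ref{enu:1stBigOh-prodScal2} become mechanical, once one reads each $O$-equation under the standard anonymous-net convention (an equation $L=R$ asserts that every net assembled as on the left lies in the class on the right). For \ref{enu:1stBigOh-rifl} take $H=1$ and $\eps_0=a\in A_{\le a}$. For \ref{enu:1stBigOh-trans}, \ref{enu:1stBigOh-prod} and \ref{enu:1stBigOh-prodExt} I would pull the two defining estimates back to a common $A_{\le\eps_0}$ and combine the constants multiplicatively; for \ref{enu:1stBigOh-sum}, \ref{enu:1stBigOh-sumEqualSummand} and \ref{enu:1stBigOh-sumExt} I would instead combine additively or by a maximum, using the triangle inequality. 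In \ref{enu:1stBigOh-sumExt} the hypotheses $x_\eps,y_\eps\ge0$ let one replace the bound $x_\eps+H|y_\eps|$ by $\max(1,H)\,(x_\eps+y_\eps)=\max(1,H)\,|x_\eps+y_\eps|$. The scalar items \ref{enu:1stBigOh-prodScal1} and \ref{enu:1stBigOh-prodScal2} follow by absorbing $|k|$ into $H$; the degenerate case $k=0$ stays consistent with the convention because the identically zero net satisfies every $O_{a,A}(\cdot)$ estimate trivially.

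The item \ref{enu:1stBigOh-subsetA} is where I expect the only real difficulty, and I would present it last. Suppose $x_\eps=O_{a,A}(y_\eps)$, witnessed by some $H$ and some $\eps_0\in A_{\le a}$. To conclude $x_\eps=O_{a,B}(y_\eps)$ I need a threshold lying in $B_{\le a}$ rather than merely in $A_{\le a}$, and the obstruction is that the given $\eps_0$ need not belong to $B$. The resolution is precisely the subtlety flagged in Remark \ref{rem:firstConsequencesDefSoI}: axiom \ref{enu:DefSoI-DonwDir} does \emph{not} require the apex $e$ to lie in the set. Since $\eps_0\le a\in B\in\mathcal{I}$, the set $B_{\le\eps_0}=(\emptyset,\eps_0]\cap B$ is downward directed, hence nonempty even though $\eps_0$ itself may fail to be in $B$. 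Choosing any $\eps_0'\in B_{\le\eps_0}$ gives $\eps_0'\in B$ with $\eps_0'\le\eps_0\le a$, so $\eps_0'\in B_{\le a}$, and transitivity yields $B_{\le\eps_0'}\subseteq A_{\le\eps_0}$. The same constant $H$ then works on $B_{\le\eps_0'}$, establishing $x_\eps=O_{a,B}(y_\eps)$. I would stress at this point that this argument is exactly what motivates dropping the condition $e\in A$ from \ref{enu:DefSoI-DonwDir}: requiring $\eps_0\in B$ there would break the passage from $A$ to the smaller set $B$.
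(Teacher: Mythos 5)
Your proof is correct and takes essentially the same route as the paper: your common-threshold principle is precisely the downward-directedness step (Def.~\ref{def:setOfIndices}~\ref{enu:DefSoI-DonwDir} applied with $e=a$) that the paper uses in its model proof of \ref{enu:1stBigOh-prod}, with the remaining items \ref{enu:1stBigOh-rifl}--\ref{enu:1stBigOh-prodScal2} dispatched by the same routine combinations of constants. Your argument for \ref{enu:1stBigOh-subsetA} --- exploiting that the axiom does not require the apex $\eps_{0}$ to lie in $B$, so that $B_{\le\eps_{0}}$ is directed and hence nonempty, then passing to $\eps_{0}'\in B_{\le\eps_{0}}$ --- is exactly the paper's proof, including the observation that this is what motivates the formulation of \ref{enu:DefSoI-DonwDir}.
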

\begin{proof}
All properties \ref{enu:1stBigOh-rifl} - \ref{enu:1stBigOh-prodScal2}
have a similar schema of proof. Therefore, we give as example the
proof of \ref{enu:1stBigOh-prod}. As it is customary, this has to
be read as 
\[
x'_{\eps}=O_{a,A}(x_{\eps})\ ,\ y'_{\eps}=O_{a,A}(y_{\eps})\ \Rightarrow\ x'_{\eps}\cdot y'_{\eps}=O_{a,A}(x_{\eps}\cdot y_{\eps}).
\]
The assumptions of this implication yield the existence of $H$, $K$,
$\eps_{0}$, $\eps_{1}$ such that 
\begin{align}
H & >0\ ,\ \eps_{0}\in A_{\le a}\ ,\ \forall\eps\in A_{\le\eps_{0}}:\ |x'_{\eps}|\le H\cdot|x_{\eps}|\label{eq:1stFact}\\
K & >0\ ,\ \eps_{1}\in A_{\le a}\ ,\ \forall\eps\in A_{\le\eps_{1}}:\ |y'_{\eps}|\le K\cdot|y_{\eps}|.\label{eq:2ndFact}
\end{align}
Thus, there exists $\eps_{2}\in A_{\le a}$ such that $\eps_{2}<\eps_{0},\eps_{1}$,
and for each $\eps\in A_{\le\eps_{2}}$, \eqref{eq:1stFact} and \eqref{eq:2ndFact}
imply the conclusion $|x'_{\eps}\cdot y'_{\eps}|\le H\cdot K\cdot|x_{\eps}\cdot y_{\eps}|$.

In proving \ref{enu:1stBigOh-subsetA}, we need to use a peculiar
part of Def.\ \ref{def:setOfIndices}\ \ref{enu:DefSoI-DonwDir}.
Assume that \eqref{eq:1stBigOhDef} holds and $a\in B\subseteq A$,
with $B\in\mathcal{I}$. Then we have $\eps_{0}\le a\in B\in\mathcal{I}$
(note that not necessarily $\eps_{0}\in B$). By Def.\ \ref{def:setOfIndices}\ \ref{enu:DefSoI-DonwDir}
the set $B_{\le\eps_{0}}$ is directed, so it is non empty. Let $\eps_{1}\in B_{\le\eps_{0}}$.
Then for each $\eps\in B_{\le\eps_{1}}$ we have $\eps\in A_{\le\eps_{0}}$
and the conclusion follows.
\end{proof}
Frequently, claims involving Landau big-O asymptotic relations $x_{\eps}=O(y_{\eps})$
are proved by contradiction. A method frequently used in this type
of proofs concerns the existence of a decreasing sequence $(\eps_{k})_{k\in\N}$
which tends to zero and along which the net $(x_{\eps})$ is not bounded
by $(y_{\eps})$. We want to show that this method holds with great
generality in every set of indices. We start by defining in general
what the sentence ``a sequence tends to the empty set in the directed
set $A_{\le a}$'' means:
\begin{defn}
\label{def:sequenceTendsToEmptyset}\foreignlanguage{english}{Let
$\mathbb{I}=(I,\le,\mathcal{I})$ be a set of indices. Let $a\in A\in\mathcal{I}$
and $(z_{k})_{k\in\N}$ be a sequence in $A_{\le a}$. Then we say
that 
\[
(z_{k})_{k\in\N}\to\emptyset\text{ in }A_{\le a}
\]
 if 
\begin{equation}
\forall\eps_{0}\in A_{\le a}\,\exists K\in\N\,\forall k\in\N_{\ge K}:\ z_{k}<\eps_{0}.\label{eq:seqTendsToEmptysetDef}
\end{equation}
}\end{defn}
\begin{example}
\label{exa:seqTendToEmptyset}\ 
\begin{enumerate}[%
leftmargin=*,label=(\roman*),align=left %
]
\item \foreignlanguage{british}{In the set of indices $\mathbb{I}^{\srm}$,
we have that $(z_{k})_{k\in\N}\to\emptyset$ in $A_{\le a}$ if and
only if $\lim_{k\to+\infty}z_{k}=0^{+}$. }
\item In the set of indices $\hat{\mathbb{I}}$, we have that $(z_{k})_{k\in\N}\to\emptyset$
in $A_{\le a}$ if and only if $\lim_{k\to+\infty}\underline{z_{k}}=0^{+}$.
In fact, if $r\in\R_{>0}$, take $n\in\N_{\ne0}$ such that 
\begin{equation}
\frac{1}{n}<\min(\underline{a},r).\label{eq:1overn}
\end{equation}
 But $\D_{n}\cap A$ is non empty because it is an ultrafilter set
in $\hat{\mathcal{I}}$. Thus, there exists $\eps_{0}\in\D_{n}\cap A$
and $\eps_{0}\le a$ from \eqref{eq:1overn}. From \eqref{eq:seqTendsToEmptysetDef}
we get $z_{k}<\eps_{0}$ for $k$ sufficiently big, and thus $\underline{z_{k}}\le\underline{\eps_{0}}<r$.
This proves that necessarily $\left(\underline{z_{k}}\right)_{k\in\N}\to0^{+}$.
Vice versa, if $\lim_{k\to+\infty}z_{k}=0^{+}$, then for $k$ sufficiently
big we have $\underline{z_{k}}<\underline{\eps_{0}}$, i.e.\ $z_{k}<\eps_{0}$. 
\end{enumerate}
\end{example}
\begin{lem}
\label{lem:fromSeqToDecresSeq}In the hypothesis of Def. \ref{def:sequenceTendsToEmptyset},
assume that $(z_{k})_{k\in\N}\to\emptyset$ in $A_{\le a}$ and that
\begin{equation}
\forall b,c\in A_{\le a}:\ b<c\text{ or }c\le b.\label{eq:total}
\end{equation}
Then there exists a strictly decreasing subsequence $(z_{\sigma_{k}})_{k\in\N}$
of $A_{\le a}$ which tends to $\emptyset$ in $A_{\le a}$.\end{lem}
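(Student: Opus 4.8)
The plan is to build the index map $\sigma\colon\N\to\N$ by recursion, at each stage using the \emph{current} term of the sequence as the threshold in the definition \eqref{eq:seqTendsToEmptysetDef} of convergence to $\emptyset$. The point is that every $z_k$ lies in $A_{\le a}$, so $z_{\sigma_j}$ is itself an admissible $\eps_0$; feeding it into \eqref{eq:seqTendsToEmptysetDef} produces a whole tail of the sequence lying strictly below $z_{\sigma_j}$, which simultaneously supplies a larger index and a strictly smaller value, exactly the two features a strictly decreasing subsequence needs. The total order hypothesis \eqref{eq:total} then guarantees that any two of the chosen terms are $\le$-comparable, so that $(z_{\sigma_k})$ really is a chain and the adjective \emph{strictly decreasing} is meaningful for it.

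Concretely, I would set $\sigma_0:=0$ and, assuming $\sigma_0<\dots<\sigma_j$ with $z_{\sigma_0}>\dots>z_{\sigma_j}$ already chosen, apply \eqref{eq:seqTendsToEmptysetDef} with $\eps_0:=z_{\sigma_j}\in A_{\le a}$ to obtain $K\in\N$ with $z_k<z_{\sigma_j}$ for all $k\ge K$. The key observation is that necessarily $K>\sigma_j$: otherwise $\sigma_j\ge K$ would force $z_{\sigma_j}<z_{\sigma_j}$, which is false since $<$ includes the clause $z_{\sigma_j}\ne z_{\sigma_j}$. Hence I may set $\sigma_{j+1}:=K$, which keeps $\sigma$ strictly increasing and yields $z_{\sigma_{j+1}}<z_{\sigma_j}$. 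This produces a genuine subsequence $(z_{\sigma_k})_{k\in\N}$ of $A_{\le a}$ that is strictly decreasing.

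It then remains to check that $(z_{\sigma_k})_{k\in\N}\to\emptyset$ in $A_{\le a}$, and here the usual fact that a subsequence inherits convergence applies verbatim. Given any $\eps_0\in A_{\le a}$, convergence of $(z_k)$ furnishes $K$ with $z_k<\eps_0$ for all $k\ge K$; since $\sigma$ is strictly increasing we have $\sigma_k\ge k$, so $\sigma_k\ge K$ for every $k\ge K$ and therefore $z_{\sigma_k}<\eps_0$. Thus \eqref{eq:seqTendsToEmptysetDef} holds for the subsequence as well, completing the argument.

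The only delicate point is the inductive step, that is, ensuring at each stage the existence of a \emph{later} index carrying a \emph{strictly smaller} value. The device of taking $\eps_0:=z_{\sigma_j}$ in \eqref{eq:seqTendsToEmptysetDef}, together with irreflexivity of $<$ forcing $K>\sigma_j$, settles both the strict increase of the indices and the strict decrease of the values in one stroke; everything else is routine. I expect no further obstacle, the total order \eqref{eq:total} being needed only to make comparability of all the extracted terms, and hence the statement itself, meaningful.
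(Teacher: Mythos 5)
Your proof is correct, but it takes a genuinely different route from the paper's. The paper defines $\sigma_{j+1}$ as the \emph{minimal} index $k$ with $z_{k}<z_{\sigma_{j}}$; with that choice, convergence of the subsequence is not automatic, so the paper invokes the totality hypothesis \eqref{eq:total} to establish the auxiliary monotonicity property $z_{\sigma_{n}}\le z_{k}$ whenever $k<\sigma_{n}$, which is then what makes the final tail estimate $z_{\sigma_{n}}\le z_{K}<\eps_{0}$ go through. You instead take $\sigma_{j+1}$ to be the tail threshold $K$ furnished by \eqref{eq:seqTendsToEmptysetDef} with $\eps_{0}:=z_{\sigma_{j}}$, after correctly observing that irreflexivity of $<$ forces any such witness $K$ to exceed $\sigma_{j}$; convergence of the subsequence then follows from $\sigma_{k}\ge k$ alone, by the standard inheritance argument. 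A notable consequence is that your argument never uses \eqref{eq:total} at all: it proves the lemma in an arbitrary $A_{\le a}$, with ``strictly decreasing'' read as $z_{\sigma_{k+1}}<z_{\sigma_{k}}$ for all $k$ --- which is also all the paper's construction delivers, since in a mere pre-order $<$ need not be transitive. So your proof is both shorter and slightly more general. The one inaccuracy is your closing remark: \eqref{eq:total} is not needed to make the statement meaningful (consecutive strict decrease makes sense in any pre-ordered set); it is needed only for the paper's particular minimal-index construction. A final cosmetic point: by taking the minimal admissible threshold $K$ at each stage you could dispense with countable choice, exactly as the paper does with its minima.
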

\begin{proof}
Taking $\eps_{0}=z_{0}$ in \eqref{eq:seqTendsToEmptysetDef} we get
the existence of $\sigma_{0}:=\min\left\{ k\in\N\mid z_{k}<z_{0}\right\} $,
and $z_{\sigma_{0}}<z_{0}$. If $k<\sigma_{0}$, $z_{k}<z_{\sigma_{0}}$
cannot hold and hence $z_{\sigma_{0}}\le z_{k}$ by \eqref{eq:total}.
Setting $\eps_{0}=z_{\sigma_{0}}$ in \eqref{eq:seqTendsToEmptysetDef}
we obtain the existence of $\sigma_{1}:=\min\left\{ k\in\N\mid z_{k}<z_{\sigma_{0}}\right\} $.
As before, $z_{\sigma_{1}}<z_{\sigma_{0}}$, $\sigma_{1}>\sigma_{0}$
and $z_{\sigma_{1}}\le z_{k}$ if $k<\sigma_{1}$. Continuing in this
way we can define a strictly increasing sequence $(\sigma_{k})_{k}$
such that $\left(z_{\sigma_{k}}\right)_{k}$ is strictly decreasing.
Moreover, $z_{\sigma_{n}}\le z_{k}$ whenever $\sigma_{n}>k$. This
subsequence tends to $\emptyset$ because if $\eps_{0}\in A_{\le a}$
and $z_{K}<\eps_{0}$, then for $n$ sufficiently big $\sigma_{n}>K$
and hence $z_{\sigma_{n}}\le z_{K}<\eps_{0}$.\end{proof}
\begin{thm}
\label{thm:neg-1stBigOh}\foreignlanguage{english}{Let $\mathbb{I}=(I,\le,\mathcal{I})$
be a set of indices. Let $a\in A\in\mathcal{I}$ and $(x_{\eps})$,
$(y_{\eps})\in\R^{I}$ be two nets of real numbers defined in $I$.
Assume also that $(z_{k})_{k\in\N}$ is a sequence of $A_{\le a}$
such that $(z_{k})_{k\in\N}\to\emptyset$ in $A_{\le a}$. Then the
following are equivalent:}
\begin{enumerate}[%
leftmargin=*,label=(\roman*),align=left %
]
\item \foreignlanguage{british}{\label{enu:neg-1stBigOh1}$\neg\left[x_{\eps}=O_{a,A}(y_{\eps})\text{ as }\eps\in\mathbb{I}\right]$ }
\item \label{enu:neg-1stBigOh2}For each $H\in\R_{>0}$ there exists a sequence
$(\eps_{k})_{k\in\N}$ of $A_{\le a}$ such that:

\begin{enumerate}[%
leftmargin=*,align=left %
]
\item \foreignlanguage{british}{$(\eps_{k})_{k\in\N}\to\emptyset$ }in
$A_{\le a}$
\item \label{enu:2aneg-1stBigOh}$\forall k\in\N:\ \left|x_{\eps_{k}}\right|>H\cdot\left|y_{\eps_{k}}\right|$ 
\end{enumerate}
\end{enumerate}
\end{thm}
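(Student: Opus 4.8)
The plan is to prove the two implications separately. The direction \ref{enu:neg-1stBigOh2} $\Rightarrow$ \ref{enu:neg-1stBigOh1} is routine and I would dispatch it first; the substantive content lies in \ref{enu:neg-1stBigOh1} $\Rightarrow$ \ref{enu:neg-1stBigOh2}, where the hypothesis that some sequence tends to $\emptyset$ in $A_{\le a}$ is used to manufacture, for every $H$, a witnessing sequence along which the bound fails.

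For \ref{enu:neg-1stBigOh2} $\Rightarrow$ \ref{enu:neg-1stBigOh1}, I would first unwind the negation of \eqref{eq:1stBigOhDef}, namely
\[
\forall H\in\R_{>0}\,\forall\eps_{0}\in A_{\le a}\,\exists\eps\in A_{\le\eps_{0}}:\ |x_{\eps}|>H\cdot|y_{\eps}|.
\]
Fixing $H$ and $\eps_{0}$, I take the sequence $(\eps_{k})_{k\in\N}$ granted by \ref{enu:neg-1stBigOh2} for this $H$. Since $(\eps_{k})_{k\in\N}\to\emptyset$ in $A_{\le a}$, testing \eqref{eq:seqTendsToEmptysetDef} against $\eps_{0}$ yields some $K$ with $\eps_{K}<\eps_{0}$; in particular $\eps_{K}\le\eps_{0}$ and $\eps_{K}\in A$, so $\eps_{K}\in A_{\le\eps_{0}}$, while $|x_{\eps_{K}}|>H\cdot|y_{\eps_{K}}|$ by \ref{enu:2aneg-1stBigOh}. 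Thus $\eps:=\eps_{K}$ is the desired witness. I note that this direction only passes from a strict inequality to a non-strict one, so no order-theoretic subtlety arises.

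For \ref{enu:neg-1stBigOh1} $\Rightarrow$ \ref{enu:neg-1stBigOh2}, fix $H\in\R_{>0}$. Since each $z_{k}$ lies in $A_{\le a}$, I would apply the negated big-O \ref{enu:neg-1stBigOh1} with the test element $\eps_{0}:=z_{k}$ to select, for every $k$, some $\eps_{k}\in A_{\le z_{k}}$ with $|x_{\eps_{k}}|>H\cdot|y_{\eps_{k}}|$. This gives property \ref{enu:2aneg-1stBigOh} at once, and from $\eps_{k}\le z_{k}\le a$ together with $\eps_{k}\in A$ one gets $\eps_{k}\in A_{\le a}$, so $(\eps_{k})_{k\in\N}$ is a sequence in $A_{\le a}$. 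It then remains to verify that $(\eps_{k})_{k\in\N}\to\emptyset$ in $A_{\le a}$: given $\eps_{0}\in A_{\le a}$, the hypothesis $(z_{k})_{k\in\N}\to\emptyset$ supplies $K$ with $z_{k}<\eps_{0}$ for all $k\ge K$, and since $\eps_{k}\le z_{k}$ one wants to conclude $\eps_{k}<\eps_{0}$.

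The main obstacle is precisely this last step. In a genuine pre-order the implication ``$\eps_{k}\le z_{k}$ and $z_{k}<\eps_{0}$ $\Rightarrow$ $\eps_{k}<\eps_{0}$'' is not automatic, because distinct indices may be equivalent and so the strict relation $<$ need not be transitive across $\le$; this is a real phenomenon in $\hat{\mathbb{I}}$, where $\le$ is only a pre-order induced by the diameter $\underline{\cdot}$. I expect to handle it by invoking the downward-directedness axiom \ref{enu:DefSoI-DonwDir} of Def.~\ref{def:setOfIndices}: rather than testing $(z_{k})\to\emptyset$ directly against $\eps_{0}$, I would first use directedness of $A_{\le a}$ to interpose a strict lower bound of $\eps_{0}$ and exploit the room below $z_{k}$ to force $\eps_{k}$ strictly under $\eps_{0}$, so that the strict inequality propagates. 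This is exactly the feature that the computation of $\to\emptyset$ in $\hat{\mathbb{I}}$ in Example~\ref{exa:seqTendToEmptyset} makes transparent through the real-valued $\underline{\cdot}$, and it is the one place where the full order structure of a set of indices, as opposed to that of an arbitrary filtered pre-order, is genuinely needed.
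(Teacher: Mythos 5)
Your direction \ref{enu:neg-1stBigOh2} $\Rightarrow$ \ref{enu:neg-1stBigOh1} is correct and is, up to phrasing it directly rather than by contradiction, the paper's argument; your recursive construction for \ref{enu:neg-1stBigOh1} $\Rightarrow$ \ref{enu:neg-1stBigOh2}, instantiating the negation of \eqref{eq:1stBigOhDef} at $\bar{\eps}_0:=z_k$ to pick $\eps_k\in A_{\le z_k}$ with $\left|x_{\eps_k}\right|>H\cdot\left|y_{\eps_k}\right|$, is also exactly the paper's (which, unlike you, explicitly notes the use of countable choice). The divergence is at the very step you flag: the paper simply asserts that $\eps_k\le z_k$ and $(z_k)_{k\in\N}\to\emptyset$ imply $(\eps_k)_{k\in\N}\to\emptyset$, whereas you correctly observe that in a pre-order, where $<$ means ``$\le$ and $\ne$'', this is not automatic. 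The gap in your proposal is that the repair you sketch does not close it. Interposing, via Def.~\ref{def:setOfIndices}~\ref{enu:DefSoI-DonwDir}, some $d\in A_{\le a}$ with $d<\eps_0$ and waiting until $z_k<d$ only yields $\eps_k\le z_k\le d\le\eps_0$; the failure mode is $\eps_k=\eps_0$, and this remains consistent, since it merely forces $\eps_0\le d\le\eps_0$, i.e.\ $d$ equivalent to but distinct from $\eps_0$ --- a configuration that axiom \ref{enu:DefSoI-DonwDir} does not exclude (in $\hat{\mathbb{I}}$, any two distinct test functions with equal support diameter form such a pair). Iterating the interposition changes nothing.

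In fact, no argument from Def.~\ref{def:setOfIndices} alone can complete this step, because the implication \ref{enu:neg-1stBigOh1} $\Rightarrow$ \ref{enu:neg-1stBigOh2} fails for a general set of indices: take $I$ countably infinite with the chaotic pre-order $\eps\le e$ for all $\eps,e\in I$ and $\mathcal{I}:=\{I\}$; all axioms hold (for \ref{enu:DefSoI-DonwDir} choose any $d\notin\{b,c\}$), and any injective sequence tends to $\emptyset$ in $A_{\le a}=I$, so the theorem's standing hypothesis is satisfied. Fix $\eps^{*}\in I$ and set $x_{\eps^{*}}:=1$, $y_{\eps^{*}}:=0$, and $x_{\eps}:=0$, $y_{\eps}:=1$ for $\eps\ne\eps^{*}$. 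Then \ref{enu:neg-1stBigOh1} holds, since every $A_{\le\bar{\eps}_0}=I$ contains $\eps^{*}$; but the only index satisfying $|x_{\eps}|>H\cdot|y_{\eps}|$ is $\eps^{*}$, so the only candidate in \ref{enu:neg-1stBigOh2} is the constant sequence $\eps_k=\eps^{*}$, which does not tend to $\emptyset$ (test $\eps_0=\eps^{*}$ in \eqref{eq:seqTendsToEmptysetDef}). So the step you doubted is a genuine gap --- present, silently, in the paper's own proof as well --- and it is bridged in the intended cases only by structure beyond the axioms: in $\mathbb{I}^{\srm}$ and $\mathbb{I}^{\erm}$ the relation $\le$ is antisymmetric (Thm.~\ref{thm:propSetOfIndicesFull}~\ref{enu:SoIFull-orderedSet}), and then $\eps_k\le z_k<\eps_0$ does give $\eps_k<\eps_0$, because $\eps_k=\eps_0$ would force $z_k=\eps_0$; in $\hat{\mathbb{I}}$ one instead uses the characterization $(z_k)_{k\in\N}\to\emptyset\iff\underline{z_k}\to0^{+}$ of Ex.~\ref{exa:seqTendToEmptyset}, so that $\underline{\eps_k}\le\underline{z_k}\to0$ yields $(\eps_k)_{k\in\N}\to\emptyset$. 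To make your proof complete you must add one of these hypotheses (antisymmetry of $\le$ on $A_{\le a}$, or a monotone rank $\underline{\,\cdot\,}$ detecting $\to\emptyset$) rather than appeal to directedness, which cannot do the job.
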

\begin{proof}
\ref{enu:neg-1stBigOh1} $\Rightarrow$ \ref{enu:neg-1stBigOh2}:
We assume that 
\begin{equation}
\forall H\in\R_{>0}\,\forall\bar{\eps}_{0}\in A_{\le a}\,\exists\eps\in A_{\le\bar{\eps}_{0}}:\ |x_{\eps}|>H\cdot|y_{\eps}|.\label{eq:neg}
\end{equation}
Consider an $H\in\R_{>0}$. The sequence $(\eps_{k})_{k\in\N}$ is
defined recursively and using the axiom of countable choice. The first
step of the sequence is defined as follows. Since $z_{0}\in A_{\le a}$,
we can set $\bar{\eps}_{0}=z_{0}$ in \eqref{eq:neg} to obtain the
existence of $\eps_{0}\in A_{\le z_{0}}$ such that $\left|x_{\eps_{0}}\right|>H\cdot\left|y_{\eps_{0}}\right|$.
Assume that we have already proved the existence of $\eps_{k}\in A_{\le a}$
such that $\eps_{k}\le z_{k}$ and $\left|x_{\eps_{k}}\right|>H\cdot\left|y_{\eps_{k}}\right|$.
We can now apply \eqref{eq:neg} with $\bar{\eps}_{0}=z_{k+1}$ to
obtain the existence of $\eps_{k+1}\in A_{\le z_{k+1}}$ such that
$\left|x_{\eps_{k+1}}\right|>H\cdot\left|y_{\eps_{k+1}}\right|$.
Since $\eps_{k}\le z_{k}$ and $(z_{k})_{k\in\N}\to\emptyset$ in
$A_{\le a}$, also $(\eps_{k})_{k\in\N}\to\emptyset$ in $A_{\le a}$.

\ref{enu:neg-1stBigOh2} $\Rightarrow$ \ref{enu:neg-1stBigOh1}:
By contradiction, assume that\foreignlanguage{english}{ 
\begin{equation}
\exists H\in\R_{>0}\,\exists\bar{\eps}_{0}\in A_{\le a}\,\forall\eps\in A_{\le\bar{\eps}_{0}}:\ |x_{\eps}|\le H\cdot|y_{\eps}|.\label{eq:neg-1stBigOh-contr}
\end{equation}
Since $(\eps_{k})_{k\in\N}\to\emptyset$ in $A_{\le a}$, for $k$
sufficiently big we have $\eps_{k}\le\bar{\eps}_{0}$ and so $|x_{\eps_{k}}|\le H\cdot|y_{\eps_{k}}|$
by \eqref{eq:neg-1stBigOh-contr}, which contradicts \ref{enu:2aneg-1stBigOh}.}
\end{proof}
Condition \ref{enu:usualBigOh1} of Ex. \ref{exa:usualBigOh} and
the definition of the full algebra $\gse$ (see Section \ref{sec:The-full-algebra})
are the motivations for the following second notion of big-O in a
set of indices:
\begin{defn}
\label{def:2ndBigOh}Let $\mathbb{I}=(I,\le,\mathcal{I})$ be a set
of indices. Let $\mathcal{J}\subseteq\mathcal{I}$ be a non empty
subset of $\mathcal{I}$ such that 
\begin{equation}
\forall A,B\in\mathcal{J}\,\exists C\in\mathcal{J}:\ C\subseteq A\cap B.\label{eq:hypJ}
\end{equation}
 Finally, let $(x_{\eps})$, $(y_{\eps})\in\R^{I}$ be nets of real
numbers. Then we say 
\[
x_{\eps}=O_{\mathcal{J}}(y_{\eps})\text{ as }\eps\in\mathbb{I}
\]
 if 
\[
\exists A\in\mathcal{J}\,\forall a\in A:\ x_{\eps}=O_{a,A}(y_{\eps}).
\]
 We simply write $x_{\eps}=O(y_{\eps})$ (as $\eps\in\mathbb{I}$)
when $\mathcal{J=I}$, i.e.\ for $x_{\eps}=O_{\mathcal{I}}(y_{\eps})$.
\end{defn}

\subsection*{Intuitive interpretation}

In this section, we want to give an intuitive interpretation of the
structures we are introducing.

We can think of each $e\in I$ as a measuring instrument to evaluate
our observables $(x_{\eps})\in\R^{I}$. For example, we can think
of a thermometer used to measure the temperature at some point. Each
$A\in\mathcal{I}$ is a class of instruments ($A\subseteq I$) having
at least a certain \emph{accuracy}. The relation $e\le\eps$ is interpreted
as ``the measuring instrument $e$ is spatially more accurate than
$\eps$'', in the sense that every physical measuring instrument
averages the measure of an observable in a neighbourhood of some spatial
point. For the instrument $e$ this neighbourhood is smaller than
that of $\eps$. Therefore, $x_{\eps}=O_{a,A}(y_{\eps})$ can be interpreted
saying: ``We are able to state that $(x_{\eps})$ is bounded by $(y_{\eps})$
if we use any instrument $\eps$ of class $A\in\mathcal{I}$, and
whose accuracy is greater than that of $a$''. Finally, $x_{\eps}=O(y_{\eps})$
can be intuitively interpreted saying: ``We can find an accuracy
class $A\in\mathcal{I}$ such that for each instrument $a\in A$ of
that class, we can state that $x_{\eps}=O_{a,A}(y_{\eps})$''. Condition
\ref{enu:DefSoI-intersection} of Def. \ref{def:setOfIndices} states
that from two accuracy classes $A,B\in\mathcal{I}$ we can always
find an accuracy class $C\in\mathcal{I}$ such that $C\subseteq A\cap B$,
i.e.\ whose instruments have accuracy greater or equal to that of
both $A$ and $B$. Condition \ref{enu:DefSoI-DonwDir} of Def. \ref{def:setOfIndices}
states that taking the instrument $e\le a\in A\in\mathcal{I}$, we
can always take instruments which are spatially more accurate than
$e$ and remaining in the same accuracy class $A$.

\medskip{}

The simplification consequent to the use of the second notion of big-O
is due to the following theorem, which states that also the second
big-O formally behaves as expected:
\begin{thm}
\label{thm:2ndBigOh-prop}Under the assumptions of Def. \ref{def:2ndBigOh},
the following properties of $O_{\mathcal{J}}$, as $\eps\in\mathbb{I}$,
hold:
\begin{enumerate}[%
leftmargin=*,label=(\roman*),align=left %
]
\item \foreignlanguage{british}{\label{enu:2ndBigOh-rifl}$x_{\eps}=O_{\mathcal{J}}(x_{\eps})$ }
\item \label{enu:2ndBigOh-trans}$x_{\eps}=O_{\mathcal{J}}(y_{\eps})$ and
$y_{\eps}=O_{\mathcal{J}}(z_{\eps})$, then $x_{\eps}=O_{\mathcal{J}}(z_{\eps})$ 
\item \label{enu:2ndBigOh-prod}$O_{\mathcal{J}}(x_{\eps})\cdot O_{\mathcal{J}}(y_{\eps})=O_{\mathcal{J}}(x_{\eps}\cdot y_{\eps})$ 
\item \label{enu:2ndBigOh-sum}$O_{\mathcal{J}}(x_{\eps})+O_{\mathcal{J}}(y_{\eps})=O_{\mathcal{J}}\left(\left|x_{\eps}\right|+\left|y_{\eps}\right|\right)$ 
\item \label{enu:2ndBigOh-prodExt}$x_{\ep}\cdot O_{\mathcal{J}}(y_{\eps})=O_{\mathcal{J}}(x_{\eps}\cdot y_{\eps})$ 
\item \label{enu:2ndBigOh-sumEqualSummand}$O_{\mathcal{J}}(x_{\eps})+O_{\mathcal{J}}(x_{\eps})=O_{\mathcal{J}}(x_{\eps})$ 
\item \label{enu:2ndBigOh-sumExt}If $x_{\eps},y_{\eps}\ge0$ for all $\eps\in I$,
then $x_{\eps}+O_{\mathcal{J}}(y_{\eps})=O_{\mathcal{J}}(x_{\eps}+y_{\eps})$ 
\item \label{enu:2ndBigOh-prodScal1}$\forall k\in\R:\ O_{\mathcal{J}}(k\cdot x_{\eps})=O_{\mathcal{J}}(x_{\eps})$ 
\item \label{enu:2ndBigOh-prodScal2}$\forall k\in\R:\ k\cdot O_{\mathcal{J}}(x_{\eps})=O_{\mathcal{J}}(x_{\eps})$ 
\end{enumerate}
\end{thm}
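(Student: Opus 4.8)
The plan is to derive every property of $O_{\mathcal{J}}$ directly from the corresponding property of the first big-O already established in Theorem \ref{thm:usualBigOh-prop}, using two structural facts as glue: the localization property \ref{enu:1stBigOh-subsetA} (which lets us replace an index set $A$ by any $\mathcal{I}$-subset still containing the relevant point) and the directedness condition \eqref{eq:hypJ} of $\mathcal{J}$. Throughout, a statement $x_{\eps}=O_{\mathcal{J}}(y_{\eps})$ is unfolded, by Definition \ref{def:2ndBigOh}, into the existence of some $A\in\mathcal{J}$ with $x_{\eps}=O_{a,A}(y_{\eps})$ for every $a\in A$.

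The properties split naturally into two groups according to how many instances of $O_{\mathcal{J}}$ occur in the hypothesis. For the single-instance properties \ref{enu:2ndBigOh-rifl}, \ref{enu:2ndBigOh-prodExt}, \ref{enu:2ndBigOh-sumExt}, \ref{enu:2ndBigOh-prodScal1} and \ref{enu:2ndBigOh-prodScal2}, no use of \eqref{eq:hypJ} is needed. For reflexivity one picks any $A\in\mathcal{J}$ (which exists since $\mathcal{J}\ne\emptyset$) and applies Theorem \ref{thm:usualBigOh-prop}\ref{enu:1stBigOh-rifl} at each $a\in A$. For the others one takes the single witnessing set $A\in\mathcal{J}$ supplied by the hypothesis and applies the matching part of Theorem \ref{thm:usualBigOh-prop} (respectively \ref{enu:1stBigOh-prodExt}, \ref{enu:1stBigOh-sumExt}, \ref{enu:1stBigOh-prodScal1}, \ref{enu:1stBigOh-prodScal2}) pointwise over $a\in A$; since the same $A$ serves throughout, the conclusion is immediately of the required form.

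The two-instance properties \ref{enu:2ndBigOh-trans}, \ref{enu:2ndBigOh-prod}, \ref{enu:2ndBigOh-sum} and \ref{enu:2ndBigOh-sumEqualSummand} are where the real work lies, and I would treat transitivity as the template. Given $x_{\eps}=O_{\mathcal{J}}(y_{\eps})$ and $y_{\eps}=O_{\mathcal{J}}(z_{\eps})$, unfold to obtain $A,B\in\mathcal{J}$ with $x_{\eps}=O_{a,A}(y_{\eps})$ for all $a\in A$ and $y_{\eps}=O_{b,B}(z_{\eps})$ for all $b\in B$. By \eqref{eq:hypJ} choose $C\in\mathcal{J}$ with $C\subseteq A\cap B$. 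Fix $c\in C$: then $c\in A$ and $c\in B$, so both $x_{\eps}=O_{c,A}(y_{\eps})$ and $y_{\eps}=O_{c,B}(z_{\eps})$ hold, and since $c\in C\subseteq A$, $c\in C\subseteq B$ with $C\in\mathcal{I}$, property \ref{enu:1stBigOh-subsetA} relocalizes both relations to the single set $C$, giving $x_{\eps}=O_{c,C}(y_{\eps})$ and $y_{\eps}=O_{c,C}(z_{\eps})$. Now Theorem \ref{thm:usualBigOh-prop}\ref{enu:1stBigOh-trans} applies within $C$ to yield $x_{\eps}=O_{c,C}(z_{\eps})$; as $c\in C$ was arbitrary, this is exactly $x_{\eps}=O_{\mathcal{J}}(z_{\eps})$. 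The same three-step scheme — extract two sets, intersect them inside $\mathcal{J}$ via \eqref{eq:hypJ}, relocalize with \ref{enu:1stBigOh-subsetA}, then apply the single-set identity — proves \ref{enu:2ndBigOh-prod}, \ref{enu:2ndBigOh-sum} and \ref{enu:2ndBigOh-sumEqualSummand}, invoking Theorem \ref{thm:usualBigOh-prop}\ref{enu:1stBigOh-prod}, \ref{enu:1stBigOh-sum} and \ref{enu:1stBigOh-sumEqualSummand} respectively at the last step.

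The one point demanding care — and the main obstacle — is precisely this relocalization: the first big-O $O_{a,A}$ depends genuinely on the ambient set $A$ through the directed set $A_{\le a}$ over which the estimate is quantified, so the two hypotheses cannot simply be combined on $A\cap B$ without justification. What makes the argument go through is that property \ref{enu:1stBigOh-subsetA} was proved (using the clause of Definition \ref{def:setOfIndices}\ref{enu:DefSoI-DonwDir} that does \emph{not} require $e\in A$) exactly so that an estimate valid on $A$ transfers to any smaller $\mathcal{I}$-set containing the base point; together with the closure of $\mathcal{J}$ under \eqref{eq:hypJ} this lets us always retreat to a common set of $\mathcal{J}$ on which the established first-big-O calculus of Theorem \ref{thm:usualBigOh-prop} applies verbatim.
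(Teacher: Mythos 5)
Your proposal is correct and follows essentially the same route as the paper's proof: unfold $O_{\mathcal{J}}$ via Def.~\ref{def:2ndBigOh}, intersect the two witnessing sets inside $\mathcal{J}$ using \eqref{eq:hypJ}, relocalize both first-big-O relations to the common set via Thm.~\ref{thm:usualBigOh-prop}~\ref{enu:1stBigOh-subsetA}, and then apply the corresponding single-set property, noting (as the paper does) that \ref{enu:2ndBigOh-prodExt}, \ref{enu:2ndBigOh-sumExt}, \ref{enu:2ndBigOh-prodScal1} and \ref{enu:2ndBigOh-prodScal2} need no assumption on $\mathcal{J}$ at all. The only cosmetic difference is that you work out transitivity as the template whereas the paper spells out the product property \ref{enu:2ndBigOh-prod}; the underlying three-step scheme is identical.
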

\begin{proof}
\ref{enu:2ndBigOh-rifl}: There exists $A\in\mathcal{J}$ since $\mathcal{J}$
is non empty by assumption. For all $a\in A$ the property $x_{\eps}=O_{a,A}(x_{\eps})$
follows by \ref{enu:1stBigOh-rifl} of Thm. \ref{thm:usualBigOh-prop}.

\ref{enu:2ndBigOh-prod}: Once again, we prove this property to illustrate
the general idea of the proof of other properties like \ref{enu:2ndBigOh-trans},
\ref{enu:2ndBigOh-sum} and \ref{enu:2ndBigOh-sumEqualSummand}.

As usual, we have to prove that 
\[
x'_{\eps}=O_{\mathcal{J}}(x_{\eps})\ ,\ y'_{\eps}=O_{\mathcal{J}}(y_{\eps})\ \Rightarrow\ x'_{\eps}\cdot y'_{\eps}=O_{\mathcal{J}}(x_{\eps}\cdot y_{\eps}).
\]
Therefore, we assume 
\begin{align}
\exists A & \in\mathcal{J}\,\forall c\in A:\ x'_{\eps}=O_{c,A}(x_{\eps})\label{eq:2ndBigOh-x'-x}\\
\exists B & \in\mathcal{J}\,\forall c\in B:\ y'_{\eps}=O_{c,B}(y_{\eps}).\label{eq:2ndBigOh-y'-y}
\end{align}
The assumptions on $\mathcal{J}$ yield the existence of $C\in\mathcal{J}$
such that $C\subseteq A$ and $C\subseteq B$. Property \ref{enu:1stBigOh-subsetA}
of Thm. \ref{thm:usualBigOh-prop} and \eqref{eq:2ndBigOh-x'-x},
\eqref{eq:2ndBigOh-y'-y} give $x'_{\eps}=O_{c,C}(x_{\eps})$ and
$y'_{\eps}=O_{c,C}(y_{\eps})$ for all $c\in C$. We can thus apply
the analogous property \ref{enu:1stBigOh-prod} of Thm. \ref{thm:usualBigOh-prop}
to get the conclusion 
\[
x'_{\eps}\cdot y'_{\eps}=O_{c,C}(x_{\eps}\cdot y_{\eps})\qquad\forall c\in C.
\]
For the remaining properties \ref{enu:2ndBigOh-prodExt}, \ref{enu:2ndBigOh-sumExt},
\ref{enu:2ndBigOh-prodScal1} and \ref{enu:2ndBigOh-prodScal2} we
don't even need to use the assumptions on $\mathcal{J}$.
\end{proof}
The following result is a direct consequence of Ex. \ref{exa:stdAndNS},
\ref{exa:usualBigOh} and Def. \ref{def:2ndBigOh}. Its aim is not,
of course, to simplify but to show the unifying capability of the
notions of set of indices in connection with the results about $\gse$
and $\gsd$ we will show in the subsequent sections.
\begin{cor}
\label{cor:specialAlgebra}Let $\Omega\subseteq\R^{n}$ be an open
set and $(u_{\eps})\in\mathcal{C}^{\infty}(\Omega,\R)$ be a net of
smooth functions. We use the notations of \cite{GKOS} for moderate
and negligible nets related to the special algebra $\gss(\Omega)$,
and the notations of \cite{ToVe08} for similar notions related to
the algebra $\hat{\gs}(\Omega)$ of asymptotic functions. Then
\begin{enumerate}[%
leftmargin=*,label=(\roman*),align=left %
]
\item \foreignlanguage{british}{$(u_{\eps})\in\mathcal{E}_{M}^{\srm}(\Omega)$
if and only if 
\[
\forall K\Subset\Omega\,\forall\alpha\in\N^{n}\,\exists N\in\N:\ \sup_{x\in K}\left|\partial^{\alpha}u_{\eps}(x)\right|=O(\eps^{-N})\text{ as }\eps\in\mathbb{I}^{\srm}
\]
}\lyxdeleted{}{Fri Jul  4 15:14:47 2014}{\foreignlanguage{british}{
}} 
\item $(u_{\eps})\in\mathcal{N}^{\srm}(\Omega)$ if and only if 
\[
\forall K\Subset\Omega\,\forall\alpha\in\N^{n}\,\forall m\in\N:\ \sup_{x\in K}\left|\partial^{\alpha}u_{\eps}(x)\right|=O(\eps^{m})\text{ as }\eps\in\mathbb{I}^{\srm}
\]

\item $(u_{\eps})\in\mathcal{M}\left(\mathcal{E}(\Omega)^{\D_{0}}\right)$
if and only if 
\[
\forall K\Subset\Omega\,\forall\alpha\in\N^{n}\,\exists N\in\N:\ \sup_{x\in K}\left|\partial^{\alpha}u_{\eps}(x)\right|=O(\underline{\eps}^{-N})\text{ as }\eps\in\hat{\mathbb{I}}
\]

\item $(u_{\eps})\in\mathcal{N}\left(\mathcal{E}(\Omega)^{\D_{0}}\right)$
if and only if 
\[
\forall K\Subset\Omega\,\forall\alpha\in\N^{n}\,\forall m\in\N:\ \sup_{x\in K}\left|\partial^{\alpha}u_{\eps}(x)\right|=O(\underline{\eps}^{m})\text{ as }\eps\in\hat{\mathbb{I}}.
\]

\end{enumerate}
\end{cor}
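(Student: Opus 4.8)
The plan is to unwind the big-O notation appearing on the right-hand sides of (i)--(iv) by means of Def.~\ref{def:2ndBigOh} and then to invoke the equivalences already established in Ex.~\ref{exa:usualBigOh}, matching each resulting condition with the classical definitions of the moderate and negligible nets for the two algebras. Since all four parts have the same structure, I would first record the two building blocks and then apply them four times.

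First I would recall the relevant classical definitions. For the special algebra, a net $(u_\eps)$ lies in $\mathcal{E}_M^{\srm}(\Omega)$ iff for every $K\Subset\Omega$ and every $\alpha\in\N^n$ there is $N\in\N$ with $\sup_{x\in K}|\partial^\alpha u_\eps(x)|=O(\eps^{-N})$ as $\eps\to0^+$ in the ordinary Landau sense, and lies in $\mathcal{N}^{\srm}(\Omega)$ iff the analogous condition with $O(\eps^m)$ for every $m\in\N$ holds (cf.~\cite{GKOS}). For the asymptotic algebra, the memberships $(u_\eps)\in\mathcal{M}(\mathcal{E}(\Omega)^{\D_0})$ and $(u_\eps)\in\mathcal{N}(\mathcal{E}(\Omega)^{\D_0})$ are by definition (see~\cite{ToVe08}) the corresponding boundedness conditions holding \emph{almost everywhere} with respect to the ultrafilter $\hat{\mathcal{I}}$.

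Next I would treat (i) and (ii) together. By Def.~\ref{def:2ndBigOh} applied with $\mathcal{J}=\mathcal{I}^{\srm}$, the statement $\sup_{x\in K}|\partial^\alpha u_\eps(x)|=O(\eps^{-N})$ as $\eps\in\mathbb{I}^{\srm}$ means precisely that $\exists A\in\mathcal{I}^{\srm}\,\forall a\in A$ one has $\sup_{x\in K}|\partial^\alpha u_\eps(x)|=O_{a,A}(\eps^{-N})$. By the equivalence recorded in Ex.~\ref{exa:usualBigOh} for $\mathbb{I}^{\srm}$ (the $\exists A\,\forall a$ combination being one of those listed there), this is in turn equivalent to $\sup_{x\in K}|\partial^\alpha u_\eps(x)|=O(\eps^{-N})$ as $\eps\to0^+$, which is exactly the classical defining condition. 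Prefixing both sides with the quantifiers $\forall K\Subset\Omega\,\forall\alpha\in\N^n\,\exists N$ (for moderateness), respectively $\forall K\Subset\Omega\,\forall\alpha\in\N^n\,\forall m$ (for negligibility), yields (i) and (ii).

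Finally, for (iii) and (iv) I would argue in the same way, now with $\mathcal{J}=\hat{\mathcal{I}}$. Unwinding $O(\underline{\eps}^{-N})$, respectively $O(\underline{\eps}^{m})$, as $\eps\in\hat{\mathbb{I}}$ via Def.~\ref{def:2ndBigOh} again produces the $\exists A\in\hat{\mathcal{I}}\,\forall a\in A$ form, i.e.\ condition~\ref{enu:usualBigOh1} of Ex.~\ref{exa:usualBigOh}; the equivalence \ref{enu:usualBigOh1}$\Leftrightarrow$\ref{enu:usualBigOh2} proved there identifies this with the existence of $H\in\R_{>0}$ such that the corresponding inequality holds almost everywhere in the sense of $\hat{\mathcal{I}}$. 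Matching this with the definition of moderate and negligible nets in~\cite{ToVe08} completes (iii) and (iv). The only point that genuinely requires care is the verification that the \cite{ToVe08} definitions coincide with these almost-everywhere boundedness conditions; but the right-hand sides were set up precisely to reproduce them, so this reduces to unwinding the definitions, and the corollary follows as a direct consequence of Ex.~\ref{exa:stdAndNS}, Ex.~\ref{exa:usualBigOh} and Def.~\ref{def:2ndBigOh}.
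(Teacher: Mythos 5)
Your proposal is correct and follows exactly the route the paper intends: the paper gives no separate proof, stating the corollary to be ``a direct consequence of Ex.~\ref{exa:stdAndNS}, \ref{exa:usualBigOh} and Def.~\ref{def:2ndBigOh}'', and your argument is precisely the unwinding of $O_{\mathcal{I}}$ via Def.~\ref{def:2ndBigOh} into the $\exists A\,\forall a$ form, followed by the equivalences of Ex.~\ref{exa:usualBigOh} (classical Landau big-O for $\mathbb{I}^{\srm}$, almost-everywhere boundedness with respect to $\hat{\mathcal{I}}$ for $\hat{\mathbb{I}}$) and a matching with the definitions in \cite{GKOS} and \cite{ToVe08}. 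Your explicit handling of the quantifier prefixes and your remark that the $\hat{\mathbb{I}}$ case reduces to the \ref{enu:usualBigOh1}$\Leftrightarrow$\ref{enu:usualBigOh2} equivalence merely make explicit what the paper leaves implicit.
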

Henceforth, $\Omega$ will always denote an open subset of $\R^{n}$.

\section{\label{sec:The-full-algebra}The full algebra $\gse$}

The idea to define the correct set of indices for the full algebra
is that in the definition of $\gse$ we always use representatives
evaluated at $\eps\odot\phi$ and consider the asymptotics for $\eps\to0^{+}$
and fixed $\phi$.
\begin{defn}
\label{def:setOfIndicesFullAlgebra}We define:
\begin{enumerate}[%
leftmargin=*,label=(\roman*),align=left %
]
\item \foreignlanguage{british}{$I^{\erm}:=\mathcal{A}_{0}=\left\{ \phi\in\D(\R^{n})\mid\int\phi=1\right\} $} 
\item $\mathcal{I}^{\erm}:=\left\{ \mathcal{A}_{q}\mid q\in\N\right\} $,
where $\mathcal{A}_{q}$ is the set of all $\phi\in\mathcal{A}_{0}$
such that $\int x^{\alpha}\cdot\phi(x)\diff{x}=0$ for all $\alpha\in\N^{n}$
with $1\le|\alpha|\le q$. 
\item For $\eps$, $e\in I^{\erm}$, we define $\eps\le e$ iff there exist
$\phi\in\mathcal{A}_{0}$, $r_{\eps}$, $r_{e}\in\R_{>0}$ such that

\begin{enumerate}
\item $r_{\eps}\le r_{e}$ 
\item $\eps=r_{\eps}\odot\phi$ and $e=r_{e}\odot\phi$. 
\end{enumerate}

\noindent Equivalently, we can define $\eps\le e$ iff there exists
$r\in\R_{>0}$ such that $r\le1$ and $\eps=r\odot e$.

\item $\mathbb{I}^{\erm}:=(I^{\erm},\le,\mathcal{I}^{\erm})$ 
\end{enumerate}
\end{defn}
\noindent We firstly note that if $\phi\in\mathcal{A}_{q}$ then also
$r\odot\phi\in\mathcal{A}_{q}$ for all $r\in\R_{>0}$ and thus 
\[
e\in A\in\mathcal{I}^{\erm}\then(\emptyset,e]\subseteq A.
\]
 Therefore $A_{\le e}=(\emptyset,e]$ and so $O_{e,A}$ doesn't depend
on $A$, and we can simply write $x_{\eps}=O_{e}(y_{\eps})$ as $\eps\in\mathbb{I}^{\erm}$.

\noindent Secondly, we note that, contrary to the case of $\mathbb{I}^{\srm}$
and $\hat{\mathbb{I}}$, in this case we don't have $(\emptyset,\eps_{0}]\in\mathcal{I}^{\erm}$.
Moreover, $A\in\mathcal{I}^{\erm}$ is not downward directed by $<$.
\begin{thm}
\noindent \label{thm:propSetOfIndicesFull}\ 
\begin{enumerate}[%
leftmargin=*,label=(\roman*),align=left %
]
\item \foreignlanguage{british}{\label{enu:SoIFull-orderedSet}$(I^{\erm},\le)$
is an ordered set} 
\item \label{enu:SoIFull-totallyDown}$(\emptyset,e]$ is totally ordered
and downward directed by $<$ for all $e\in I^{\erm}$ 
\item \label{enu:SoIFull-equiv}Let $f,g\in\R^{I^{\erm}}$ and $\phi\in I^{\erm}$.
We use both the notation $f_{\eps}=f(\eps)$ for evaluating these
maps. Then it results that 
\[
f_{\eps}=O_{\phi}(g_{\eps})\text{ as }\eps\in\mathbb{I}^{\erm}
\]
 if and only if 
\[
f(r\odot\phi)=O\left[g(r\odot\phi)\right]\text{ as }r\to0^{+}.
\]

\end{enumerate}
\end{thm}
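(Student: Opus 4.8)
The plan is to isolate one engine and then read off all three statements from it. For fixed $\phi\in I^{\erm}$ I would study the scaling map $\lambda_\phi\colon r\in(0,1]\mapsto r\odot\phi$ and prove that it is an order isomorphism from $\big((0,1],\le\big)$ onto $\big((\emptyset,\phi],\le\big)$. Surjectivity onto $(\emptyset,\phi]$ is immediate from the ``equivalently'' clause of Def.~\ref{def:setOfIndicesFullAlgebra}, which says $\eps\le\phi$ exactly when $\eps=r\odot\phi$ for some $r\le1$; injectivity follows because $r_{1}\odot\phi=r_{2}\odot\phi$ forces $(r_{1}r_{2}^{-1})\odot\phi=\phi$, whence $r_{1}=r_{2}$ by the freeness of $\odot$ (Lem.~\ref{lem:identityForOperators}\,\ref{enu:indetOdot}), using $\phi\ne0$ since $\int\phi=1$. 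For the order, $r_{1}\le r_{2}$ gives $r_{1}\odot\phi=(r_{1}r_{2}^{-1})\odot(r_{2}\odot\phi)$ with $r_{1}r_{2}^{-1}\le1$, hence $r_{1}\odot\phi\le r_{2}\odot\phi$; conversely $r_{1}\odot\phi\le r_{2}\odot\phi$ produces $s\le1$ with $r_{1}\odot\phi=(sr_{2})\odot\phi$, and injectivity yields $r_{1}=sr_{2}\le r_{2}$.

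For \ref{enu:SoIFull-orderedSet} I would check reflexivity and transitivity directly from the group-action axioms ($1\odot e=e$ and $r\odot(s\odot f)=(rs)\odot f$, the latter also giving $rs\le1$ from $r,s\le1$), and then treat antisymmetry as the only genuinely new point: if $\eps\le e$ and $e\le\eps$, pick $r,s\le1$ with $\eps=r\odot e$ and $e=s\odot\eps$, so $\eps=(rs)\odot\eps$; freeness forces $rs=1$, which together with $r,s\le1$ gives $r=s=1$ and hence $\eps=e$. Statement \ref{enu:SoIFull-totallyDown} is then a transport of structure: $\big((0,1],\le\big)$ is totally ordered and, since $\frac{1}{2}\min(r_{1},r_{2})<r_{1},r_{2}$, is downward directed by the strict order $<$; an order isomorphism preserves both properties, so $(\emptyset,e]=\lambda_{e}\big((0,1]\big)$ inherits them for every $e\in I^{\erm}$.

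Finally, for \ref{enu:SoIFull-equiv} I would simply unwind the two notations through $\lambda_\phi$. By the remark following Def.~\ref{def:setOfIndicesFullAlgebra} we have $A_{\le\phi}=(\emptyset,\phi]$, so $f_\eps=O_\phi(g_\eps)$ reads, via Def.~\ref{def:usualBigOh}, as the existence of $H>0$ and $\eps_{0}\in(\emptyset,\phi]$ with $|f_\eps|\le H|g_\eps|$ for all $\eps\in(\emptyset,\eps_{0}]$. Writing $\eps_{0}=r_{0}\odot\phi$ with $r_{0}\le1$, the action law gives $(\emptyset,\eps_{0}]=\{s\odot\eps_{0}\mid s\le1\}=\{(sr_{0})\odot\phi\mid s\le1\}=\lambda_\phi\big((0,r_{0}]\big)$, so the condition becomes: there exist $H>0$ and $r_{0}\in(0,1]$ with $|f(r\odot\phi)|\le H|g(r\odot\phi)|$ for all $r\in(0,r_{0}]$. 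This is exactly $f(r\odot\phi)=O[g(r\odot\phi)]$ as $r\to0^{+}$. The one point needing a word is the harmless mismatch between the constraint $r_{0}\le1$ here and an arbitrary threshold $\delta>0$ in the classical big-O; in the direction starting from the classical statement I would replace $\delta$ by any $r_{0}$ with $0<r_{0}<\min(\delta,1)$.

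I expect no deep obstacle: the whole theorem hinges on the freeness of $\odot$ (Lem.~\ref{lem:identityForOperators}), which simultaneously supplies antisymmetry in \ref{enu:SoIFull-orderedSet} and injectivity of $\lambda_\phi$. Once the order isomorphism is established, \ref{enu:SoIFull-totallyDown} and \ref{enu:SoIFull-equiv} are bookkeeping, and the only place demanding care is tracking the nested quantifiers of the big-O definition through $\lambda_\phi$ in \ref{enu:SoIFull-equiv}, in particular verifying that the inner set $(\emptyset,\eps_{0}]$ corresponds precisely to the interval $(0,r_{0}]$ in the variable $r$.
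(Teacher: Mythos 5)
Your proof is correct, and it rests on the same two computational pillars as the paper's own proof: freeness of the action $\odot$ (Lem.~\ref{lem:identityForOperators}) to get antisymmetry and injectivity, and the decomposition $\eps_{0}=r_{0}\odot\phi$, $\eps=r\odot\eps_{0}=rr_{0}\odot\phi$ to translate the big-O. But you organize the argument differently. The paper proves \ref{enu:SoIFull-totallyDown} and \ref{enu:SoIFull-equiv} by direct computation (writing $\eps=\frac{r}{s}\odot\eta$ for totality, taking $t<\min(r,s)$ for directedness, and unwinding the quantifiers of Def.~\ref{def:usualBigOh} by hand), and only afterwards records the order isomorphism $r\mapsto r\odot e$ as Cor.~\ref{cor:I^e-iso}; you invert this order, establishing the isomorphism $\lambda_{\phi}$ first and then obtaining \ref{enu:SoIFull-totallyDown} by transport of structure and \ref{enu:SoIFull-equiv} via the identification $(\emptyset,\eps_{0}]=\lambda_{\phi}\left((0,r_{0}]\right)$. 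This buys a unified account in which the quantifier bookkeeping of \ref{enu:SoIFull-equiv} becomes fully mechanical, and it incidentally sidesteps a typo in the paper's proof (the paper writes ``for all $r\in(r,r_{0}]$'' where $(0,r_{0}]$ is meant); the cost is the modest extra work of checking injectivity and monotonicity of $\lambda_{\phi}^{-1}$ up front, which the paper defers to the corollary. Your treatment of the two side issues is also sound and matches the text: the threshold mismatch between $r_{0}\le1$ and an arbitrary classical $\delta>0$ is handled just as in the paper's ``we can assume $r_{0}\le1$'', and your appeal to the remark $A_{\le\phi}=(\emptyset,\phi]$, so that $O_{\phi}$ is independent of $A$, is exactly the paper's. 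One dependency worth stating explicitly if you write this up: transporting downward directedness \emph{by the strict relation} $<$ requires that $\lambda_{\phi}$ preserve strict inequality, which follows from its injectivity (equivalently from part \ref{enu:SoIFull-orderedSet}), so the order of your steps — freeness, then isomorphism, then transport — is what makes the argument close.
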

\begin{proof}
\ref{enu:SoIFull-orderedSet}: Reflexivity follows from $1\odot\eps=\eps$.
In order to prove transitivity, assume 
\[
\eta=s\odot\epsilon\ ,\ s\le1\ ,\ \eps=r\odot e\ ,\ r\le1.
\]
 Then $\eta=s\odot(r\odot e)=sr\odot e$ and $sr\le1$, so that $\eta\le e$.
To prove antisymmetry, assume that 
\[
\eps=r\odot e\ ,\ r\le1\ ,\ e=s\odot\eps\ ,s\le1.
\]
 Then $\eps=rs\odot\eps$, which implies $rs=1$ by Lem. \ref{lem:identityForOperators}
and hence $r=s=1$.

\noindent \ref{enu:SoIFull-totallyDown}: Assume that $\eps=r\odot e$,
$r\le1$ and $\eta=s\odot e$, $s\le1$. Therefore $\eps=\frac{r}{s}\odot\eta$
and we have $\eps\le\eta$ or $\eta\le\eps$ according to $\frac{r}{s}\le1$
or $\frac{s}{r}\le1$. Moreover, taking $t<\min(r,s)$ and $\sigma:=t\odot e\in(\emptyset,e]$
we get $\sigma<\eps$ and $\sigma<\eta$.

\noindent \ref{enu:SoIFull-equiv}: Assume that $f_{\eps}=O_{\phi}(g_{\eps})$,
i.e. 
\begin{equation}
\exists H\in\R_{>0}\,\exists\eps_{0}\le\phi\,\forall\ep\le\eps_{0}:\ \left|f_{\eps}\right|\le H\cdot\left|g_{\eps}\right|.\label{eq:SoIFull-Equiv-new}
\end{equation}
 Hence, we can write $\eps_{0}=r_{0}\odot\phi$ for some $0<r_{0}\le1$,
and for all $r\in(r,r_{0}]$ we get $\eps:=r\odot\phi\le\eps_{0}$.
Thus, condition \eqref{eq:SoIFull-Equiv-new} implies 
\[
\left|f_{\eps}\right|=\left|f\left(r\odot\phi\right)\right|\le H\cdot\left|g_{\eps}\right|=H\cdot\left|g\left(r\odot\phi\right)\right|,
\]
 which proves that $f(r\odot\phi)=O\left[g(r\odot\phi)\right]$ as
$r\to0^{+}$.

Vice versa, assume 
\begin{equation}
\forall r\in(0,r_{0}]:\ \left|f\left(r\odot\phi\right)\right|\le H\cdot\left|g\left(r\odot\phi\right)\right|,\label{eq:SoIFull-Equiv-old}
\end{equation}
 where $H,r_{0}\in\R_{>0}$. We can assume that $r_{0}\le1$, so that
setting $\eps_{0}:=r_{0}\odot\phi$ we have $\eps_{0}\le\phi$. For
each $\eps\le\eps_{0}$, we can write $\eps=r\odot\eps_{0}=r\cdot r_{0}\odot\phi$,
with $r\le1$. Thus $r\cdot r_{0}\in(0,r_{0}]$ and \eqref{eq:SoIFull-Equiv-old}
yields $\left|f\left(r\cdot r_{0}\odot\phi\right)\right|=\left|f_{\eps}\right|\le H\cdot\left|g\left(r\cdot r_{0}\odot\phi\right)\right|=H\cdot\left|g_{\eps}\right|$,
which is our conclusion.\end{proof}
\begin{cor}
\label{cor:I^e-SoI}$\mathbb{I}^{\erm}=(I^{\erm},\le,\mathcal{I}^{\erm})$
is a set of indices.
\end{cor}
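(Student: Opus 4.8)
The goal is to verify that $\mathbb{I}^{\erm}=(I^{\erm},\le,\mathcal{I}^{\erm})$ satisfies the four conditions \ref{enu:DefSoI-preorder}--\ref{enu:DefSoI-DonwDir} of Definition \ref{def:setOfIndices}. Most of the work has already been done: Theorem \ref{thm:propSetOfIndicesFull}\,\ref{enu:SoIFull-orderedSet} shows that $(I^{\erm},\le)$ is an ordered set (hence certainly a pre-ordered set), which gives condition \ref{enu:DefSoI-preorder} immediately. So the plan is to dispatch the remaining three conditions, two of which are essentially bookkeeping and one of which (the downward-directedness \ref{enu:DefSoI-DonwDir}) is where the content lies.

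First I would check conditions \ref{enu:DefSoI-union} and \ref{enu:DefSoI-intersection} about the filter base $\mathcal{I}^{\erm}=\{\mathcal{A}_q\mid q\in\N\}$. For \ref{enu:DefSoI-union}: each $\mathcal{A}_q$ is non empty (it contains, e.g., a suitable mollifier with vanishing moments, which exist classically), and $I^{\erm}=\mathcal{A}_0\in\mathcal{I}^{\erm}$ by taking $q=0$, so $\emptyset\notin\mathcal{I}^{\erm}$ and $I^{\erm}\in\mathcal{I}^{\erm}$. For \ref{enu:DefSoI-intersection}: given $\mathcal{A}_p,\mathcal{A}_q\in\mathcal{I}^{\erm}$, set $C:=\mathcal{A}_{\max(p,q)}$. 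The moment conditions defining $\mathcal{A}_q$ are nested, i.e.\ $\mathcal{A}_{q'}\subseteq\mathcal{A}_q$ whenever $q'\ge q$, so $C=\mathcal{A}_{\max(p,q)}\subseteq\mathcal{A}_p\cap\mathcal{A}_q$ and $C\in\mathcal{I}^{\erm}$, which is exactly what \ref{enu:DefSoI-intersection} requires.

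The main work is condition \ref{enu:DefSoI-DonwDir}, and here the key observation already recorded in the excerpt does almost everything: since $r\odot\phi\in\mathcal{A}_q$ whenever $\phi\in\mathcal{A}_q$ and $r\in\R_{>0}$, one has $e\in A\in\mathcal{I}^{\erm}\Rightarrow(\emptyset,e]\subseteq A$, so that $A_{\le e}=(\emptyset,e]$ for any admissible $A$. Thus the downward-directedness of $A_{\le e}$ reduces to the downward-directedness of $(\emptyset,e]$, which is precisely Theorem \ref{thm:propSetOfIndicesFull}\,\ref{enu:SoIFull-totallyDown}: that result states $(\emptyset,e]$ is totally ordered and downward directed by $<$ for every $e\in I^{\erm}$. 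In particular it is non empty, and given $b,c\in A_{\le e}=(\emptyset,e]$ the explicit construction there (writing $b=r\odot e$, $c=s\odot e$ and taking $d:=t\odot e$ with $t<\min(r,s)$) produces $d\in(\emptyset,e]$ with $d<b$ and $d<c$, which is exactly \eqref{eq:strictlyDownDirected}.

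The anticipated obstacle is conceptual rather than technical: one must be careful that condition \ref{enu:DefSoI-DonwDir} quantifies over $e\le a\in A$ without requiring $e\in A$, but here the remark that $(\emptyset,e]\subseteq A$ makes this automatic, so no delicate case analysis is needed. Having verified all four conditions, the conclusion that $\mathbb{I}^{\erm}$ is a set of indices follows directly, and I would simply assemble these verifications in order, citing Theorem \ref{thm:propSetOfIndicesFull} for parts \ref{enu:DefSoI-preorder} and \ref{enu:DefSoI-DonwDir} and giving the short filter-base arguments for \ref{enu:DefSoI-union} and \ref{enu:DefSoI-intersection}.
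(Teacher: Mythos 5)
Your proof is correct and follows exactly the route the paper intends: the corollary is stated without proof precisely because condition \ref{enu:DefSoI-preorder} is Theorem \ref{thm:propSetOfIndicesFull}\,\ref{enu:SoIFull-orderedSet}, condition \ref{enu:DefSoI-DonwDir} is Theorem \ref{thm:propSetOfIndicesFull}\,\ref{enu:SoIFull-totallyDown} combined with the paper's remark that $A_{\le e}=(\emptyset,e]$, and conditions \ref{enu:DefSoI-union}--\ref{enu:DefSoI-intersection} are the elementary facts that each $\mathcal{A}_q$ is non empty and that the $\mathcal{A}_q$ are nested. You also correctly dispose of the one subtlety (that \ref{enu:DefSoI-DonwDir} does not require $e\in A$), since $e\le a\in\mathcal{A}_q$ forces $e=r\odot a\in\mathcal{A}_q$ anyway.
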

The following natural result and the limit $\lim_{\eps\le e}\text{int}\left[\text{supp}(\eps)\right]=\emptyset$
justify our notation $(\emptyset,e]$.
\begin{cor}
\label{cor:I^e-iso}For all $e\in I^{\erm}$, the map 
\begin{align*}
\omega:(0,1] & \to(\emptyset,e]\\
r & \mapsto r\odot e
\end{align*}
is an isomorphism of ordered sets.\end{cor}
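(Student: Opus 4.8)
The plan is to verify the three defining properties of an order isomorphism: that $\omega$ takes values in $(\emptyset,e]$, that it is a bijection, and that it preserves and reflects the order, i.e.\ $r\le s \iff \omega(r)\le\omega(s)$. The single algebraic fact driving every step is that $\odot$ is an action of the multiplicative group $(\R_{>0},\cdot,1)$ on $\D(\R^n)$ which is \emph{free} on $\D(\R^n)\setminus\{0\}$ (Lemma~\ref{lem:identityForOperators}\ref{enu:indetOdot}), together with the elementary computation $\int r\odot e = \int e$, so that $r\odot e\in\mathcal{A}_0 = I^{\erm}$ whenever $e\in I^{\erm}$ and $r\in\R_{>0}$.

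First I would check well-definedness: for $r\in(0,1]$ the function $r\odot e$ lies in $I^{\erm}$ by the integral computation above, and $r\odot e\le e$ holds by the very definition of $\le$ in Definition~\ref{def:setOfIndicesFullAlgebra}, since $r\le 1$. Hence $\omega(r)\in(\emptyset,e]$. Surjectivity is then immediate from the equivalent formulation of $\le$: any $\eps\in(\emptyset,e]$ satisfies $\eps\le e$, which by definition means $\eps = r\odot e$ for some $r\in\R_{>0}$ with $r\le 1$, i.e.\ $\eps = \omega(r)$ with $r\in(0,1]$. For injectivity, suppose $r\odot e = s\odot e$ with $r,s\in(0,1]$. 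Acting by $s^{-1}$ and using that $\odot$ is a group action gives $(s^{-1}r)\odot e = e$; since $e\ne 0$ (because $\int e = 1$), freeness forces $s^{-1}r = 1$, that is $r = s$.

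It remains to handle the order in both directions, where again the group-action identity $(ts)\odot e = t\odot(s\odot e)$ and freeness are the tools. For the forward implication, if $r\le s$ in $(0,1]$ then $t:=r/s$ satisfies $0<t\le 1$ and $r\odot e = (ts)\odot e = t\odot(s\odot e)$, so by definition $r\odot e\le s\odot e$. Conversely, if $\omega(r)\le\omega(s)$ then there is $t\le 1$ with $r\odot e = t\odot(s\odot e) = (ts)\odot e$; by the injectivity just established this yields $r = ts$, and since $t\le 1$ and $s>0$ we conclude $r\le s$. Thus $\omega$ and $\omega^{-1}$ are both monotone, completing the proof that $\omega$ is an isomorphism of ordered sets.

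Since the substantive structural facts (that $(I^{\erm},\le)$ is genuinely a partial order and that $(\emptyset,e]$ is totally ordered) have already been established in Theorem~\ref{thm:propSetOfIndicesFull}, the only point requiring care here is to invoke freeness of $\odot$ correctly for injectivity and for the order-reflecting direction; beyond bookkeeping of the scaling parameters no genuine obstacle arises.
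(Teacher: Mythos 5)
Your proof is correct and follows essentially the same route as the paper's: the paper dismisses bijectivity and order-preservation as easy and proves that $\omega^{-1}$ is order-preserving by exactly your computation ($r\odot e = ts\odot e$ with $t\le 1$, whence $r = ts \le s$ by freeness of $\odot$ from Lemma~\ref{lem:identityForOperators}). You merely spell out the ``easy'' parts (well-definedness via $\int r\odot e = \int e$, injectivity, surjectivity) that the paper leaves implicit, including the correct observation that $e\ne 0$ is needed to invoke freeness.
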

\begin{proof}
It is easy to prove that $\omega$ is order preserving and bijective.
It remains to prove that $\omega^{-1}$ is order preserving. Assume
$r\odot e\le s\odot e$, with $r,s\in(0,1]$. Hence $r\odot e=ts\odot e$
for some positive $t\le1$. Therefore, $\frac{ts}{r}=1$ and hence
$r=ts\le s$ as claimed.\end{proof}
\begin{cor}
\label{cor:fullAlgSeqToZero}Let $e\in I^{\erm}$ and $(z_{k})_{k\in\N}$
be a sequence in $(\emptyset,e]$. Let $(x_{\eps})$, $(y_{\eps})\in\R^{I^{\erm}}$
be two nets of real numbers defined in $I^{\erm}$. Then
\begin{enumerate}[%
leftmargin=*,label=(\roman*),align=left %
]
\item \foreignlanguage{british}{\label{enu:zeroSeqInFullAlg}$(z_{k})_{k\in\N}\to\emptyset$
in $(\emptyset,e]$ if and only if $\lim_{k\to+\infty}\underline{z_{k}}=0^{+}$} 
\item \label{enu:zeroSubSeqInFullAlg}From $(z_{k})_{k\in\N}$ we can always
extract a strictly decreasing subsequence which tends to $\emptyset$
in $(\emptyset,e]$ 
\item \label{enu:neg1stBigOhInFullAlg}The asymptotic relation $x_{\eps}=O_{e}(y_{\eps})$
as $\eps\in\mathbb{I}^{\erm}$ is false if and only if \foreignlanguage{english}{for
each $H\in\R_{>0}$ there exists a sequence $(\eps_{k})_{k\in\N}$
of $A_{\le e}$ such that:}

\begin{enumerate}[%
leftmargin=*,align=left %
]
\item \foreignlanguage{british}{$(\eps_{k})_{k\in\N}$ is strictly decreasing }
\item $(\eps_{k})_{k\in\N}\to\emptyset$ \foreignlanguage{english}{in $A_{\le e}$ }
\item $\forall k\in\N:\ \left|x_{\eps_{k}}\right|>H\cdot\left|y_{\eps_{k}}\right|$ 
\end{enumerate}
\end{enumerate}
\end{cor}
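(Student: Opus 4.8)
The whole argument rests on the order isomorphism $\omega\colon(0,1]\to(\emptyset,e]$, $r\mapsto r\odot e$, of Cor.\ \ref{cor:I^e-iso}, together with the elementary scaling identity for supports; the plan is to reduce each assertion to a statement about the real parameters $r\in(0,1]$ and then quote the already-established general results. For part \ref{enu:zeroSeqInFullAlg}, since $\omega$ is a bijection I would first write each term uniquely as $z_k=r_k\odot e$ with $r_k\in(0,1]$, and likewise $\eps_0=s\odot e$ for an arbitrary $\eps_0\in(\emptyset,e]$. Because $\omega$ is an isomorphism of ordered sets, $z_k<\eps_0$ holds exactly when $r_k<s$; as $\eps_0$ ranges over $(\emptyset,e]$ the parameter $s$ ranges over all of $(0,1]$, so Def.\ \ref{def:sequenceTendsToEmptyset} unwinds to: for every $s\in(0,1]$ one eventually has $r_k<s$, i.e.\ $\lim_k r_k=0^+$ (cf.\ the standard set of indices in Ex.\ \ref{exa:seqTendToEmptyset}). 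To convert this into a statement about $\underline{z_k}$ I would compute, directly from $r\odot e(y)=r^{-n}e(y/r)$, that $\text{supp}(r\odot e)=r\cdot\text{supp}(e)$ and hence $\underline{z_k}=r_k\cdot\underline{e}$ with $\underline{e}>0$ fixed; therefore $\lim_k r_k=0^+$ is equivalent to $\lim_k\underline{z_k}=0^+$, which is the claim.

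Part \ref{enu:zeroSubSeqInFullAlg} is then a direct application of Lem.\ \ref{lem:fromSeqToDecresSeq}: its only non-automatic hypothesis is the totality condition \eqref{eq:total} on the ambient set, and this is exactly furnished by Thm.\ \ref{thm:propSetOfIndicesFull}\ref{enu:SoIFull-totallyDown}, which asserts that $(\emptyset,e]$ is totally ordered. Applying the lemma to a sequence tending to $\emptyset$ yields a strictly decreasing subsequence still tending to $\emptyset$.

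For part \ref{enu:neg1stBigOhInFullAlg} I would glue Thm.\ \ref{thm:neg-1stBigOh} to part \ref{enu:zeroSubSeqInFullAlg}. First one needs a seed sequence in $(\emptyset,e]$ tending to $\emptyset$ in order to invoke Thm.\ \ref{thm:neg-1stBigOh}; the natural choice is $z_k:=\frac{1}{k+1}\odot e$, which lies in $(\emptyset,e]$ and satisfies $\underline{z_k}=\underline{e}/(k+1)\to0$, hence tends to $\emptyset$ by part \ref{enu:zeroSeqInFullAlg}. Thm.\ \ref{thm:neg-1stBigOh} then equates the failure of $x_\eps=O_e(y_\eps)$ with the existence, for each $H\in\R_{>0}$, of a sequence $(\eps_k)\to\emptyset$ in $(\emptyset,e]$ satisfying $|x_{\eps_k}|>H\cdot|y_{\eps_k}|$ for all $k$. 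To upgrade $(\eps_k)$ to a strictly decreasing sequence I would apply part \ref{enu:zeroSubSeqInFullAlg} to extract a strictly decreasing subsequence $(\eps_{\sigma_k})$, still tending to $\emptyset$; being a subsequence it inherits the strict inequality index by index, producing precisely the data required. The converse implication is immediate, since a strictly decreasing sequence tending to $\emptyset$ in particular tends to $\emptyset$, so the reverse direction of Thm.\ \ref{thm:neg-1stBigOh} applies verbatim.

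The only genuinely content-bearing step is part \ref{enu:zeroSeqInFullAlg}: the passage between the abstract condition \emph{tends to $\emptyset$} and the concrete limit $\underline{z_k}\to0^+$, which hinges on correctly transporting the order through $\omega$ and on the scaling identity $\underline{r\odot e}=r\,\underline{e}$. Parts \ref{enu:zeroSubSeqInFullAlg} and \ref{enu:neg1stBigOhInFullAlg} are then bookkeeping on top of Lem.\ \ref{lem:fromSeqToDecresSeq} and Thm.\ \ref{thm:neg-1stBigOh}; the one point to watch is that passing to a subsequence preserves the pointwise inequality in part \ref{enu:neg1stBigOhInFullAlg}.
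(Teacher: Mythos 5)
Your proof is correct and follows essentially the same route as the paper's: part \ref{enu:zeroSeqInFullAlg} via the parametrization $z_k=r_k\odot e$ and the scaling identity $\underline{r\odot e}=r\cdot\underline{e}$, and parts \ref{enu:zeroSubSeqInFullAlg} and \ref{enu:neg1stBigOhInFullAlg} by combining the totality of $(\emptyset,e]$ from Thm.~\ref{thm:propSetOfIndicesFull} with Lem.~\ref{lem:fromSeqToDecresSeq} and Thm.~\ref{thm:neg-1stBigOh}. You merely make explicit some details the paper leaves implicit, notably the seed sequence $\frac{1}{k+1}\odot e$ required by the hypothesis of Thm.~\ref{thm:neg-1stBigOh} and the fact that the pointwise inequality survives passing to a subsequence.
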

\begin{proof}
Property \ref{enu:zeroSeqInFullAlg} holds because we can write $z_{k}=r_{k}\odot e$
for a unique $r_{k}\in\R_{>0}$, and $(z_{k})_{k\in\N}$ tends to
$\emptyset$ if and only if $\lim_{k\to+\infty}r_{k}=0^{+}$, i.e.\ if
and only if $\lim_{k\to+\infty}\underline{z_{k}}=0^{+}$.

From \ref{enu:SoIFull-totallyDown} of Thm. \ref{thm:propSetOfIndicesFull},
Lem. \ref{lem:fromSeqToDecresSeq} and Thm. \ref{thm:neg-1stBigOh}
we directly obtain the proof of \ref{enu:zeroSubSeqInFullAlg} and
\ref{enu:neg1stBigOhInFullAlg}.
\end{proof}
For the sake of completeness, we recall the usual notations for $\gse(\Omega)$:
\begin{defn}
\label{def:fullCA}\ 
\begin{enumerate}[%
leftmargin=*,label=(\roman*),align=left %
]
\item \foreignlanguage{british}{$U(\Omega):=\left\{ (\phi,x)\in\mathcal{A}_{0}\times\Omega\mid\supp(\phi)\subseteq\Omega-x\right\} $;} 
\item We say that $R\in\mathcal{E}^{\erm}(\Omega)$ iff $R:U(\Omega)\longrightarrow\R$
and 
\[
\forall\phi\in\mathcal{A}_{0}:\ R(\phi,-)\text{ is smooth on }\Omega\cap\{x\in\R^{n}\mid\supp(\phi)\subseteq\Omega-x\};
\]

\item We say that $R\in\mathcal{E}_{M}^{\erm}(\Omega)$ iff $R\in\mathcal{E}^{\erm}(\Omega)$
and 
\[
\forall K\Subset\Omega\,\forall\alpha\in\N^{n}\,\exists N\in\N\,\forall\phi\in\mathcal{A}_{N}:\ \sup_{x\in K}\left|\partial^{\alpha}R(\eps\odot\phi,x)\right|=O(\eps^{-N});
\]

\item We say that $R\in\mathcal{N}^{\erm}(\Omega)$ iff $R\in\mathcal{E}^{\erm}(\Omega)$
and 
\[
\forall K\Subset\Omega\,\forall\alpha\in\N^{n}\,\forall m\in\N\,\exists q\in\N\,\forall\phi\in\mathcal{A}_{q}:\ \sup_{x\in K}\left|\partial^{\alpha}R(\eps\odot\phi,x)\right|=O(\eps^{m});
\]

\item $\gs^{\erm}(\Omega):=\mathcal{E}_{M}^{\erm}(\Omega)/\mathcal{N}^{\erm}(\Omega)$
is called the \emph{full Colombeau algebra.} 
\end{enumerate}
\end{defn}
\noindent We first give an equivalent characterization of $\mathcal{E}^{\erm}(\Omega)$
as follows: For $\phi\in\mathcal{A}_{0}$, set 
\[
\Omega_{\phi}:=\left\{ x\in\Omega\mid\text{supp}(\phi)\subseteq\Omega-x\right\} .
\]
Note that when $\Omega_{\phi}=\emptyset$, the set $\mathcal{C}^{\infty}(\Omega_{\phi},\R)$
has a single element. If $X$, $Y$ and $Z$ are sets and $f:X\times Y\longrightarrow Z$,
$g:X\longrightarrow Z^{Y}$ are maps, we set 
\begin{align*}
f^{\wedge}:x\in X & \mapsto f(x,-)\in Z^{Y}\\
g^{\vee}:(x,y)\in X\times Y & \mapsto g(x)(y)\in Z.
\end{align*}
The maps $(-)^{\wedge}$ and $(-)^{\vee}$ can be used to express
the property of Cartesian closedness of the category of sets (see
e.g.\ \cite{AHS}), i.e.\ $\left(Z^{Y}\right)^{X}\simeq Z^{X\times Y}$.

Since $R\in\mathcal{E}^{e}(\Omega)$ iff $R^{\wedge}:\mathcal{A}_{0}\longrightarrow\bigcup_{\phi\in\mathcal{A}_{0}}\mathcal{C}^{\infty}(\Omega_{\phi},\R)$
and $R(\phi,-)\in\mathcal{C}^{\infty}(\Omega_{\phi},\R)$ for all
$\phi\in\mathcal{A}_{0}$, $R^{\wedge}\in\prod_{\phi\in\mathcal{A}_{0}}\mathcal{C}^{\infty}(\Omega_{\phi},\R)$.
By Cartesian closedness of the category of sets: 
\begin{equation}
\mathcal{E}^{\erm}(\Omega)\simeq\prod_{\phi\in\mathcal{A}_{0}}\mathcal{C}^{\infty}(\Omega_{\phi},\R).\label{eq:E^e-iso}
\end{equation}
It is also possible to see \eqref{eq:E^e-iso} as a diffeomorphism
of diffeological spaces, see \cite{GiWu14}.

\noindent Since $R\mapsto R^{\wedge}$ is a bijection, we can equivalently
define the full algebra $\gse(\Omega)$ starting from $u\in\prod_{\phi\in\mathcal{A}_{0}}\mathcal{C}^{\infty}(\Omega_{\phi},\R)$
and considering 
\[
u^{\vee}:(\phi,x)\in U(\Omega)\mapsto u(\phi)(x)\in\R.
\]
This motivates the following
\begin{defn}
\noindent \label{def:P(Omega)}$\mathcal{P}^{\erm}(\Omega):=\prod_{\eps\in I^{\erm}}\mathcal{C}^{\infty}(\Omega_{\eps},\R)$.
\end{defn}
\noindent We can say that elements of $\mathcal{P}^{\erm}(\Omega)$
are $I^{\erm}$-indexed nets $(u_{\eps})$ such that $u_{\eps}\in\mathcal{C}^{\infty}(\Omega_{\eps},\R)$.
The following theorem represents the unifying and simplifying capabilities
of the notions of set of indices and its asymptotic relations. It
underscores, also for the full Colombeau algebra, the importance of
the logical structure $\forall K\,\forall\alpha\,\exists N$, $\forall K\,\forall\alpha\,\forall m$
and the use of an asymptotic relation as $\underline{\eps}\to0$.
\begin{thm}
\noindent \label{thm:fullEquiv}Let $u=(u_{\eps})\in\mathcal{P}^{\erm}(\Omega)$,
then 
\begin{enumerate}[%
leftmargin=*,label=(\roman*),align=left %
]
\item \foreignlanguage{british}{\label{enu:fullEquiv-moderate}$u^{\vee}\in\mathcal{E}_{M}^{\erm}(\Omega)$
if and only if 
\[
\forall K\Subset\Omega\,\forall\alpha\in\N^{n}\,\exists N\in\N:\ \sup_{x\in K}\left|\partial^{\alpha}u_{\eps}(x)\right|=O\left(\underline{\eps}^{-N}\right)\text{ as }\eps\in\mathbb{I}^{\erm}
\]
}\lyxdeleted{}{Fri Jul  4 15:14:47 2014}{\foreignlanguage{british}{
}} 
\item \label{enu:fullEquiv-negligible}$u^{\vee}\in\mathcal{N}^{\erm}(\Omega)$
if and only if 
\[
\forall K\Subset\Omega\,\forall\alpha\in\N^{n}\,\forall m\in\N:\ \sup_{x\in K}\left|\partial^{\alpha}u_{\eps}(x)\right|=O\left(\underline{\eps}^{m}\right)\text{ as }\eps\in\mathbb{I}^{\erm}
\]

\end{enumerate}
\end{thm}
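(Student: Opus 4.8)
The plan is to reduce both statements to purely classical asymptotics in the real scaling parameter and then to match up quantifiers. First I would record the two reductions that make this possible. Since $u=(u_\eps)\in\mathcal{P}^{\erm}(\Omega)$ means $u_\eps\in\mathcal{C}^\infty(\Omega_\eps,\R)$ for each $\eps\in I^{\erm}$, the map $u^\vee(\phi,-)=u_\phi$ is smooth on $\Omega_\phi$ for every $\phi$, so $u^\vee\in\mathcal{E}^{\erm}(\Omega)$ holds automatically and only the asymptotic conditions of Def.~\ref{def:fullCA} remain to be checked. Writing $R=u^\vee$ we have $R(r\odot\phi,x)=u_{r\odot\phi}(x)$, so with $F_\phi(r):=\sup_{x\in K}\left|\partial^\alpha u_{r\odot\phi}(x)\right|$ the classical moderateness (resp.\ negligibility) estimate reads $F_\phi(r)=O(r^{-N})$ (resp.\ $F_\phi(r)=O(r^m)$) as $r\to0^+$.

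Next I would unwind the right-hand sides. By Def.~\ref{def:2ndBigOh} applied with $\mathcal{J}=\mathcal{I}^{\erm}=\{\mathcal{A}_q\mid q\in\N\}$, the relation $\sup_{x\in K}\left|\partial^\alpha u_\eps(x)\right|=O(\underline{\eps}^{-N})$ as $\eps\in\mathbb{I}^{\erm}$ unfolds to
\[
\exists q\in\N\,\forall\phi\in\mathcal{A}_q:\ \sup_{x\in K}\left|\partial^\alpha u_\eps(x)\right|=O_\phi(\underline{\eps}^{-N}).
\]
Applying Thm.~\ref{thm:propSetOfIndicesFull}~\ref{enu:SoIFull-equiv} for each $\phi$, together with the identity $\underline{r\odot\phi}=r\,\underline{\phi}$ (the support of $r\odot\phi$ is $r\cdot\supp(\phi)$) and the fact that the positive constant $\underline{\phi}^{-N}$ does not change the big-O class, this becomes $\exists q\,\forall\phi\in\mathcal{A}_q:\ F_\phi(r)=O(r^{-N})$ as $r\to0^+$; likewise the negligibility side becomes $\exists q\,\forall\phi\in\mathcal{A}_q:\ F_\phi(r)=O(r^m)$.

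For the negligibility statement \ref{enu:fullEquiv-negligible} the proof is then essentially complete: the classical condition $\forall K\,\forall\alpha\,\forall m\,\exists q\,\forall\phi\in\mathcal{A}_q:\ F_\phi(r)=O(r^m)$ and the unwound right-hand side are literally the same sentence, so the equivalence is purely a matter of the translation above.

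The only genuine work is in the moderateness statement \ref{enu:fullEquiv-moderate}, where the two formulations differ: the classical definition couples the exponent and the moment order in a single $N$ (via $\forall\phi\in\mathcal{A}_N$), whereas the unwound right-hand side decouples them ($\exists q\,\forall\phi\in\mathcal{A}_q$, with $N$ the exponent). I would settle this using the two monotonicities $\mathcal{A}_q\subseteq\mathcal{A}_{q'}$ whenever $q\ge q'$ (vanishing of more moments is a stronger requirement) and $O(r^{-N})\subseteq O(r^{-N'})$ whenever $N\le N'$ (as $r\to0^+$). For the forward direction, given $K,\alpha$ the classical definition produces an $N$ valid for all $\phi\in\mathcal{A}_N$; choosing $q:=N$ yields the right-hand side with the same exponent $N$. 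For the converse, given $K,\alpha$ the right-hand side produces an exponent $N$ and a moment order $q$; setting $N':=\max(N,q)$, every $\phi\in\mathcal{A}_{N'}$ lies in $\mathcal{A}_q$ (so $F_\phi(r)=O(r^{-N})$) and $O(r^{-N})\subseteq O(r^{-N'})$, which is exactly the classical estimate with the single index $N'$. The point to be careful about is precisely this interplay between the two monotonicities; everything else is the unwinding of Def.~\ref{def:2ndBigOh} and the application of Thm.~\ref{thm:propSetOfIndicesFull}~\ref{enu:SoIFull-equiv}.
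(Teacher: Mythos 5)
Your proposal is correct and takes essentially the same route as the paper: you unwind $O_{\mathcal{I}^{\erm}}$ via Def.~\ref{def:2ndBigOh} into $\exists q\,\forall\phi\in\mathcal{A}_{q}:\ O_{\phi}$, translate each $O_{\phi}$ into a classical big-O in $r$ via Thm.~\ref{thm:propSetOfIndicesFull}~\ref{enu:SoIFull-equiv} and $\underline{r\odot\phi}=r\,\underline{\phi}$, observe that negligibility then matches verbatim, and resolve the moderateness coupling of $N$ with $\mathcal{A}_{N}$ by monotonicity. Your single choice $N':=\max(N,q)$ merely merges into one step the paper's two-case analysis ($q\le N$, using $\mathcal{A}_{N}\subseteq\mathcal{A}_{q}$, versus $q>N$, using $O(r^{-N})\subseteq O(r^{-q})$), relying on exactly the same two monotonicities.
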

\begin{proof}
\noindent \ref{enu:fullEquiv-negligible}: Fix $K\Subset\Omega$,
$\alpha\in\N^{n}$ and $m\in\N$. By Def. \ref{def:2ndBigOh} of $O=O_{\mathcal{I}^{\erm}}$,
the condition 
\begin{equation}
\sup_{x\in K}\left|\partial^{\alpha}u_{\eps}(x)\right|=O\left(\underline{\eps}^{m}\right)\label{eq:fullEquiv-supNegl}
\end{equation}
 means 
\[
\exists q\in\N\,\forall\phi\in\mathcal{A}_{q}:\ \sup_{x\in K}\left|\partial^{\alpha}u_{\eps}(x)\right|=O_{\phi}\left(\underline{\eps}^{m}\right).
\]
 By \ref{enu:SoIFull-equiv} of Thm. \ref{thm:propSetOfIndicesFull},
this is equivalent to 
\[
\exists q\in\N\,\forall\phi\in\mathcal{A}_{q}:\ \sup_{x\in K}\left|\partial^{\alpha}u^{\vee}(r\odot\phi,x)\right|=O\left[\left(\underline{r\odot\phi}\right)^{m}\right]\text{ as }r\to0^{+}.
\]
 But if $\phi\ne0$, $\underline{r\odot\phi}=\text{diam}\left(\text{supp}(r\odot\phi)\right)=r\cdot\text{diam}\left(\text{supp}(\phi)\right)=:r\cdot H_{\phi}$,
and the same equality holds also if $\phi=0$ if we set $H_{\phi}:=\frac{1}{r}$.
Thus $O\left[\left(\underline{r\odot\phi}\right)^{m}\right]=O\left(r^{m}\cdot H_{\phi}^{m}\right)=O\left(r^{m}\right)$.
Therefore, \eqref{eq:fullEquiv-supNegl} is equivalent to 
\[
\exists q\in\N\,\forall\phi\in\mathcal{A}_{q}:\ \sup_{x\in K}\left|\partial^{\alpha}u^{\vee}(r\odot\phi,x)\right|=O\left(r^{m}\right)\text{ as }r\to0^{+}
\]
 as claimed.

\ref{enu:fullEquiv-moderate}: Fix $K\Subset\Omega$ and $\alpha\in\N^{n}$.
We firstly need to reformulate the condition 
\begin{equation}
\exists N\in\N\,\forall\phi\in\mathcal{A}_{N}:\ \sup_{x\in K}\left|\partial^{\alpha}u^{\vee}(r\odot\phi,x)\right|=O\left(r^{-N}\right)\text{ as }r\to0^{+}\label{eq:fullEquiv-moder1}
\end{equation}
 so that the term $r^{-N}$ doesn't depend on $N$, which appears
also in $\forall\phi\in\mathcal{A}_{N}$. We can consider 
\begin{equation}
\exists N,q\in\N\,\forall\phi\in\mathcal{A}_{q}:\ \sup_{x\in K}\left|\partial^{\alpha}u^{\vee}(r\odot\phi,x)\right|=O\left(r^{-N}\right)\text{ as }r\to0^{+}.\label{eq:fullEquiv-moder2}
\end{equation}
 In fact, \eqref{eq:fullEquiv-moder1} implies \eqref{eq:fullEquiv-moder2}
for logical reasons (the former is a particular case of the latter,
the one where $q=N$). Vice versa, assuming \eqref{eq:fullEquiv-moder2},
we have 
\[
\forall\phi\in\mathcal{A}_{q}:\ \sup_{x\in K}\left|\partial^{\alpha}u^{\vee}(r\odot\phi,x)\right|=O\left(r^{-N}\right)\text{ as }r\to0^{+}
\]
 for some $q,N\in\N$. If $q\le N$, we get \eqref{eq:fullEquiv-moder1}
from $\mathcal{A}_{q}\supseteq\mathcal{A}_{N}$. If $q>N$, then $r^{-q}>r^{-N}$
for $0<r<1$, so $y_{r}=O\left(r^{-N}\right)$ implies $y_{r}=O\left(r^{-q}\right)$
and we obtain \eqref{eq:fullEquiv-moder1} once again. Thus, the following
part of \eqref{eq:fullEquiv-moder2} 
\[
\exists q\in\N\,\forall\phi\in\mathcal{A}_{q}:\ \sup_{x\in K}\left|\partial^{\alpha}u^{\vee}(r\odot\phi,x)\right|=O\left(r^{-N}\right)\text{ as }r\to0^{+},
\]
 as proved above, can be equivalently written as 
\[
\sup_{x\in K}\left|\partial^{\alpha}u_{\eps}(x)\right|=O\left(\underline{\eps}^{-N}\right)\text{ as }\eps\in\mathbb{I}^{\erm}.\qedhere
\]

\end{proof}

\section{Big-O for uniform asymptotic relations}

Frequently, the asymptotic relation 
\[
\sup_{x\in K}\left|\partial^{\alpha}u_{\eps}(x)\right|=O\left(\eps^{-N}\right)\text{ as }\eps\to0^{+}
\]
 is expressed by saying that: $\partial^{\alpha}u_{\eps}(x)=O\left(\eps^{-N}\right)$
uniformly for $x\in K$. With this we mean 
\[
\exists H\in\R_{>0}\,\exists\eps_{0}\in(0,1]\,\forall\eps\in(0,\eps_{0}]\,\forall x\in K:\ \left|\partial^{\alpha}u_{\eps}(x)\right|\le H\cdot\eps^{-N}.
\]
 In this section, we want to see that this is a general possibility
in every set of indices. On the one hand, this will permit a further
simplification in our formulas, but on the other hand it hides the
choice of the particular seminorm $f\mapsto\sup_{x\in K}\left|\partial^{\alpha}f(x)\right|$
we are considering in these algebras.
\begin{defn}
\label{def:unifAsymptRel}Let $\mathbb{I}=(I,\le,\mathcal{I})$ be
a set of indices, $K\subseteq\R^{n}$ and $(x_{\eps})$, $(y_{\eps}):I\longrightarrow\R^{K}$.
Let $a\in A\in\mathcal{I}$ and $\mathcal{J}\subseteq\mathcal{I}$
be a non empty subset of $\mathcal{I}$ such that 
\[
\forall A,B\in\mathcal{J}\,\exists C\in\mathcal{J}:\ C\subseteq A\cap B.
\]
 Then:
\begin{enumerate}[%
leftmargin=*,label=(\roman*),align=left %
]
\item \foreignlanguage{british}{We say that $x_{\eps}=O_{a,A}^{K}(y_{\eps})$
as $\eps\in\mathbb{I}$ if 
\[
\exists H\in\R_{>0}\,\exists\eps_{0}\in A_{\le a}\,\forall\eps\in A_{\le\eps_{0}}\,\forall x\in K:\ |x_{\eps}(x)|\le H\cdot|y_{\eps}(x)|.
\]
}
\item We say that $x_{\eps}=O_{\mathcal{J}}^{K}(y_{\eps})$ as $\eps\in\mathbb{I}$
if $\exists A\in\mathcal{J}\,\forall a\in A:\ x_{\eps}=O_{a,A}^{K}(y_{\eps})$.
As above, we simply write $x_{\eps}=O^{K}(y_{\eps})$ (as $\eps\in\mathbb{I}$)
if $\mathcal{J}=\mathcal{I}$.
\end{enumerate}
\end{defn}
\begin{thm}
\label{thm:unifBigOhProp}Under the assumptions of Def. \ref{def:unifAsymptRel},
for both $O_{a,A}^{K}$ and $O_{\mathcal{J}}^{K}$ all the properties
of Thm. \ref{thm:usualBigOh-prop} and Thm. \ref{thm:2ndBigOh-prop}
hold.\end{thm}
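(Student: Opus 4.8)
The plan is to reduce the uniform relations on $\mathbb{I}$ to the ordinary (non-uniform) relations of Def.~\ref{def:usualBigOh} and Def.~\ref{def:2ndBigOh} on a suitable \emph{product} set of indices, so that Thm.~\ref{thm:usualBigOh-prop} and Thm.~\ref{thm:2ndBigOh-prop} can be invoked verbatim and no estimate has to be redone by hand. We may assume $K\ne\emptyset$, since otherwise every condition $\forall x\in K$ is vacuous and all relations trivially hold. On $I\times K$ I would put the pre-order pulled back from $I$ along the first projection, namely $(\eps,x)\preceq(\eps',x')$ iff $\eps\le\eps'$ (so $K$ carries the trivial pre-order), and set $\mathcal{I}\times K:=\{A\times K\mid A\in\mathcal{I}\}$ and $\mathbb{I}\times K:=(I\times K,\preceq,\mathcal{I}\times K)$.

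First I would check that $\mathbb{I}\times K$ is a set of indices. Conditions \ref{enu:DefSoI-preorder}--\ref{enu:DefSoI-intersection} of Def.~\ref{def:setOfIndices} are immediate: $\preceq$ is reflexive and transitive; $\emptyset\notin\mathcal{I}\times K$ because $A\ne\emptyset\ne K$; $I\times K\in\mathcal{I}\times K$; and $(A\times K)\cap(B\times K)=(A\cap B)\times K\supseteq C\times K$ whenever $C\subseteq A\cap B$ with $C\in\mathcal{I}$. The only point needing care, and the main obstacle, is the strict downward-directedness \ref{enu:DefSoI-DonwDir}, because the strict order on the product is $(\eps,x)\prec(\eps',x')$ iff $\eps\le\eps'$ and $(\eps,x)\ne(\eps',x')$, which is \emph{not} the same as $\eps<\eps'$. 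For $(e,y)\preceq(a,z)\in A\times K$ one has $(A\times K)_{\preceq(e,y)}=A_{\le e}\times K$; given $(\eps_1,x_1),(\eps_2,x_2)$ in this set, downward-directedness of $A_{\le e}$ in $\mathbb{I}$ yields $\delta\in A_{\le e}$ with $\delta<\eps_1$ and $\delta<\eps_2$, whence $(\delta,x_1)\prec(\eps_i,x_i)$ for $i=1,2$, since $\delta\le\eps_i$ while $\delta\ne\eps_i$ already forces $(\delta,x_1)\ne(\eps_i,x_i)$ irrespective of the $K$-coordinates.

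Next I would set up the dictionary. For $(x_\eps)\colon I\to\R^K$ define $\tilde{x}\in\R^{I\times K}$ by $\tilde{x}_{(\eps,x)}:=x_\eps(x)$; the pointwise operations on $\R^K$-valued nets (products, sums, absolute values, scalar multiples, and the sign hypotheses of \ref{enu:1stBigOh-sumExt}/\ref{enu:2ndBigOh-sumExt}) correspond exactly to the same operations on the $\R$-valued nets $\tilde{x}$. Comparing the defining formulas and using $(A\times K)_{\preceq(\eps_0,x_0)}=A_{\le\eps_0}\times K$, one checks that for any $x_*\in K$
\[
x_\eps=O_{a,A}^{K}(y_\eps)\text{ as }\eps\in\mathbb{I}\iff\tilde{x}=O_{(a,x_*),A\times K}(\tilde{y})\text{ as }(\eps,x)\in\mathbb{I}\times K,
\]
the quantifier $\exists x_0\in K$ appearing on the right being redundant since the body does not involve $x_0$; in particular $O_{(a,x_*),A\times K}$ does not depend on $x_*$. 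Taking $\mathcal{J}\times K:=\{A\times K\mid A\in\mathcal{J}\}$, which is non-empty and satisfies \eqref{eq:hypJ} for $\mathbb{I}\times K$, the same comparison gives $x_\eps=O_{\mathcal{J}}^{K}(y_\eps)$ iff $\tilde{x}=O_{\mathcal{J}\times K}(\tilde{y})$.

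Finally I would conclude: properties \ref{enu:1stBigOh-rifl}--\ref{enu:1stBigOh-subsetA} of Thm.~\ref{thm:usualBigOh-prop}, applied to $\mathbb{I}\times K$ at the index $(a,x_*)\in A\times K\in\mathcal{I}\times K$ (and, for \ref{enu:1stBigOh-subsetA}, noting $B\times K\subseteq A\times K$), translate back through the dictionary into the corresponding properties of $O_{a,A}^{K}$; likewise properties \ref{enu:2ndBigOh-rifl}--\ref{enu:2ndBigOh-prodScal2} of Thm.~\ref{thm:2ndBigOh-prop} for $O_{\mathcal{J}\times K}$ translate into those of $O_{\mathcal{J}}^{K}$. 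Since the algebraic operations commute with $x\mapsto\tilde{x}$, this completes the argument. (Alternatively, and less economically, one may simply observe that each proof in Thm.~\ref{thm:usualBigOh-prop} and Thm.~\ref{thm:2ndBigOh-prop} combines the defining inequalities pointwise, with constants and thresholds chosen independently of the running point, so that inserting $\forall x\in K$ throughout leaves every argument intact.)
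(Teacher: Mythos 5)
Your argument is correct, but it takes a genuinely different route from the paper's. The paper settles the theorem in two lines: either repeat the proofs of Thm.~\ref{thm:usualBigOh-prop} and Thm.~\ref{thm:2ndBigOh-prop} with the extra $\forall x\in K$ carried along, or observe that the pointwise order $f\le g\iff\forall x\in K:\ f(x)\le g(x)$ on $\R^{K}$ inherits from $\R$ all the properties those proofs use --- which is essentially your closing parenthetical. Your main argument instead upgrades this to a formal transfer principle: you verify that $\mathbb{I}\times K$ (pre-order pulled back along the first projection, $\mathcal{I}\times K=\left\{ A\times K\mid A\in\mathcal{I}\right\}$) is itself a set of indices, and that under $\tilde{x}_{(\eps,x)}=x_{\eps}(x)$ the uniform relations $O_{a,A}^{K}$ and $O_{\mathcal{J}}^{K}$ on $\mathbb{I}$ coincide with the plain relations $O_{(a,x_{*}),A\times K}$ and $O_{\mathcal{J}\times K}$ on $\mathbb{I}\times K$, so that the two earlier theorems apply as black boxes. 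The one genuinely delicate point --- that condition \ref{enu:DefSoI-DonwDir} of Def.~\ref{def:setOfIndices} survives the product, given that the induced strict relation $(\eps,x)\prec(\eps',x')$ (meaning $\eps\le\eps'$ and $(\eps,x)\ne(\eps',x')$) is weaker than $\eps<\eps'$ --- you identify and resolve correctly, since $\delta\ne\eps_{i}$ already forces inequality of the pairs; and you rightly quarantine $K=\emptyset$, where $\mathcal{I}\times K$ would contain $\emptyset$ but every uniform relation holds vacuously (non-emptiness of $A_{\le a}$, guaranteed by \ref{enu:DefSoI-DonwDir}, supplies the required $\eps_{0}$). What your route buys is reusability and a conceptual point: uniformity over $K$ is not a new asymptotic notion but literally an instance of the old one over an enlarged index set, so further results about the plain big-O (for instance negation criteria in the style of Thm.~\ref{thm:neg-1stBigOh}, to the extent their hypotheses lift to the product) transfer for free. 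What it costs is the overhead of checking the set-of-indices axioms for $\mathbb{I}\times K$, which the paper's pointwise-order remark avoids entirely.
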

\begin{proof}
We can prove this result repeating the proofs of Thm. \ref{thm:usualBigOh-prop}
and Thm. \ref{thm:2ndBigOh-prop}, or noting that the order relation
on $\R^{K}$ given by 
\[
f\le g\DIff\forall x\in K:\ f(x)\le g(x)
\]
 inherits from the usual order relation on $\R$ all the properties
we need.\end{proof}
\begin{cor}
\label{cor:fullWithAsymptBigOh}Let $u=(u_{\eps})\in\mathcal{P}^{\erm}(\Omega)$,
then
\begin{enumerate}[%
leftmargin=*,label=(\roman*),align=left %
]
\item \foreignlanguage{british}{\label{enu:fullWithAsymptBigOh-moderate}$u^{\vee}\in\mathcal{E}_{M}^{\erm}(\Omega)$
if and only if 
\[
\forall K\Subset\Omega\,\forall\alpha\in\N^{n}\,\exists N\in\N:\ \partial^{\alpha}u_{\eps}=O^{K}\left(\underline{\eps}^{-N}\right)\text{ as }\eps\in\mathbb{I}^{\erm}
\]
}\lyxdeleted{}{Fri Jul  4 15:14:47 2014}{\foreignlanguage{british}{
}} 
\item \label{enu:fullWithAsymptBigOh-negligible}$u^{\vee}\in\mathcal{N}^{\erm}(\Omega)$
if and only if 
\[
\forall K\Subset\Omega\,\forall\alpha\in\N^{n}\,\forall m\in\N:\ \partial^{\alpha}u_{\eps}=O^{K}\left(\underline{\eps}^{m}\right)\text{ as }\eps\in\mathbb{I}^{\erm}
\]

\end{enumerate}
\end{cor}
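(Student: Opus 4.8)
The plan is to reduce the statement entirely to Theorem~\ref{thm:fullEquiv}, whose two conditions differ from those of the present corollary only in that they employ the ordinary big-$O$ of the supremum $\sup_{x\in K}\left|\partial^{\alpha}u_{\eps}(x)\right|$ in place of the uniform relation $\partial^{\alpha}u_{\eps}=O^{K}(\,\cdot\,)$. The whole argument therefore rests on a single elementary observation: when the majorising net does not depend on the spatial variable, the uniform big-$O$ coincides with the ordinary big-$O$ of the supremum.

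Concretely, first I would fix $K\Subset\Omega$ and $\alpha\in\N^{n}$ and establish the following auxiliary equivalence. Let $(v_{\eps})\colon I^{\erm}\longrightarrow\R^{K}$ be arbitrary and let $(w_{\eps})\in\R^{I^{\erm}}$, regarded as the net of constant functions $x\in K\mapsto w_{\eps}\in\R$. Then for every $a\in A\in\mathcal{I}^{\erm}$ one has
\[
v_{\eps}=O^{K}_{a,A}(w_{\eps})\quad\Longleftrightarrow\quad\sup_{x\in K}\left|v_{\eps}(x)\right|=O_{a,A}(w_{\eps}).
\]
This is immediate from Definition~\ref{def:unifAsymptRel} and \eqref{eq:1stBigOhDef}: since $|w_{\eps}|$ is independent of $x$, the clause $\forall x\in K\colon|v_{\eps}(x)|\le H\cdot|w_{\eps}|$ is literally the same as $\sup_{x\in K}|v_{\eps}(x)|\le H\cdot|w_{\eps}|$, so the two inner estimates agree verbatim and the outer quantifiers over $H$ and $\eps_{0}$ match.

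Next I would lift this to the second big-$O$. Both $O^{K}=O^{K}_{\mathcal{I}^{\erm}}$ and $O=O_{\mathcal{I}^{\erm}}$ are defined by the same outer quantifier pattern $\exists A\in\mathcal{I}^{\erm}\,\forall a\in A$ (Definitions~\ref{def:2ndBigOh} and~\ref{def:unifAsymptRel}). Applying the auxiliary equivalence inside this pattern yields, for any spatially constant majorant $(w_{\eps})$,
\[
v_{\eps}=O^{K}(w_{\eps})\quad\Longleftrightarrow\quad\sup_{x\in K}\left|v_{\eps}(x)\right|=O(w_{\eps})\qquad\text{as }\eps\in\mathbb{I}^{\erm}.
\]

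Finally I would instantiate $v_{\eps}:=\partial^{\alpha}u_{\eps}$ together with $w_{\eps}:=\underline{\eps}^{-N}$ for part~\ref{enu:fullWithAsymptBigOh-moderate} and $w_{\eps}:=\underline{\eps}^{m}$ for part~\ref{enu:fullWithAsymptBigOh-negligible}; both majorants are genuinely independent of $x$, so the hypothesis of the equivalence is met. Quantifying over $K$, $\alpha$ and (respectively) $N$ or $m$ then turns each condition of the corollary into the corresponding condition of Theorem~\ref{thm:fullEquiv}, and the conclusion follows at once. I expect no real obstacle: the only point requiring a word of care is the harmless identification of the constant $\underline{\eps}^{-N}$, $\underline{\eps}^{m}$ with a constant element of $\R^{K}$, which is precisely what lets the supremum pass through the inequality. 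One could alternatively bypass the auxiliary equivalence entirely and re-run the proof of Theorem~\ref{thm:fullEquiv} using the order on $\R^{K}$ whose good behaviour is guaranteed by Theorem~\ref{thm:unifBigOhProp}, but the reduction above is shorter.
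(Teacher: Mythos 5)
Your reduction is correct and is exactly the argument the paper intends: Corollary \ref{cor:fullWithAsymptBigOh} is stated without proof as an immediate consequence of Theorem \ref{thm:fullEquiv}, resting on the observation (made explicit in the paragraph introducing Definition \ref{def:unifAsymptRel}) that when the majorant is independent of $x\in K$, the uniform relation $O^{K}$ coincides with the ordinary big-$O$ of $\sup_{x\in K}\left|\partial^{\alpha}u_{\eps}(x)\right|$. Your auxiliary equivalence and its lift through the $\exists A\,\forall a$ quantifier pattern simply write out that identification in full detail.
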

Applying the uniform asymptotic relation to the algebra $\hat{\gs}$
of asymptotic functions, we obtain
\begin{cor}
\label{cor:NSAAsymptBigOh}Let $(u_{\eps})\in\mathcal{C}^{\infty}(\Omega,\R)$
be a net of smooth functions.
\begin{enumerate}[%
leftmargin=*,label=(\roman*),align=left %
]
\item \foreignlanguage{british}{$(u_{\eps})\in\mathcal{M}\left(\mathcal{E}(\Omega)^{\D_{0}}\right)$
if and only if 
\[
\forall K\Subset\Omega\,\forall\alpha\in\N^{n}\,\exists N\in\N:\ \partial^{\alpha}u_{\eps}=O^{K}(\underline{\eps}^{-N})\text{ as }\eps\in\hat{\mathbb{I}}
\]
}\lyxdeleted{}{Fri Jul  4 15:14:47 2014}{\foreignlanguage{british}{
}} 
\item $(u_{\eps})\in\mathcal{N}\left(\mathcal{E}(\Omega)^{\D_{0}}\right)$
if and only if 
\[
\forall K\Subset\Omega\,\forall\alpha\in\N^{n}\,\forall m\in\N:\ \partial^{\alpha}u_{\eps}=O^{K}(\underline{\eps}^{m})\text{ as }\eps\in\hat{\mathbb{I}}.
\]

\end{enumerate}
\end{cor}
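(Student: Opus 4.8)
The plan is to deduce this directly from Corollary \ref{cor:specialAlgebra}, whose parts (iii) and (iv) already characterize membership in $\mathcal{M}\left(\mathcal{E}(\Omega)^{\D_{0}}\right)$ and $\mathcal{N}\left(\mathcal{E}(\Omega)^{\D_{0}}\right)$ in terms of the supremum over $K$ and the ordinary big-O $O$, as $\eps\in\hat{\mathbb{I}}$. The only gap between those statements and the present ones is the replacement of $\sup_{x\in K}\left|\partial^{\alpha}u_{\eps}(x)\right|=O(\cdots)$ by the uniform relation $\partial^{\alpha}u_{\eps}=O^{K}(\cdots)$. So I would isolate, once and for all, the elementary principle that for a family which is \emph{constant in the spatial variable} these two relations coincide, and then simply invoke it twice. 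This is exactly the mechanism already used to pass from Theorem \ref{thm:fullEquiv} to Corollary \ref{cor:fullWithAsymptBigOh}.

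More precisely, fix $K\Subset\Omega$ and $\alpha\in\N^{n}$, and let $(c_{\eps})\in\R^{\hat{I}}$ be any net, viewed as the family of constant functions $x\in K\mapsto c_{\eps}$. I claim that, as $\eps\in\hat{\mathbb{I}}$,
\[
\partial^{\alpha}u_{\eps}=O^{K}(c_{\eps})\iff\sup_{x\in K}\left|\partial^{\alpha}u_{\eps}(x)\right|=O(c_{\eps}).
\]
Both $O^{K}$ and $O$ here mean $O^{K}_{\hat{\mathcal{I}}}$ and $O_{\hat{\mathcal{I}}}$, so it suffices to prove the corresponding equivalence for each fixed $a\in A\in\hat{\mathcal{I}}$, namely $\partial^{\alpha}u_{\eps}=O^{K}_{a,A}(c_{\eps})\iff\sup_{x\in K}\left|\partial^{\alpha}u_{\eps}(x)\right|=O_{a,A}(c_{\eps})$, and then prepend the identical quantifier block $\exists A\,\forall a$ on both sides. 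For the fixed-$(a,A)$ statement, unravelling Definitions \ref{def:unifAsymptRel} and \ref{def:usualBigOh} shows that the surrounding quantifiers $\exists H\in\R_{>0}\,\exists\eps_{0}\in A_{\le a}\,\forall\eps\in A_{\le\eps_{0}}$ are the same, and the only difference between the inner clauses is that the uniform one reads $\forall x\in K:\ \left|\partial^{\alpha}u_{\eps}(x)\right|\le H\cdot\left|c_{\eps}\right|$ whereas the other reads $\sup_{x\in K}\left|\partial^{\alpha}u_{\eps}(x)\right|\le H\cdot\left|c_{\eps}\right|$. Since $\left|c_{\eps}\right|$ carries no dependence on $x$, these two clauses are equivalent by the very definition of the supremum, and the claim follows.

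Finally I would apply this claim with $c_{\eps}=\underline{\eps}^{-N}$ for part (i) and $c_{\eps}=\underline{\eps}^{m}$ for part (ii): in each case $c_{\eps}$ is independent of $x$, so the right-hand conditions of the present corollary become term-by-term equivalent to those of Corollary \ref{cor:specialAlgebra}(iii) and (iv). This translates the uniform characterizations into the sup-characterizations already known to describe $\mathcal{M}\left(\mathcal{E}(\Omega)^{\D_{0}}\right)$ and $\mathcal{N}\left(\mathcal{E}(\Omega)^{\D_{0}}\right)$, finishing the proof. I do not expect any genuine obstacle here; the only point requiring (minor) care is to record that $\underline{\eps}^{-N}$ and $\underline{\eps}^{m}$ do not depend on the spatial variable, which is precisely what allows the supremum to move freely across the estimate.
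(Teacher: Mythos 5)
Your proposal is correct and is essentially the paper's own (implicit) argument: the paper states Corollary \ref{cor:NSAAsymptBigOh} without proof, presenting it as the immediate result of ``applying the uniform asymptotic relation'' to the sup-norm characterizations in Corollary \ref{cor:specialAlgebra}(iii)--(iv), which is exactly your route. Your only addition is to spell out the elementary equivalence $\partial^{\alpha}u_{\eps}=O^{K}_{a,A}(c_{\eps})\iff\sup_{x\in K}\left|\partial^{\alpha}u_{\eps}(x)\right|=O_{a,A}(c_{\eps})$ for $x$-independent nets $c_{\eps}$, which the paper leaves tacit and which is correct as you argue.
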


\section{diffeomorphism invariant algebras}

We can use the notion of set of indices to simplify the definitions
of the diffeomorphism invariant algebra $\gsd$. We will see a simpler
formulation, but not a unifying one. On the contrary, this reformulation
underscores some conceptual differences between $\gss$, $\gse$,
$\hat{\gs}$ on the one hand and $\gsd$ on the other hand.

We start by recalling the following
\begin{defn}
\label{def:diffInvCA}\ 
\begin{enumerate}[%
leftmargin=*,label=(\roman*),align=left %
]
\item \foreignlanguage{british}{$\mathcal{E}^{C}(\Omega):=\mathcal{C}^{\infty}(U(\Omega),\R)$.} 
\item We say $\phi\in\mathcal{C}_{\brm}^{\infty}((0,1]\times\Omega,\mathcal{A}_{q})$
if and only if $\phi\in\mathcal{C}^{\infty}((0,1]\times\Omega,\mathcal{A}_{q})$
and 
\[
\forall K\Subset\Omega\,\exists B\subseteq\R^{n}\text{ bounded}\,\forall\alpha\in\N^{n}\,\forall\eps\in(0,1]\,\forall x\in K:\ \text{supp}\left[\partial^{\alpha}\phi(\eps,x)\right]\subseteq B.
\]

\item We say $R\in\mathcal{E}_{M}^{C}(\Omega)$ if and only if $R\in\mathcal{E}^{C}(\Omega)$
and 
\begin{multline*}
\forall K\Subset\Omega\,\forall\alpha\in\N^{n}\,\exists N\in\N\,\forall\phi\in\mathcal{C}_{\brm}^{\infty}((0,1]\times\Omega,\mathcal{A}_{0}):\\
\sup_{x\in K}\left|\partial_{x}^{\alpha}R\left(\eps\odot\phi(\eps,x),x\right)\right|=O\left(\eps^{-N}\right).
\end{multline*}

\item We say $R\in\mathcal{N}^{C}(\Omega)$ if and only if $R\in\mathcal{E}_{M}^{C}(\Omega)$
and 
\begin{multline*}
\forall K\Subset\Omega\,\forall\alpha\in\N^{n}\,\forall m\in\N\,\exists q\in\N\,\forall\phi\in\mathcal{C}_{\brm}^{\infty}((0,1]\times\Omega,\mathcal{A}_{q}):\\
\sup_{x\in K}\left|\partial_{x}^{\alpha}R\left(\eps\odot\phi(\eps,x),x\right)\right|=O\left(\eps^{m}\right).
\end{multline*}

\end{enumerate}
\end{defn}
\noindent There are three main problems in defining a set of indices
for $\gsd$: 
\begin{enumerate}[%
leftmargin=*,label=(\alph*),align=left %
]
\item \foreignlanguage{british}{\label{enu:1stProblG^d}Both the test functions
and the representatives are evaluated at $x\in\Omega$:}

\begin{itemize}
\item For $\gsd$ we have terms like: $\partial_{x}^{\alpha}R(\eps\odot\phi(\eps,x),x)$. 
\item For $\gss$, $\gse$ and $\hat{\gs}$ we have terms like: $\partial^{\alpha}u_{\eps}(x)$
(see Cor. \ref{cor:specialAlgebra}, Thm. \ref{thm:fullEquiv}, Cor.
\ref{cor:fullWithAsymptBigOh}, Cor. \ref{cor:NSAAsymptBigOh}). 
\end{itemize}
\item \label{enu:2ndProblG^d}In the definition of moderate representatives
for $\gsd$, test objects are taken in $\mathcal{C}_{\brm}^{\infty}((0,1]\times\Omega,\mathcal{A}_{0})$:
compare Def. \ref{def:fullCA} and Def. \ref{def:diffInvCA}. 
\item \label{enu:3rdProblG^d}The third problem is tied to the dependence
of $\phi\in\mathcal{C}_{\brm}^{\infty}((0,1]\times\Omega,\mathcal{A}_{q})$
on $\eps\in(0,1]$, and the use of the mollification 
\begin{equation}
\eps\odot\phi(\eps,x).\label{eq:mollifWithEps}
\end{equation}
 Of course, this is very different from the analogous $\eps\odot\phi$,
with $\phi\in\mathcal{A}_{q}$, used for $\gse$. For example, using
\eqref{eq:mollifWithEps} we cannot say that $\text{supp}(\eps\odot\phi)=\eps\cdot\text{supp}(\phi)$
and so $\underline{\eps\odot\phi}=\eps\cdot\underline{\phi}$. 
\end{enumerate}
Problem \ref{enu:1stProblG^d} is solved considering an isomorphic
version of $\mathcal{C}_{\brm}^{\infty}((0,1]\times\Omega,\mathcal{A}_{0})$:
\begin{defn}
\label{def:isoC_b}Let $q\in\N$, then we say $\phi\in\mathcal{C}_{\brm}^{\infty}((0,1],\mathcal{C}^{\infty}(\Omega,\mathcal{A}_{q}))$
if and only if the following conditions are satified: 
\begin{enumerate}[%
leftmargin=*,label=(\roman*),align=left %
]
\item \foreignlanguage{british}{$\phi\in\mathcal{C}^{\infty}((0,1],\mathcal{C}^{\infty}(\Omega,\mathcal{A}_{q}))$.
We use the notation $\phi=(\phi_{\eps})$ for this type of maps. }
\item $\forall K\Subset\Omega\,\exists B\subseteq\R^{n}\text{ bounded}\,\forall\alpha\in\N^{n}\,\forall\eps\in(0,1]\,\forall x\in K:\ \text{supp}\left[\partial^{\alpha}\phi_{\eps}(x)\right]\subseteq B$. 
\end{enumerate}
\end{defn}
\noindent We have the isomorphism 
\[
\mathcal{C}_{\brm}^{\infty}((0,1]\times\Omega,\mathcal{A}_{q})\simeq\mathcal{C}_{\brm}^{\infty}((0,1],\mathcal{C}^{\infty}(\Omega,\mathcal{A}_{q}))
\]
in the category of diffeological spaces when both spaces are viewed
as subspaces of the corresponding functional spaces $\mathcal{C}^{\infty}((0,1]\times\Omega,\mathcal{A}_{q})$
and $\mathcal{C}^{\infty}((0,1],\mathcal{C}^{\infty}(\Omega,\mathcal{A}_{q}))$
(see \cite{GiWu14} for the definition of diffeology on $\mathcal{A}_{q}$).

\noindent Problem \ref{enu:2ndProblG^d} is solved considering the
asymptotic relation $O_{\mathcal{J}}$ generated by $\mathcal{J}=\left\{ \mathcal{C}_{\brm}^{\infty}((0,1]\times\Omega,\mathcal{A}_{0})\right\} $.
Actually, this is the first time we really need the asymptotic relation
$O_{\mathcal{J}}$ with $\mathcal{J}\subset\mathcal{I}$. This implies
that we need to use $O_{\mathcal{J}}$ for the moderateness condition
and $O_{\mathcal{I}}$ for the negligibility condition, i.e.\ we
use two different asymptotic relations.

Problem \ref{enu:3rdProblG^d} is solved by keeping the information
of the fixed test function $\phi\in\mathcal{C}_{\brm}^{\infty}((0,1]\times\Omega,\mathcal{A}_{q})$
``in the index'', i.e.\ by considering indices of the form $\eps=(r,\phi)$.
\begin{defn}
\label{def:SoIG^d}\ 
\begin{enumerate}[%
leftmargin=*,label=(\roman*),align=left %
]
\item \foreignlanguage{british}{$I_{q}^{\drm}(\Omega):=(0,1]\times\mathcal{C}_{\brm}^{\infty}((0,1],\mathcal{C}^{\infty}(\Omega,\mathcal{A}_{q}))$
for all $q\in\N$.} 
\item $I^{\drm}(\Omega):=I_{0}^{\drm}(\Omega)$
\item $\mathcal{I}^{\drm}(\Omega):=\left\{ I_{q}^{\drm}(\Omega)\mid q\in\N\right\} $
\item \label{blah}For $\eps=(r,\phi)$, $e=(s,\psi)\in I^{\drm}(\Omega)$,
define $\eps\le e$ if and only if $\phi=\psi$ and $r\le s$
\item $\mathbb{I}^{\drm}(\Omega):=(I^{\drm}(\Omega),\le,\mathcal{I}^{\drm}(\Omega))$
\item $\forall\eps=(r,\phi)\in I^{\drm}(\Omega)\:\forall x\in\Omega:\ \text{ev}_{\eps}(x):=(r\odot\phi_{r}(x),x)$.
Therefore $\text{ev}_{\eps}\in\Coo(\Omega,U(\Omega))$.
\item \label{bluh}If $\eps=(r,\phi)\in I^{d}(\Omega)$, then $\underline{\eps}:=r$.
\item If $u\in\mathcal{E}^{C}(\Omega)$, then $u_{\eps}:=u\circ\text{ev}_{\eps}$
for all $\eps\in I^{\drm}(\Omega)$. Note that $u_{\eps}\in\Coo(\Omega,\R)$.
\end{enumerate}
\end{defn}
\begin{thm}
\label{thm:SoIG^d}$\mathbb{I}^{\drm}(\Omega)=(I^{\drm}(\Omega),\le,\mathcal{I}^{\drm}(\Omega))$
is a set of indices.\end{thm}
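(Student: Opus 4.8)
The plan is to verify directly the four defining conditions \ref{enu:DefSoI-preorder}--\ref{enu:DefSoI-DonwDir} of Def.~\ref{def:setOfIndices} for the triple $\mathbb{I}^{\drm}(\Omega)=(I^{\drm}(\Omega),\le,\mathcal{I}^{\drm}(\Omega))$. The guiding observation, which makes three of the four conditions almost immediate, is that the relation of Def.~\ref{def:SoIG^d}\ref{blah} compares two indices $\eps=(r,\phi)$ and $e=(s,\psi)$ only when their test-function components coincide, $\phi=\psi$, and then reduces to the usual order $r\le s$ on $(0,1]$. Thus $\le$ is \emph{fibrewise}: on each fibre $\{\phi\}\times(0,1]$ it is just the order of $(0,1]$, while indices with distinct test functions are incomparable.

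For \ref{enu:DefSoI-preorder} I would check reflexivity (from $\phi=\phi$ and $r\le r$) and transitivity (if $\phi=\psi$, $r\le s$ and $\psi=\chi$, $s\le t$, then $\phi=\chi$ and $r\le t$); in fact $\le$ is even a partial order. For \ref{enu:DefSoI-union}, the membership $I^{\drm}(\Omega)=I_0^{\drm}(\Omega)\in\mathcal{I}^{\drm}(\Omega)$ holds by Def.~\ref{def:SoIG^d}, while $\emptyset\notin\mathcal{I}^{\drm}(\Omega)$ follows once one exhibits a point of each $I_q^{\drm}(\Omega)$: choosing a fixed mollifier $\phi_0\in\mathcal{A}_q$ (these exist for every $q$ by the standard construction of compactly supported smooth functions of integral $1$ with vanishing moments of orders $1,\dots,q$), the constant map $\phi:\eps\mapsto(x\mapsto\phi_0)$ lies in $\mathcal{C}_{\brm}^{\infty}((0,1],\mathcal{C}^{\infty}(\Omega,\mathcal{A}_q))$ because its support is constant, hence bounded, so $(1,\phi)\in I_q^{\drm}(\Omega)$. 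For \ref{enu:DefSoI-intersection} I would use the inclusions $\mathcal{A}_{q'}\subseteq\mathcal{A}_q$ for $q'\ge q$ (more vanishing moments means fewer functions, and smoothness into the smaller space entails smoothness into the larger one), together with the fact that the boundedness requirement (ii) of Def.~\ref{def:isoC_b} does not involve $q$; this gives $I_{q'}^{\drm}(\Omega)\subseteq I_q^{\drm}(\Omega)$ whenever $q'\ge q$, so that for $A=I_p^{\drm}(\Omega)$ and $B=I_q^{\drm}(\Omega)$ the set $C:=I_{\max(p,q)}^{\drm}(\Omega)\in\mathcal{I}^{\drm}(\Omega)$ satisfies $C\subseteq A\cap B$.

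The only condition requiring a genuine argument is \ref{enu:DefSoI-DonwDir}, and here the fibrewise nature of $\le$ is decisive. Suppose $e\le a\in A\in\mathcal{I}^{\drm}(\Omega)$, write $A=I_q^{\drm}(\Omega)$, $a=(s_a,\psi)$ and $e=(s_e,\psi_e)$. From $e\le a$ I obtain $\psi_e=\psi$ and $s_e\le s_a$; in particular the test-function component of $e$ equals that of $a$, which belongs to $\mathcal{C}_{\brm}^{\infty}((0,1],\mathcal{C}^{\infty}(\Omega,\mathcal{A}_q))$ since $a\in A$. Consequently $(\emptyset,e]=\{(r,\psi)\mid 0<r\le s_e\}$ is entirely contained in $A$, so $A_{\le e}=(\emptyset,e]$. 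This set is order-isomorphic to $(0,s_e]$, hence nonempty and totally ordered, and downward directedness by $<$ follows exactly as for $\mathbb{I}^{\srm}$: given $b=(r_b,\psi),c=(r_c,\psi)\in A_{\le e}$, any choice of $r_d$ with $0<r_d<\min(r_b,r_c)$ yields $d:=(r_d,\psi)\in A_{\le e}$ with $d<b$ and $d<c$.

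The expected main obstacle is conceptual rather than computational: one has to notice that, because $\le$ links only indices with identical test functions, the interval $(\emptyset,e]$ collapses onto a single fibre and, moreover, the hypothesis $e\le a\in A$ automatically forces $(\emptyset,e]\subseteq A$, reducing \ref{enu:DefSoI-DonwDir} to the one-dimensional statement that $(0,s_e]$ is downward directed. The only input external to the order structure is the existence of mollifiers in $\mathcal{A}_q$, used for nonemptiness in \ref{enu:DefSoI-union}.
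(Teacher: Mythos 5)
Your proof is correct, and it is exactly the routine verification the paper has in mind: the paper's own proof consists of the single line ``this is a direct consequence of the definitions'' (the authors even call $\mathbb{I}^{\drm}(\Omega)$ a \emph{trivial} set of indices), so you have simply written out that direct check in full. Your fibrewise observation --- that $e\le a\in I_q^{\drm}(\Omega)$ forces $(\emptyset,e]$ into the single fibre $\{(r,\psi)\mid 0<r\le s_e\}\subseteq I_q^{\drm}(\Omega)$, reducing condition \ref{enu:DefSoI-DonwDir} to downward directedness of $(0,s_e]$ --- together with the nonemptiness of $\mathcal{A}_q$ for condition \ref{enu:DefSoI-union} and the inclusions $I_{q'}^{\drm}(\Omega)\subseteq I_q^{\drm}(\Omega)$ for $q'\ge q$ for condition \ref{enu:DefSoI-intersection}, is precisely what the definitions yield.
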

\begin{proof}
This is a direct consequence of the definitions.
\end{proof}
\noindent Because of \ref{blah} and \ref{bluh} one may call this
a \emph{trivial} set of indexes.
\begin{thm}
\label{thm:equivG^d}Let $u\in\mathcal{E}^{C}(\Omega)$, and $\mathcal{J}:=\left\{ I_{0}^{\drm}(\Omega)\right\} $,
then
\begin{enumerate}[%
leftmargin=*,label=(\roman*),align=left %
]
\item \foreignlanguage{british}{$u\in\mathcal{E}_{M}^{C}(\Omega)$ if and
only if 
\[
\forall K\Subset\Omega\,\forall\alpha\in\N^{n}\,\exists N\in\N:\ \sup_{x\in K}\left|\partial^{\alpha}u_{\eps}(x)\right|=O_{\mathcal{J}}\left(\underline{\eps}^{-N}\right)\text{ as }\eps\in\mathbb{I}^{\drm}
\]
}\lyxdeleted{}{Fri Jul  4 15:14:47 2014}{\foreignlanguage{british}{
}} 
\item If $u\in\mathcal{E}_{M}^{C}(\Omega)$, then $u\in\mathcal{N}^{C}(\Omega)$
if and only if 
\[
\forall K\Subset\Omega\,\forall\alpha\in\N^{n}\,\forall m\in\N:\ \sup_{x\in K}\left|\partial^{\alpha}u_{\eps}(x)\right|=O\left(\underline{\eps}^{m}\right)\text{ as }\eps\in\mathbb{I}^{\drm}
\]

\end{enumerate}
\end{thm}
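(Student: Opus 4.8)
The plan is to prove both biconditionals by unwinding the two big-O relations on $\mathbb{I}^{\drm}(\Omega)$ from Def.~\ref{def:2ndBigOh} and reducing each to an ordinary Landau big-O as $r\to0^{+}$, after which the surviving quantifier strings match Def.~\ref{def:diffInvCA} line by line through the isomorphism $\mathcal{C}_{\brm}^{\infty}((0,1]\times\Omega,\mathcal{A}_q)\simeq\mathcal{C}_{\brm}^{\infty}((0,1],\mathcal{C}^{\infty}(\Omega,\mathcal{A}_q))$. The computational core is the observation that, for every $q\in\N$ and every $a=(s,\psi)\in I_q^{\drm}(\Omega)$, the down-set $A_{\le a}$ relative to $A=I_q^{\drm}(\Omega)$ collapses to a real interval. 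Indeed, by Def.~\ref{def:SoIG^d}\,\ref{blah} the relation $\eps\le a$ fixes the second coordinate to $\psi$ and merely scales the first, and $(r,\psi)\in I_q^{\drm}(\Omega)$ whenever $(s,\psi)$ is (the net $\psi$ remains $\mathcal{A}_q$-valued); hence $A_{\le a}=(\emptyset,a]=\{(r,\psi)\mid 0<r\le s\}$, order-isomorphic to $(0,s]$ via $r\mapsto(r,\psi)$. Since $\underline{(r,\psi)}=r$ by Def.~\ref{def:SoIG^d}\,\ref{bluh} and $u_{(r,\psi)}(x)=u(r\odot\psi_r(x),x)$ by the definitions of $\text{ev}_\eps$ and $u_\eps$ in Def.~\ref{def:SoIG^d}, the relation $O_{a,A}(\underline{\eps}^{-N})$ becomes, on this interval, exactly $\sup_{x\in K}|\partial^{\alpha}_x u(r\odot\psi_r(x),x)|=O(r^{-N})$ as $r\to0^{+}$, a statement insensitive to the upper endpoint $s$.

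For part (i) I would unwind $O_{\mathcal{J}}$ with $\mathcal{J}=\{I_0^{\drm}(\Omega)\}$. The existential $\exists A\in\mathcal{J}$ is vacuous, pinning $A=I_0^{\drm}(\Omega)$, while $\forall a=(s,\psi)\in A$ reduces to a quantifier over $\psi\in\mathcal{C}_{\brm}^{\infty}((0,1],\mathcal{C}^{\infty}(\Omega,\mathcal{A}_0))$: for each $\psi$ the pair $(1,\psi)$ lies in $I_0^{\drm}(\Omega)$, and by the endpoint-independence just noted the whole family $\{(s,\psi)\mid s\in(0,1]\}$ yields one and the same $r\to0^{+}$ estimate. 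Thus the right-hand side of (i) reads $\forall K\,\forall\alpha\,\exists N\,\forall\psi:\ \sup_{x\in K}|\partial^{\alpha}_x u(r\odot\psi_r(x),x)|=O(r^{-N})$ as $r\to0^{+}$. Identifying $\psi=(\psi_\eps)$ with the corresponding $\phi\in\mathcal{C}_{\brm}^{\infty}((0,1]\times\Omega,\mathcal{A}_0)$ under the Cartesian-closedness isomorphism makes $u_{(r,\psi)}(x)=u(r\odot\psi_r(x),x)$ coincide with the $R(\eps\odot\phi(\eps,x),x)$ of the definition after renaming the real variable $\eps\mapsto r$, so this is precisely the moderateness clause of Def.~\ref{def:diffInvCA}.

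Part (ii) runs identically, now with the full $O=O_{\mathcal{I}^{\drm}}$ in place of $O_{\mathcal{J}}$. Here $\exists A\in\mathcal{I}^{\drm}(\Omega)$ produces exactly the existential $\exists q\in\N$ with $A=I_q^{\drm}(\Omega)$, and $\forall a\in I_q^{\drm}(\Omega)$ becomes $\forall\psi\in\mathcal{C}_{\brm}^{\infty}((0,1],\mathcal{C}^{\infty}(\Omega,\mathcal{A}_q))$. The same collapse of $O_{a,A}$ turns the right-hand side of (ii) into $\forall K\,\forall\alpha\,\forall m\,\exists q\,\forall\psi:\ \sup_{x\in K}|\partial^{\alpha}_x u(r\odot\psi_r(x),x)|=O(r^{m})$ as $r\to0^{+}$, which is the negligibility clause of Def.~\ref{def:diffInvCA}; the standing hypothesis $u\in\mathcal{E}_M^C(\Omega)$ supplies exactly the membership demanded there before the negligibility estimate is imposed.

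The argument involves no genuine analysis, so the main obstacle is bookkeeping: one must verify scrupulously that the interplay of ``$\exists A\in\mathcal{I}$'' and ``$\forall a\in A$'' in Def.~\ref{def:2ndBigOh} reproduces, respectively, the moment order $q$ and the universal quantifier over test-function nets in Def.~\ref{def:diffInvCA}, and that the collapse of $A_{\le a}$ onto $(0,s]$ is legitimate. The latter rests entirely on the ``trivial'' order on $\mathbb{I}^{\drm}(\Omega)$ recorded in Def.~\ref{def:SoIG^d}\,\ref{blah},\,\ref{bluh} and on $\mathbb{I}^{\drm}(\Omega)$ being a set of indices (Thm.~\ref{thm:SoIG^d}), which guarantees that each $A_{\le a}$ is nonempty and downward directed. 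A secondary point needing care is the passage through the diffeological isomorphism, i.e.\ checking that $u_\eps=u\circ\text{ev}_\eps$ with $\text{ev}_\eps(x)=(r\odot\phi_r(x),x)$ genuinely matches the expression $R(\eps\odot\phi(\eps,x),x)$ of Def.~\ref{def:diffInvCA}.
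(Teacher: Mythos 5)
Your proposal is correct and follows essentially the same route as the paper, whose proof consists of the single remark that everything is immediate from $u\left[\text{ev}_{(r,\varphi)}(x)\right]=u(r\odot\varphi_{r}(x),x)$ and $\underline{(r,\varphi)}=r$; you simply make explicit the bookkeeping the paper leaves implicit, namely the collapse of $A_{\le a}$ onto the ray $\{(r,\psi)\mid 0<r\le s\}\simeq(0,s]$, the resulting reduction of $O_{a,A}$ to an ordinary Landau big-O as $r\to0^{+}$, and the matching of quantifiers ($\mathcal{J}=\{I_{0}^{\drm}(\Omega)\}$ for moderateness, $\exists A=I_{q}^{\drm}(\Omega)$ giving $\exists q$ for negligibility) against Def.~\ref{def:diffInvCA} via the isomorphism $\mathcal{C}_{\brm}^{\infty}((0,1]\times\Omega,\mathcal{A}_{q})\simeq\mathcal{C}_{\brm}^{\infty}((0,1],\mathcal{C}^{\infty}(\Omega,\mathcal{A}_{q}))$. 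All steps check out, including the endpoint-independence of the $r\to0^{+}$ estimate and the standing hypothesis $u\in\mathcal{E}_{M}^{C}(\Omega)$ in part (ii).
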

\begin{proof}
This is immediate from $u\left[\text{ev}_{(r,\varphi)}(x)\right]=u(r\odot\varphi_{r}(x),x)$
and $\underline{(r,\varphi)}=r$.
\end{proof}
\noindent We finally remark that, similarly to the case of $\cG^{\drm}$,
one can treat the algebras $\cG^{2}$ and $\hat{\cG}$ of \cite{GKOS}.

\section{\label{sec:Application-point-values}Application: point values characterization
of generalized function}

In this section we want to show that our unified point of view can
effectively be used to generalize proofs which hold for the special
algebra.

We assume:
\begin{enumerate}[%
leftmargin=*,label=(\roman*),align=left %
]
\item $\mathbb{I}=(I,\le,\mathcal{I})$ is a set of indices. All the big-O
relations in this section have to be meant as $\eps\in\mathbb{I}$.
\item \label{enu:assumpForSeq}For each $a\in A\in\mathcal{I}$ there exists
a sequence $(z_{k})_{k}$ of $A_{\le a}$ such that $(z_{k})_{k}\to\emptyset$
in $A_{\le a}$. Moreover $\forall b,c\in A_{\le a}:\ b<c$ or $c\le b$.
\item $\mathcal{J}\subseteq\mathcal{I}$ is a non empty subset of $\mathcal{I}$
such that $\forall A,B\in\mathcal{J}\,\exists C\in\mathcal{J}:\ C\subseteq A\cap B$.
\item There is a map $I \to (0,1]$, $\eps \mapsto \underline{\eps}$ such that
\[
\exists A\in\mathcal{I}\,\forall a\in A:\ \lim_{\eps\in A_{\le a}}\underline{\eps}=0.
\]

\item Let a map $\eps \mapsto \Omega_\eps$ be given, where $\Omega_\eps$ is an open subset of $\R^n$ for each $\eps \in I$. Then we set $\mathcal{E}^\mathbb{I}(\Omega) := \{ u: \bigcup_{\eps \in I} \{ \eps \} \times \Omega_\eps \to \R\ |\ u(\eps, \cdot) \in C^\infty(\Omega_\eps, \R)\ \forall \eps \in I \}$. We write $u_\eps$ instead of $u(\eps, \cdot)$.
We furthermore have two subsets $\mathcal{E}^\mathbb{I}_{M}(\Omega)$, $\mathcal{N}^\mathbb{I}(\Omega)$ of $\mathcal{E}^\mathbb{I}(\Omega)$
characterized as follows:

\begin{enumerate}
\item $u\in\mathcal{E}_{M}^{\mathbb{I}}(\Omega)$ iff \foreignlanguage{british}{
\[
\forall K\Subset\Omega\,\forall\alpha\in\N^{n}\,\exists N\in\N:\ \sup_{x\in K}\left|\partial^{\alpha}u_{\eps}(x)\right|=O_{\mathcal{J}}\left(\underline{\eps}^{-N}\right)
\]
}
\item If $u\in\mathcal{E}_{M}^{\mathbb{I}}(\Omega)$, then $u\in\mathcal{N}^{\mathbb{I}}(\Omega)$
iff
\[
\forall K\Subset\Omega\,\forall m\in\N:\ \sup_{x\in K}\left|u_{\eps}(x)\right|=O\left(\underline{\eps}^{m}\right).
\]

\end{enumerate}
\end{enumerate}
%

For $\mathbb{I}$ we consider the cases $\mathbb{I}^{\srm}$, $\hat{\mathbb{I}}$ and $\mathbb{I}^{\erm}$ for which $\Omega_\eps$ is given by $\Omega$, $\Omega$ and $\{x \in \R^n\ |\ \supp \varphi + x \subseteq \Omega\}$, respectively. Then we have the following formulation of Thm. 1.2.3 of \cite{GKOS}.

\begin{thm}
\label{thm:1.2.3}If $\mathbb{I}\in\{\mathbb{I}^{\srm},\mbox{\ensuremath{\hat{\mathbb{I}}}},\mathbb{I}^{\erm}\}$
and $(u_{\eps})\in\mathcal{E}_{M}^{\mathbb{I}}(\Omega)$, then the
following are equivalent:
\begin{enumerate}[%
leftmargin=*,label=(\roman*),align=left %
]
\item \foreignlanguage{british}{$u\in\mathcal{N}^{\mathbb{I}}(\Omega)$}
\item $\forall K\Subset\Omega\,\forall\alpha\in\N^{n}\,\forall m\in\N:\ \sup_{x\in K}\left|\partial^{\alpha}u_{\eps}(x)\right|=O\left(\underline{\eps}^{m}\right)\text{ as }\eps\in\mathbb{I}$.
\end{enumerate}
\end{thm}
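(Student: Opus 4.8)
The plan is to split the claim into its two implications, with essentially all the work in $(i)\Rightarrow(ii)$. The implication $(ii)\Rightarrow(i)$ is immediate: specialising $(ii)$ to $\alpha=0$ gives $\sup_{x\in K}|u_\eps(x)|=O(\underline{\eps}^m)$ for all $K\Subset\Omega$ and all $m\in\N$, which is exactly the defining condition of $\mathcal{N}^{\mathbb{I}}(\Omega)$.

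For $(i)\Rightarrow(ii)$ I would argue by induction on the order $|\alpha|$, the guiding idea being the classical interpolation (Taylor/Landau--Kolmogorov) estimate bounding a first derivative by the function together with a second derivative. Concretely I prove the statement $P(k)$: \emph{for every $K\Subset\Omega$, every $\alpha$ with $|\alpha|=k$ and every $m\in\N$ one has $\sup_{x\in K}|\partial^\alpha u_\eps(x)|=O(\underline{\eps}^m)$}. The base case $P(0)$ is exactly hypothesis $(i)$. For the inductive step, write $\alpha=\beta+e_i$ with $|\beta|=k$ ($e_i$ the $i$-th unit multi-index), fix $K\Subset\Omega$ and $m$, and choose a compact $K'\Subset\Omega$ together with $\delta>0$ so that $\{x+te_i\mid x\in K,\ 0\le t\le\delta\}\subseteq K'$. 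The first-order Taylor expansion of $\partial^\beta u_\eps$ with Lagrange remainder then gives, for $0<h\le\delta$,
\[
\sup_{x\in K}|\partial^\alpha u_\eps(x)|\le\frac{2}{h}\,\sup_{x\in K'}|\partial^\beta u_\eps(x)|+\frac{h}{2}\,\sup_{x\in K'}|\partial^{\beta+2e_i}u_\eps(x)|.
\]

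The decisive step is an $\eps$-dependent choice of $h$ carried out entirely inside the big-O calculus of Thm.~\ref{thm:usualBigOh-prop} and Thm.~\ref{thm:2ndBigOh-prop}. First I note that moderateness, phrased with $O_{\mathcal{J}}$, implies the corresponding $O=O_{\mathcal{I}}$ statement, because $\mathcal{J}\subseteq\mathcal{I}$ and the same witnessing set $A$ works; hence $\sup_{K'}|\partial^{\beta+2e_i}u_\eps|=O(\underline{\eps}^{-N})$ for some $N$. The inductive hypothesis $P(k)$ gives $\sup_{K'}|\partial^\beta u_\eps|=O(\underline{\eps}^{M})$ for every $M$. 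Taking $h=h_\eps:=\delta\,\underline{\eps}^{\,m+N}$ (so $0<h_\eps\le\delta$ since $\underline{\eps}\in(0,1]$) and choosing $M=2m+N$, the two summands are $O(\underline{\eps}^{m})$ by the scalar and product rules (item \ref{enu:1stBigOh-prodExt} of Thm.~\ref{thm:usualBigOh-prop}); their sum is again $O(\underline{\eps}^m)$ by item \ref{enu:1stBigOh-sumEqualSummand}, and since $\sup_K|\partial^\alpha u_\eps|$ is dominated by this sum, transitivity (with constant $H=1$) yields $\sup_K|\partial^\alpha u_\eps|=O(\underline{\eps}^m)$, closing the induction.

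The main obstacle is bookkeeping rather than analysis: the three inputs $P(k)$, moderateness, and the desired conclusion each carry their own witnessing set in $\mathcal{I}$, so before applying the pointwise sum and product rules I would pass to a common $C\in\mathcal{I}$ with $C$ contained in the intersection of those sets, using condition \ref{enu:DefSoI-intersection} of Def.~\ref{def:setOfIndices}, and rewrite every estimate as $O_{c,C}$ for all $c\in C$ via property \ref{enu:1stBigOh-subsetA} of Thm.~\ref{thm:usualBigOh-prop}. A second, case-specific point concerns the domain: for $\mathbb{I}^{\srm}$ and $\hat{\mathbb{I}}$ one has $\Omega_\eps=\Omega$, whereas for $\mathbb{I}^{\erm}$ the set $\Omega_\eps=\{x\mid\supp\eps+x\subseteq\Omega\}$ varies with $\eps$; here I would observe that $K'\subseteq\Omega_\eps$ holds for all $\eps\le\eps_0$ once $\eps_0$ has sufficiently small support, which is precisely the ``eventually'' already present in the definition of $O_{a,A}$, so the Taylor inequality is available exactly on the range of indices that the big-O estimate sees.
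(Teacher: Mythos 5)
Your proof is correct and follows essentially the same route as the paper, whose proof simply invokes Thm.~1.2.3 of \cite{GKOS} with $\eps$ replaced by $\underline{\eps}$: your Taylor interpolation inequality with the choice $h_\eps=\delta\,\underline{\eps}^{\,m+N}$ is exactly the paper's displayed decomposition $\partial_i u_\eps(x)=\bigl(u_\eps(x+\underline{\eps}^{m+N}e_i)-u_\eps(x)\bigr)\underline{\eps}^{-m-N}-\tfrac{1}{2}\partial_i^2 u_\eps(x_\theta)\,\underline{\eps}^{m+N}$, and your induction on $|\alpha|$ is the structure of the cited classical argument. Your extra bookkeeping (passing to a common $C\in\mathcal{I}$ via condition (iii) of Def.~\ref{def:setOfIndices} and property \ref{enu:1stBigOh-subsetA} of Thm.~\ref{thm:usualBigOh-prop}, and the handling of the variable domains $\Omega_\eps$ for $\mathbb{I}^{\erm}$) merely makes explicit what the paper compresses into its remark that both big-O's are of the same type $O_{\mathcal{I}}$ for $\mathbb{I}\in\{\mathbb{I}^{\srm},\hat{\mathbb{I}},\mathbb{I}^{\erm}\}$.
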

\begin{proof}
The proof is only a reformulation of Thm. 1.2.3 of \cite{GKOS}, provided
we use $\underline{\eps}$ instead of $\eps$. We only note that in
one of the final steps we have
\[
\partial_{i}u_{\eps}(x)=\underbrace{\left(u_{\eps}(x+\underline{\eps}^{m+N}e_{i})-u_{\eps}(x)\right)}_{O(\underline{\eps}^{2m+N})}\underline{\eps}^{-m-N}-\underbrace{\frac{1}{2}\partial_{i}^{2}u_{\eps}(x_{\theta})}_{O(\underline{\eps}^{-N})}\underline{\eps}^{m+N}
\]
and these two big-O are both of the same type $O_{\mathcal{I}}$ if
$\mathbb{I}\in\{\mathbb{I}^{\srm},\mbox{\ensuremath{\hat{\mathbb{I}}}},\mathbb{I}^{\erm}\}$.
On the contrary, in the case of the diffeomorphism invariant algebras
$\mathcal{G}^{\drm}$, $\mathcal{G}^{2}$ the first big-O would be
$O_{\mathcal{I}}$, whereas the second one would be $O_{\mathcal{J}}$,
so the proof cannot be trivially generalized.
\end{proof}
\noindent We can now define
\begin{defn}
\label{def:Rtil}\ 
\begin{enumerate}[%
leftmargin=*,label=(\roman*),align=left %
]
\item $\mathcal{G}^{\mathbb{I}}(\Omega):=\mathcal{E}^\mathbb{I}_{M}(\Omega)/\mathcal{N}^\mathbb{I}(\Omega)$
\item $\Omega_{M}^{\mathbb{I}}:=\left\{ (x_{\eps})\in\Omega^{I}\mid\exists N\in\N:\ x_{\eps}=O_{\mathcal{J}}(\underline{\eps}^{-N})\right\} $
\item $(x_{\eps})\sim_{\mathbb{I}}(y_{\eps})$ iff $\forall m\in\N:\ x_{\eps}-y_{\eps}=O(\underline{\eps}^{m})$,
where $(x_{\eps})$, $(y_{\eps})\in\Omega_{M}^{\mathbb{I}}$
\item $\widetilde{\Omega}^{\mathbb{I}}:=\Omega_{M}^{\mathbb{I}}/\sim_{\mathbb{I}}$
\item If $\mathcal{P}(\eps)$ is a property of $\eps\in I$, then we write
\[
\forall^{\mathbb{I}}\eps:\ \mathcal{P}(\eps)
\]
iff $\exists A\in\mathcal{I}\,\forall a\in A\,\exists\eps_{0}\le a\,\forall\eps\in A_{\le\eps_{0}}:\ \mathcal{P}(\eps)$,
and we read it saying ``for $\eps\in\mathbb{I}$ sufficiently small
$\mathcal{P}(\eps)$ holds''.
\item $[x_{\eps}]\in\otilc^{\mathbb{I}}$ iff $ $$[x_{\eps}]\in\widetilde{\Omega}^{\mathbb{I}}$
and $\exists K\Subset\Omega\,\forall^{\mathbb{I}}\eps:\ x_{\eps}\in K$
\item If $u=[u_{\eps}]\in\mathcal{G}^{\mathbb{I}}(\Omega)$ and $ $$x\in\otilc^{\mathbb{I}}$,
then $u(x):=[u_{\eps}(x_{\eps})]$.
\end{enumerate}
\end{defn}
\noindent The following theorem is a simple generalization of Prop.
1.2.45 and Thm. 1.2.46 of \cite{GKOS} by applying assumption \ref{enu:assumpForSeq},
Lem. \ref{lem:fromSeqToDecresSeq} and Thm. \ref{thm:neg-1stBigOh}:
\begin{thm}
\label{thm:pointValuesChar}Let $u\in\mathcal{G}^{\mathbb{I}}(\Omega)$,
then:
\begin{enumerate}[%
leftmargin=*,label=(\roman*),align=left %
]
\item If $x\in\otilc^{\mathbb{I}}$, then $u(x)$ is a well-defined element
of $\Rtil^{\mathbb{I}}$.
\item $u=0$ in $\mathcal{G}^{\mathbb{I}}(\Omega)$ iff $u(x)=0$ in $\Rtil^{\mathbb{I}}$
for all $x\in\otilc^{\mathbb{I}}$.
\end{enumerate}
\end{thm}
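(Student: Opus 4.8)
The plan is to transcribe the classical point--value argument (Prop.\ 1.2.45 and Thm.\ 1.2.46 of \cite{GKOS}) into the present language, systematically replacing $\eps$ by $\underline{\eps}$ and the ordinary Landau symbol by $O_{\mathcal{J}}$ for moderateness and by $O=O_{\mathcal{I}}$ for negligibility, and checking at each step that the needed formal manipulation is licensed by Thm.\ \ref{thm:usualBigOh-prop} and Thm.\ \ref{thm:2ndBigOh-prop} together with the implication $O_{\mathcal{J}}\Rightarrow O$ furnished by $\mathcal{J}\subseteq\mathcal{I}$.

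For assertion (i) I would verify three things. That $u(x)\in\Rtil^{\mathbb{I}}$: since $x\in\otilc^{\mathbb{I}}$ there is $K\Subset\Omega$ with $x_{\eps}\in K$ for $\eps\in\mathbb{I}$ sufficiently small, while $u\in\mathcal{E}_{M}^{\mathbb{I}}(\Omega)$ gives $N$ with $\sup_{y\in K}|u_{\eps}(y)|=O_{\mathcal{J}}(\underline{\eps}^{-N})$; intersecting the two ``eventually'' statements via condition \ref{enu:DefSoI-intersection} of Def.\ \ref{def:setOfIndices} yields $|u_{\eps}(x_{\eps})|=O_{\mathcal{J}}(\underline{\eps}^{-N})$, so the value net is moderate. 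Independence of the representative of $u$: if $(u_{\eps})$ is negligible then $|u_{\eps}(x_{\eps})|\le\sup_{K}|u_{\eps}|=O(\underline{\eps}^{m})$ for every $m$, so the class is unchanged. Independence of the representative of $x$: if $x_{\eps}\sim_{\mathbb{I}}x'_{\eps}$, the mean value theorem expresses $u_{\eps}(x_{\eps})-u_{\eps}(x'_{\eps})$ as the inner product of a first derivative of $u_{\eps}$ at an intermediate point of the segment $[x_{\eps},x'_{\eps}]$ (which lies in a slightly larger compact $K'\Subset\Omega$ once $|x_{\eps}-x'_{\eps}|$ is small) with $x_{\eps}-x'_{\eps}$; the first factor is $O_{\mathcal{J}}(\underline{\eps}^{-N'})$ by moderateness on $K'$, the second is $O(\underline{\eps}^{m+N'})$ for every $m$, and the product rule, after downgrading $O_{\mathcal{J}}$ to $O$, gives $O(\underline{\eps}^{m})$ for every $m$.

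For assertion (ii) the implication $u=0\Rightarrow u(x)=0$ is immediate, since negligibility of $(u_{\eps})$ and $x_{\eps}\in K$ eventually give $|u_{\eps}(x_{\eps})|\le\sup_{K}|u_{\eps}|=O(\underline{\eps}^{m})$ for all $m$. For the converse I would argue by contraposition: if $u\ne 0$ then, by the zeroth--order definition of $\mathcal{N}^{\mathbb{I}}(\Omega)$, there are $K\Subset\Omega$ and $m$ with $\neg[\sup_{y\in K}|u_{\eps}(y)|=O(\underline{\eps}^{m})]$. Putting $v_{\eps}:=\sup_{y\in K}|u_{\eps}(y)|$, which is attained at some $x_{\eps}\in K$ by compactness and continuity, the net $(x_{\eps})_{\eps\in I}\in K^{I}$ is bounded, hence $x:=[x_{\eps}]\in\otilc^{\mathbb{I}}$, and $|u_{\eps}(x_{\eps})|=v_{\eps}$ is not $O(\underline{\eps}^{m})$, so $u(x)\ne 0$ in $\Rtil^{\mathbb{I}}$, contradicting the hypothesis. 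To make the negation $\neg[v_{\eps}=O(\underline{\eps}^{m})]$ usable in the style of \cite{GKOS} one unfolds $O=O_{\mathcal{I}}$ (its negation being $\forall A\in\mathcal{I}\,\exists a\in A:\neg[v_{\eps}=O_{a,A}(\underline{\eps}^{m})]$) and applies Thm.\ \ref{thm:neg-1stBigOh}, using the sequence and the totality supplied by assumption \ref{enu:assumpForSeq} and, for a monotone witness, Lem.\ \ref{lem:fromSeqToDecresSeq}, to produce for each $H$ a sequence $(\eps_{k})\to\emptyset$ along which $v_{\eps_{k}}>H\underline{\eps_{k}}^{m}$; reading the same theorem backwards then certifies non--negligibility.

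The main obstacle I anticipate is the bookkeeping that keeps the $O_{\mathcal{J}}$ of moderateness and the $O=O_{\mathcal{I}}$ of negligibility strictly apart. The only place where their mismatch really bites is the moderateness of $u(x)$ in (i): this estimate must be produced in $O_{\mathcal{J}}$, whereas the hypothesis $x\in\otilc^{\mathbb{I}}$ supplies ``$x_{\eps}\in K$ eventually'' only as an $O=O_{\mathcal{I}}$--type statement. This is harmless when $\mathcal{J}=\mathcal{I}$, which is exactly the situation for $\mathbb{I}^{\srm}$, $\hat{\mathbb{I}}$ and $\mathbb{I}^{\erm}$ (there the two relations coincide and the filter--base intersection matches the index sets directly), but it is precisely the point at which the argument fails to extend to $\mathcal{G}^{\drm}$ and $\mathcal{G}^{2}$, where $\mathcal{J}\subsetneq\mathcal{I}$, as already observed in the proof of Thm.\ \ref{thm:1.2.3}. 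Once $O_{\mathcal{J}}\Rightarrow O$ is used to bring all estimates to a common type, the remaining work is the routine product, sum and transitivity algebra of Thm.\ \ref{thm:usualBigOh-prop} and Thm.\ \ref{thm:2ndBigOh-prop}.
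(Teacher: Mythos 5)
Your proposal is correct and takes essentially the same approach as the paper, whose proof is exactly the one-line recipe you executed: transcribe Prop.\ 1.2.45 and Thm.\ 1.2.46 of \cite{GKOS} with $\underline{\eps}$ in place of $\eps$, invoking assumption \ref{enu:assumpForSeq}, Lem.\ \ref{lem:fromSeqToDecresSeq} and Thm.\ \ref{thm:neg-1stBigOh} to handle the negated big-O in the converse direction of (ii). Your closing observation that the $O_{\mathcal{J}}$/$O_{\mathcal{I}}$ mismatch in the moderateness estimate of (i) confines the argument to the cases $\mathcal{J}=\mathcal{I}$ (i.e.\ $\mathbb{I}^{\srm}$, $\hat{\mathbb{I}}$, $\mathbb{I}^{\erm}$, excluding $\gsd$ and $\gs^{2}$) is precisely the caveat the paper itself records in the proof of Thm.\ \ref{thm:1.2.3}.
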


\section{Conclusions}

The notions we introduced in this article are helpful for a unified
presentation of Colombeau algebras and highlight the conceptual analogies
between several Colombeau algebras. There are three ways to work with
the notions we introduced in this article. The first one is to work
in a generic set of indices. The second one is to work in the Colombeau
algebra we are interested in, but using the particular set of indices
that simplifies its definition. The third one has been presented in
section \ref{sec:Application-point-values}: we assume those properties
that hold in all the cases we are interested in. In our opinion, the
first one is the hardest because it introduces a great level of abstraction.
Of course, the results obtained using this abstract method are more
general because they apply to several different Colombeau type algebras.
At the same time, frequently these results are almost trivial generalizations
of analogous results already known for the special algebra.

\end{document}